   \def\cB{{\mathcal B}}   
   \def\cE{{\mathcal E}}   
\def\cG{{\mathcal G}}   \def\cH{{\mathcal H}}   
   \def\cN{{\mathcal N}}
\def\cV{{\mathcal V}}
\def\cal H{{\mathcal H}}
\def\R{\mathbb{R}}
\def\C{\mathbb{C}}
\def\N{\mathbb{N}}
\def\ran{{\text{\rm ran\,}}}
\def\dom{{\text{\rm dom\,}}}
\def\phi{\varphi}
\def\dd{\textup{d}}
\DeclareMathOperator{\spann}{span}
\renewcommand{\theta}{\vartheta}
\newtheorem{theorem}{Theorem}[section]
\newtheorem*{thm*}{Theorem}
\newtheorem{proposition}[theorem]{Proposition}
\newtheorem{corollary}[theorem]{Corollary}
\newtheorem{lemma}[theorem]{Lemma}
\theoremstyle{definition}
\newtheorem{definition}[theorem]{Definition}
\newtheorem{example}[theorem]{Example}
\newtheorem{hypothesis}[theorem]{Hypothesis}
\newtheorem{remark}[theorem]{Remark}
\newtheorem*{ack}{Acknowledgement}
\numberwithin{equation}{section}
\title[The Krein--von Neumann extension for metric graphs]{The Krein--von Neumann extension for Schr\"odinger operators on metric graphs}
\author[J.~Muller]{Jacob Muller}
\email{muller@math.su.se}
\author[J.~Rohleder]{Jonathan Rohleder}
\email{jonathan.rohleder@math.su.se}
\address{Matematiska institutionen\\ Stockholms universitet \\ 106 91 Stockholm \\
Sweden}
\dedicatory{Dedicated with great pleasure to Henk de Snoo on the occasion of his 75th birthday}
\begin{document}

\begin{abstract}
The Krein--von Neumann extension is studied for Schr\"odinger operators on metric graphs. Among other things, its vertex conditions are expressed explicitly, and its relation to other self-adjoint vertex conditions (e.g.\ continuity-Kirchhoff) is explored. A variational characterisation for its positive eigenvalues is obtained. Based on this, the behaviour of its eigenvalues under perturbations of the metric graph is investigated, and so-called surgery principles are established. Moreover, isoperimetric eigenvalue inequalities are obtained.
\end{abstract}

\maketitle

\section{Introduction}

It is an almost hundred-year-old story that many of the differential operators appearing in mathematical physics and their boundary conditions can be described conveniently in the framework of extension theory of symmetric operators. A complete description of all self-adjoint extensions of a symmetric operator was first given by von Neumann \cite{N30}. On the other hand, it turned out that a theory of self-adjoint extensions of symmetric operators that are semibounded from below can be done conveniently by means of semibounded sesquilinear forms; this originates from the work of Friedrichs \cite{F34}. However, it is due to Krein \cite{K47} (see also the works of Vishik \cite{V52} and Birman \cite{B56}) that among all non-negative extensions of a positive definite symmetric operator $S$, there are two extremal ones, the Friedrichs extension $S_{\rm F}$ and the (by now so-called) Krein--von Neumann extension $S_{\rm K}$, in the sense that each non-negative self-adjoint extension $A$ of $S$ satisfies
\begin{align*}
 S_{\rm K} \leq A \leq S_{\rm F}.
\end{align*}
These inequalities may be understood in the sense of quadratic forms or via the involved operators' resolvents. It is beyond the scope of this article to provide a complete historical review of the developments related to the Krein--von Neumann extension; for further reading we refer the reader to \cite{AN70} and the survey articles \cite{AS80,AGMST13}. Among the abstract advancements on extremal extensions of positive definite symmetric operators (and, more generally, symmetric linear relations), we mention \cite{AHSS02,AT02,AT03,CS78,HMS04,M92,PS96,SS03,S96,T81}.

In the study of e.g.\ elliptic second order differential operators on Euclidean domains, the Friedrichs extension is a very natural object; for instance, for the minimal symmetric Laplacian on a bounded domain in $\R^n$ corresponding to both Dirichlet and Neumann boundary conditions, the Friedrichs extension is the self-adjoint Laplacian subject to Dirichlet boundary conditions. On the other hand, in the same setting, the Krein--von Neumann extension corresponds to certain non-local boundary conditions which can be described in terms of the associated Dirichlet-to-Neumann map; for properties of the Krein--von Neumann extension of elliptic differential operators and recent related developments, we refer the reader to \cite{AK11,AGMT10,BGMM16,G83,G12,M94}.

For differential operators on metric graphs, which we consider in the present paper, the situation is similar, yet different in some respects. If $\Gamma$ is a finite metric graph, then we take, as a starting point, the (negative) Laplacian (i.e.\ the negative second derivative operator on each edge) $S$ in $L^2 (\Gamma)$, which satisfies on each vertex both Dirichlet and Kirchhoff vertex conditions; that is, the functions in the domain of $S$ vanish and have derivatives which sum up to zero at each vertex. This symmetric operator is very natural to carry out extension theory, since its adjoint $S^*$ is the Laplacian on $\Gamma$ with continuity as its (only) vertex conditions. Therefore any self-adjoint extension of $S$ in $L^2 (\Gamma)$ (which, at the same time is a restriction of $S^*$) satisfies continuity conditions and thus reflects, at least to some extent, the connectivity of the graph. Nevertheless, the Friedrichs extension of $S$ in this setting is the Laplacian on functions which are zero at every single vertex, an operator which, despite continuity, is determined by the graph's edges considered as separate intervals, instead of the actual graph structure. Other self-adjoint extensions of $S$ which are more suitable for the spectral analysis of network structures are the operator with continuity-Kirchhoff conditions (the so-called standard or natural Laplacian) or with $\delta$-type vertex conditions; the latter we will not discuss here further. 

The Krein--von Neumann extension for a Laplacian on a metric graph has not been considered much in the literature so far; an attempt for a symmetric operator with vertex conditions different from the ones considered here was done in \cite{M15}. The Krein--von Neumann extension of our operator $S$ is, like for the minimal Laplacian on a Euclidean domain, an operator with non-local vertex conditions. Nevertheless, its domain is intimately connected to the structure of the underlying graph. In fact, we prove that the matrix that couples the values and the sums of derivatives at the vertices for functions in the domain of $S_{\rm K}$ is exactly the weighted discrete Laplacian on the underlying discrete graph, where the weights are the inverse edge lengths. 

Our main focus in the present paper is on spectral properties of the operator $S_{\rm K}$, not only in the case of the Laplacian, but also for Schr\"odinger operators with nonnegative potentials $q_e$ on the edges. Namely, we consider the operator $S$ acting as $- \frac{\dd^2}{\dd x^2} + q_e$ on each edge $e$ of $\Gamma$, with Dirichlet and Kirchhoff vertex conditions as described above in the case of the Laplacian. Its Krein--von Neumann extension, the so-called {\em perturbed Krein Laplacian}, denoted by $- \Delta_{{\rm K}, \Gamma, q}$, is the main object of consideration in this article. We first describe the domain of $- \Delta_{{\rm K}, \Gamma, q}$ in terms of vertex conditions and establish Krein-type formulae for the resolvent differences with both the Friedrichs extension (the Schr\"odinger operator with Dirichlet vertex conditions) and the the Schr\"odinger operator $- \Delta_{{\rm st}, \Gamma, q}$ with standard vertex conditions. As a consequence, we obtain the formula
\begin{align*}
 \dim \ran \Big[ \big(- \Delta_{{\rm K}, \Gamma, q} - \lambda \big)^{-1} & - \big(- \Delta_{{\rm st}, \Gamma, q} - \lambda \big)^{-1} \Big] = \begin{cases} V - 1 & \text{if}~q = 0~\text{identically}, \\ V, & \text{else}, \end{cases}
\end{align*}
in which $V$ denotes the number of vertices of $\Gamma$ and $\lambda$ takes appropriate complex values. This formula distinguishes the potential-free case clearly from the case influenced by a potential. It also sheds light on another interesting phenomenon: the Krein Laplacian may, in some rare occasions, coincide with the standard Laplacian, and this is the case if and only if $\Gamma$ has only one vertex (with possibly many loops attached to it) and thus is a so-called flower graph. Moreover, we use the Krein-type resolvent formulae to obtain some results on spectral asymptotics of the perturbed Krein Laplacian. 

A further property of the perturbed Krein Laplacian on a metric graph $\Gamma$, which we establish, is the possibility to describe its positive eigenvalues variationally. In fact, the spectrum of $- \Delta_{{\rm K}, \Gamma, q}$ is purely discrete, and the lowest eigenvalue is always zero, with multiplicity equal to $V$, the number of vertices, as we show. All its positive eigenvalues $\lambda_j^+ (- \Delta_{{\rm K}, \Gamma, q})$, ordered nondecreasingly and counted with multiplicities, can be characterised by the variational principle
\begin{align}\label{eq:minMaxIntro}
 \lambda_j^+ \big( - \Delta_{{\rm K}, \Gamma, q} \big) = \min_{\substack{F \subset \widetilde H_0^2 (\Gamma) \\ \dim F = j}} \max_{\substack{f \in F \\ f \neq 0}} \frac{\int_\Gamma \left|- f'' + q f\right|^2 \dd x}{\int_\Gamma |f'|^2 \dd x + \int_\Gamma q |f|^2 \dd x};
\end{align}
here, $\widetilde H^2_0 (\Gamma)$ is the second-order Sobolev space on each edge, equipped with Dirichlet and Kirchhoff conditions at every vertex (see \eqref{eq:SV}). This formula is the exact counterpart of a variational description of the positive eigenvalues of the perturbed Krein Laplacian on a domain in $\R^n$, which was established in \cite[Proposition 7.5]{AGMT10}. Before we derive~\eqref{eq:minMaxIntro}, we first establish an abstract version of this principle; see Theorem \ref{thm:minMaxKrein}. Its proof is along the lines of the result for the Laplacian in \cite{AGMT10}; however, we found it useful and of independent interest to have it at hand also abstractly for the Krein--von Neumann extension of any symmetric, positive definite operator $S$ for which $\dom S$ equipped with the graph norm of $S$ satisfies a compactness condition. As a consequence of the formulation for graphs \eqref{eq:minMaxIntro}, we easily obtain inequalities between the (positive) eigenvalues of the perturbed Krein Laplacian and other self-adjoint extensions of $S$. 

An important field of application of the eigenvalue characterisation \eqref{eq:minMaxIntro} are so-called surgery principles. Such principles study the influence of geometric perturbations of a metric graph on the spectra of associated Laplacians or more general differential operators. The reader may think of sugery operations such as joining two vertices into one or cutting through a vertex, or adding or removing edges (or even entire subgraphs). Such principles were studied in depth for the Laplacian or Schr\"odinger operators subject to standard (and some other local) vertex conditions; see \cite{BKKM19,KKT16,KKMM16,KN14,RS20}. As we point out, the eigenvalues of the perturbed Krein Laplacian behave in some respects in the same way as the eigenvalues of $- \Delta_{{\rm st}, \Gamma, q}$; for instance, when gluing vertices all eigenvalues increase (or stay the same), and adding pendant edges or graphs (a process which increases the ``volume'' of $\Gamma$) may only decrease the eigenvalues. On the other hand, in some respects the behaviour is different from what we are used to for standard vertex conditions. Let us only mention three examples: firstly, for the positive eigenvalues, gluing vertices has actually a non-increasing effect (but at the same time also the multiplicity of the eigenvalue 0 decreases), whilst for standard vertex conditions, the positive eigenvalues behave non-decreasingly and the dimension of the kernel remains the same. Secondly, removing a vertex of degree two (replacing the two incident edges by one) may change eigenvalues in a monotonous way, whilst it does not have any influence on the spectrum of an operator with standard vertex conditions. Thirdly, inserting an edge between two existing vertices makes all eigenvalues decrease (or stay the same); for standard vertex conditions, this is not necessarily the case; see e.g. \cite{KMN13}.

A typical application of surgery principles for graph eigenvalues consists of deriving spectral inequalities in terms of geometric and topological parameters of the graph such as its total length, diameter, number of edges or vertices, or its first Betti number (or Euler characteristics, equivalently). For a few recent advances on spectral inequalities for quantum graphs, we refer to \cite{BL17,BKKM17,K20,KN19,MP20,P20}. To demonstrate how surgery principles for the perturbed Krein Laplacian on a metric graph may be applied, we establish lower bounds for the positive eigenvalues, in terms of eigenvalues of a loop graph or edge lengths. For instance, for the first positive eigenvalue of the Krein Laplacian without potential the lower bound is explicit,
\begin{align*}
 \lambda_1^+ (- \Delta_{\rm K, \Gamma}) \geq 4 \left( \frac{\pi}{\ell (\Gamma)} \right)^2,
\end{align*}
where $\ell (\Gamma)$ denotes the total length of $\Gamma$, and we specify the class of graphs for which this estimate is optimal.

Considering the Krein--von Neumann (and other) extensions of a Schr\"odinger operator with Dirichlet and Kirchhoff vertex conditions at all vertices is natural, as we pointed out above. However, it may also be useful to study extensions of a symmetric Schr\"odinger operator with different vertex conditions. We mention, as an example, the Laplacian with both Dirichlet and Neumann (Kirchhoff) vertex conditions at the ``loose ends'', i.e.\ the vertices of degree one, but standard vertex conditions at all interior vertices. In this case, the vertex conditions of the Krein--von Neumann extension will still be standard at all interior vertices, but they will couple the vertices of degree one in a nonlocal way. We conclude our paper with a short section where we discuss such situations.

Let us briefly describe how this paper is organised. In Section \ref{sec:abstractKrein}, we review some background on the abstract Krein--von Neumann extension. Moreover, we provide a proof of the abstract counterpart of the variational principle \eqref{eq:minMaxIntro} and derive a few easy consequences. Additionally, we study some basic properties of boundary triples, which we use as a tool. The aim of Section \ref{sec:Krein} is to introduce the perturbed Krein Laplacian on a metric graph and to study its properties, such as a description of its domain, Krein-type resolvent formulae and some consequences of the min-max principle. Section \ref{sec:surgery} is devoted to a collection of surgery principles, whilst in Section \ref{sec:isoperimetric}, we apply some of them in order to obtain some isoperimetric inequalities. Finally, Section \ref{sec:moreGeneral} deals with the more general setting where self-adjoint vertex conditions are fixed at some vertices, and extension theory is applied with respect to the remaining vertices.

\section{The abstract Krein--von Neumann extension and its eigenvalues}\label{sec:abstractKrein}

\subsection{Preliminaries}

Throughout this section we assume that $\cH$ is a separable complex Hilbert space with inner product $(\cdot, \cdot)$ and corresponding norm $\| \cdot \|$. For any closed linear operator $A$ in $\cH$, we denote by $\sigma (A)$ and $\rho (A)$ its spectrum and resolvent set respectively. If $A$ is self-adjoint and has a purely discrete spectrum bounded from below, then we write
\begin{align*}
 \lambda_1 (A) \leq \lambda_2 (A) \leq \dots
\end{align*}
for its eigenvalues, counted according to their multiplicities. If $\cG$ is a further Hilbert space, we denote by $\cB (\cG, \cH)$ the space of all bounded, everywhere-defined linear operators from $\cG$ to $\cH$ and abbreviate $\cB (\cG) := \cB (\cG, \cG)$.

We make the following assumption.

\begin{hypothesis}\label{hyp:abs}
The operator $S : \cH \supset \dom S \to \cH$ is closed and symmetric with dense domain $\dom S$. Furthermore, $S$ has a positive lower bound, i.e.\ there exists $\mu > 0$ such that
\begin{align}\label{eq:mu}
 (S f, f) \geq \mu \|f\|^2, \qquad f \in \dom S.
\end{align}
\end{hypothesis}

Under Hypothesis \ref{hyp:abs}, the defect numbers $(n_-, n_+)$ of $S$ satisfy $n_- = n_+ = \dim \ker S^*$, where $S^*$ denotes the adjoint of $S$.
Moreover, it follows directly from \eqref{eq:mu} that $\dom S \cap \ker S^* = \{0\}$, and the Krein--von Neumann extension of $S$ can be defined as follows. 

\begin{definition}
The {\em Krein--von Neumann extension} of $S$ is the operator $S_{\rm K}$ in $\cH$ given by
\begin{align}\label{eq:AK}
 S_{\rm K} f = S^* f, \qquad \dom S_{\rm K} = \dom S \dotplus \ker S^*.
\end{align}
\end{definition}

It is well-known that $S_{\rm K}$ is self-adjoint and is the smallest non-negative self-adjoint extension of $S$ in the sense of quadratic forms. Its counterpart, the Friedrichs extension of $S$, is the largest non-negative extension of $S$ and we denote it by $S_{\rm F}$. It can be defined via completion of the quadratic form induced by $S$; we do not go into the details but refer the reader to, e.g. the discussion in \cite[Chapter~VI]{K95}. For any self-adjoint, non-negative extension $A$ of $S$, the relation
\begin{align*}
 \big( (S_{\rm F} - \lambda)^{-1} f, f \big) \leq \big( (A - \lambda)^{-1} f, f \big) \leq \big( (S_{\rm K} - \lambda)^{-1} f, f \big), \quad f \in \cH,
\end{align*}
holds for each $\lambda < 0$. The spectrum of the Friedrichs extension has a strictly positive lower bound; in fact, $\min \sigma (S_{\rm F})$ coincides with the supremum over all $\mu$ such that \eqref{eq:mu} holds. Conversely, the Krein--von Neumann extension $S_{\rm K}$ has the point 0 as the bottom of its spectrum, and the corresponding eigenspace is given by
\begin{align*}
 \ker S_{\rm K} = \ker S^*,
\end{align*}
which follows from the definition of $S_{\rm K}$ and the fact that 0 is not an eigenvalue of $S$. In particular, $\dim \ker S_{\rm K} = n_- = n_+$, the defect number of $S$. We refer the reader to, e.g.\ the survey \cite{AGMST13} for a more detailed discussion of the Krein--von Neumann extension.

\subsection{A variational characterisation of the positive eigenvalues of the Krein--von Neumann extension}

The main goal of this subsection is to provide an abstract variational description of the eigenvalues different from 0 of the Krein--von Neumann extension. The credits for the arguments that lead to the min-max principle in Theorem~\ref{thm:minMaxKrein} below go to the articles \cite{AGMST10,AGMST13,AGMT10}, where the abstract Krein--von Neumann extension and the perturbed Krein Laplacian on domains in $\R^n$ were studied. There, the min-max principle is stated in the context of the application, so for the convenience of the reader we state and prove this variational principle here abstractly.

Associated with the operator $S$ is the space 
\begin{align*}
 \cH_S := \dom S \qquad \text{with norm} \qquad \|f\|_S := \|S f\|, \quad f \in \cH_S.
\end{align*}
Due to~\eqref{eq:mu}, $\cH_S$ is a normed space, and as $S$ is closed, it follows that $\cH_S$ is a Banach space. The norm $\| \cdot \|_S$ corresponds to the inner product $(f, g)_S = (S f, S g)$; hence $\cH_S$ is a Hilbert space. Moreover, there exists a constant $\widetilde \mu > 0$ such that 
\begin{align}\label{eq:alsoDoch}
 \|f\|_S \geq \widetilde \mu \|f\|, \qquad f \in \cH_S.
\end{align}
(Indeed, if not then for each $n \in \N$ there exists $f_n \in \cH_S$, w.l.o.g.\ $\|f_n\| = 1$, such that $\|S f_n\| < \frac{1}{n}$ and hence $\mu \leq (S f_n, f_n) \leq \|S f_n\| < \frac{1}{n}$ by~\eqref{eq:mu}, a contradiction to $\mu > 0$.) We further denote by $\cH_S^*$ the dual space of $\cH_S$ and write $(\cdot, \cdot)_{\cH_S^*, \cH_S}$ for the sesquilinear duality between $\cH_S^*$ and $\cH_S$, i.e.\ the continuous extension of
\begin{align*}
 (h, f)_{\cH_S^*, \cH_S} := (h, f), \qquad h \in \cH, \quad f \in \cH_S,
\end{align*}
to all $h \in \cH_S^*$. (Note that $\cH$ is dense in $\cH_S^*$ as $\cH_S$ is dense in $\cH$.)

It will sometimes be useful to consider $S$ as an operator from $\cH_S$ to $\cH$ rather than as an operator in $\cH$. Therefore we define
\begin{align*}
  \widetilde S : \cH_S \to \cH, \qquad \widetilde S f : = S f, \quad f \in \cH_S.
\end{align*}
Then $\widetilde S$ is bounded and its adjoint $\widetilde S^*$ is the unique bounded operator from $\cH$ to $\cH_S^*$ that satisfies
\begin{align*}
 \big( \widetilde S f, g \big) = \big( f, \widetilde S^* g \big)_{\cH_S, \cH_S^*}, \qquad f \in \cH_S, g \in \cH.
\end{align*}
Note that on the left-hand side we might as well replace $\widetilde S$ by $S$. For later use, we remark also that $\widetilde S^* g \in \cH$ implies $g \in \dom S^*$ and $S^* g = \widetilde S^* g$. 
In particular,
\begin{align}\label{eq:Kernels!}
 \ker \widetilde S^* = \ker S^*.
\end{align}

The following lemma is a variant of~\cite[Lemma~3.1]{AGMST10}. For the convenience of the reader, we provide a complete proof.

\begin{lemma}\label{lem:B}
Let Hypothesis \ref{hyp:abs} be satisfied. Then the operator $\widetilde S^* S : \cH_S \to \cH_S^*$ is bijective, and
\begin{align}\label{eq:B}
 B := (\widetilde S^* S)^{- 1} S : \cH_S \to \cH_S
\end{align}
is a bounded, self-adjoint, non-negative operator with $\ker B = \{0\}$. Moreover, a number $\lambda > 0$ is an eigenvalue of $S_{\rm K}$ if and only if $\lambda^{- 1}$ is an eigenvalue of $B$.
\end{lemma}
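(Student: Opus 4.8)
The plan is to recognise that $\widetilde S^{*}S$ is, after unwinding the definitions, nothing but the Riesz isomorphism of the Hilbert space $(\cH_S,(\cdot,\cdot)_S)$ onto its dual, and then to reduce every assertion about $B$ to two elementary inner-product identities, computed directly from the definition of $\widetilde S^{*}$ and of the norm $\|\cdot\|_S$.

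First I would establish bijectivity of $\widetilde S^{*}S\colon\cH_S\to\cH_S^{*}$. For $h,g\in\cH_S$ the defining relation of $\widetilde S^{*}$ together with $\|f\|_S=\|Sf\|$ yields
\[
 \big(\widetilde S^{*}Sh,g\big)_{\cH_S^{*},\cH_S}=\big(\widetilde Sh,\widetilde Sg\big)=(Sh,Sg)=(h,g)_S,
\]
so $\widetilde S^{*}Sh$ is exactly the functional $g\mapsto(h,g)_S$ on $\cH_S$; since $(\cdot,\cdot)_S$ is the inner product of $\cH_S$, the Riesz representation theorem shows that $h\mapsto\widetilde S^{*}Sh$ is a bijection of $\cH_S$ onto $\cH_S^{*}$, bounded (as a composition of bounded maps) with bounded inverse. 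Next, as $\cH\hookrightarrow\cH_S^{*}$ continuously by~\eqref{eq:alsoDoch}, the composition $B=(\widetilde S^{*}S)^{-1}S$ is a bounded operator in $\cH_S$; applying $\widetilde S^{*}S$ to $Bf$ and using the display above gives the key identity
\[
 (Bf,g)_S=(Sf,g),\qquad f,g\in\cH_S .
\]
From this everything follows: symmetry of $S$ gives $(Bf,g)_S=(Sf,g)=(f,Sg)=\overline{(Sg,f)}=\overline{(Bg,f)_S}$, so $B$ is self-adjoint, while $(Bf,f)_S=(Sf,f)\geq\mu\|f\|^{2}$ by~\eqref{eq:mu} gives non-negativity and $\ker B=\{0\}$.

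For the eigenvalue correspondence I would argue directly. If $Bf=\lambda^{-1}f$ with $0\neq f\in\cH_S$ and $\lambda>0$, the key identity gives $(Sf,Sg)=\lambda(Sf,g)$ for all $g\in\dom S$, so $g\mapsto(Sf,Sg)$ is $\cH$-bounded; hence $Sf\in\dom S^{*}$ with $S^{*}(Sf)=\lambda Sf$ (equivalently, invoke the remark that $\widetilde S^{*}g\in\cH$ forces $g\in\dom S^{*}$). Then $w:=Sf-\lambda f$ satisfies $S^{*}w=\lambda Sf-\lambda Sf=0$, so $Sf=\lambda f+w\in\dom S\dotplus\ker S^{*}=\dom S_{\rm K}$ and $S_{\rm K}(Sf)=\lambda Sf$ with $Sf\neq0$; thus $\lambda$ is an eigenvalue of $S_{\rm K}$. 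Conversely, if $S_{\rm K}u=\lambda u$ with $u\neq0$ and $\lambda>0$, write $u=v+w$ with $v\in\dom S$, $w\in\ker S^{*}$; then $Sv=S^{*}u=\lambda u$, and using symmetry of $S$ together with $(w,Sg)=\overline{(Sg,w)}=\overline{(g,S^{*}w)}=0$ one obtains $(Sv,Sg)=\lambda(Sv,g)$ for all $g\in\dom S$. Combined with $(S(Bv),Sg)=(Sv,g)$ this gives $(S(Bv-\lambda^{-1}v),Sg)=0$ for all $g\in\dom S$; since $Bv-\lambda^{-1}v\in\dom S$, the vector $S(Bv-\lambda^{-1}v)$ lies in $\ran S$ and is orthogonal to $\ran S$, hence vanishes, and injectivity of $S$ forces $Bv=\lambda^{-1}v$. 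Finally $v\neq0$, since $v=0$ would give $u=w\in\ker S^{*}$ and then $\lambda u=S^{*}u=0$.

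The main obstacle is the last step: once the Riesz identification is in place, the bijectivity of $\widetilde S^{*}S$ and the properties of $B$ are routine, but the eigenvalue equivalence requires spotting the right candidate eigenfunctions (namely $Sf$ for $S_{\rm K}$, and the $\dom S$-component $v$ of $u$ for $B$), verifying the membership $Sf\in\dom S_{\rm K}$ through the splitting $Sf=\lambda f+(Sf-\lambda f)$, and using the small orthogonality argument above to circumvent the fact that $\ran S$ need not be dense in $\cH$.
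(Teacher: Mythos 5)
Your proof is correct and follows essentially the same route as the paper's: identify $\widetilde S^* S$ with the Riesz isomorphism of $\cH_S$ onto its dual, derive the identity $(Bf,g)_S = (Sf,g)$, and translate the eigenvalue problem for $B$ into the abstract buckling equation $S^* S f = \lambda S f$ with the same candidate eigenfunctions ($Sf$ for $S_{\rm K}$, and the $\dom S$-component of the $S_{\rm K}$-eigenfunction for $B$). The only cosmetic differences are that the paper extracts that component as $S_{\rm F}^{-1} S_{\rm K} g$ and works with the equation $\widetilde S^* (S - \lambda) f = 0$ in $\cH_S^*$ together with $\ker \widetilde S^* = \ker S^*$, whereas you argue with inner products and a short orthogonality step in $\ran S$; both are equally valid.
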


\begin{proof}
The operator $\widetilde S^* S$ is injective as $\widetilde S^* S f = 0$ implies 
\begin{align*}
 0 = \big( \widetilde S^* S f, f \big)_{\cH_S^*, \cH_S} = (S f, S f) = \|S f\|^2,
\end{align*}
that is, $f \in \ker S$ which by~\eqref{eq:mu} implies $f = 0$. Furthermore, let $h \in \cH_S^*$. According to the Fr\'echet--Riesz theorem, there exists a unique $f \in \cH_S$ such that
\begin{align*}
 (g, h)_{\cH_S, \cH_S^*} = (g, f)_S = (S g, S f) = \big( g, \widetilde S^* S f \big)_{\cH_S, \cH_S^*} 
\end{align*}
holds for all $g \in \cH_S$, and hence $\widetilde S^* S f = h$. Thus $\widetilde S^* S$ is bijective and, by the open mapping theorem, has a bounded inverse. In particular, the operator $B$ in~\eqref{eq:B} is well-defined and bounded as the product of two bounded operators. 

Let us show next that $B$ is symmetric and thus self-adjoint. Indeed, for $f \in \cH_S$, we get
\begin{align}\label{eq:Bsym}
\begin{split}
 (B f, f)_S & = \big( S ( \widetilde S^* S)^{- 1} S f, S f \big) = \big( \widetilde S^* S ( \widetilde S^* S)^{- 1} S f, f \big)_{\cH_S^*, \cH_S} \\
 & = (S f, f) \geq \mu \|f\|^2
\end{split}
\end{align}
by~\eqref{eq:mu} and, in particular, $(B f, f)_S \in \R$. Hence $B$ is self-adjoint and non-negative, and~\eqref{eq:Bsym} also implies that $\ker B = \{0\}$.

Now let $\lambda > 0$ be such that $S_{\rm K} g = \lambda g$ holds for some $g \in \dom S_{\rm K}$, $g \neq 0$. Define also $f := S_{\rm F}^{- 1} S_K g$, where $S_{\rm F}$ is the Friedrichs extension of $S$. As $0 \notin \sigma (S_{\rm F})$ by~\eqref{eq:mu}, $f$ is well-defined and belongs to $\dom S_{\rm F}$. Moreover, as $g \in \dom S_{\rm K}$, by~\eqref{eq:AK} we can write $g = g_S + g_*$ with $g_S \in \dom S$ and $g_* \in \ker S^*$ and get
\begin{align}\label{eq:klapptDoch}
 f = S_{\rm F}^{- 1} S_{\rm K} g = S_{\rm F}^{- 1} S g_S + S_{\rm F}^{- 1} S^* g_* = S_{\rm F}^{- 1} S_{\rm F} g_S = g_S \in \dom S.
\end{align}
Furthermore, $f \neq 0$ as otherwise $g \in \ker S_{\rm K}$, contradicting $S_{\rm K} g = \lambda g \neq 0$, and $S f = S_{\rm F} f = S_{\rm K} g = \lambda g$ together with~\eqref{eq:klapptDoch} yields
\begin{align*}
 \widetilde S^* S f = \lambda \widetilde S^* g = \lambda S^* (g_S + g_*) = \lambda S g_S = \lambda S f.
\end{align*}
Thus $B f = \lambda^{- 1} f$, that is, $\lambda^{- 1}$ is an eigenvalue of $B$. 

Conversely, let $B f = \lambda^{- 1} f$ for some $\lambda > 0$ and $f \in \cH_S$, $f \neq 0$. Then $\widetilde S^* S f = \lambda S f$, which can be rewritten as $\widetilde S^* (S - \lambda) f = 0$, that is, $(S - \lambda) f \in \ker \widetilde S^* = \ker S^*$; see~\eqref{eq:Kernels!}. Define $g := \lambda^{- 1} S f$. Then $g$ is nonzero and
\begin{align*}
 f + \lambda^{- 1} (S - \lambda) f = f + g - f = g,
\end{align*}
which, due to $f \in \dom S$ and $(S - \lambda) f \in \ker S^*$, implies $g \in \dom S_{\rm K}$. Finally,
\begin{align*}
 S_{\rm K} g = \lambda^{- 1} \widetilde S^* S f = S f = \lambda g,
\end{align*}
that is, $\lambda$ is an eigenvalue of $S_{\rm K}$. This completes the proof.
\end{proof}

We point out that Lemma~\ref{lem:B} describes, in an abstract setting, the coincidence between the positive eigenvalues of the Krein--von Neumann extension and the eigenvalues of an abstract buckling problem; the latter reads $\widetilde S^* S f = \lambda S f$ and is discussed in detail in~\cite[Section~3]{AGMST10}.

Next we provide an abstract version of the min-max principle established for Krein Laplacians on domains in~\cite[Proposition~7.5]{AGMT10}. The Rayleigh quotient
\begin{align*}
 R_{\rm K} [f] := \frac{\|S f\|^2}{(S f, f)}, \qquad f \in \dom S, f \neq 0,
\end{align*}
is well-defined due to~\eqref{eq:mu}.

\begin{theorem}\label{thm:minMaxKrein}
Assume that Hypothesis \ref{hyp:abs} is satisfied and that the embedding $\iota : \cH_S \to \cH$ is compact. Then $\sigma (S_{\rm K})$ is nonnegative and consists of isolated eigenvalues, and the positive eigenvalues have finite multiplicities. Moreover, counted with multiplicities, the positive eigenvalues $\lambda_1^+ (S_{\rm K}) \leq \lambda_2^+ (S_{\rm K}) \leq \dots$ of $S_{\rm K}$ satisfy
\begin{align*}
 \lambda_j^+ (S_{\rm K}) = \min_{\substack{F \subset \dom S \\ \dim F = j}} \max_{\substack{f \in F \\ f \neq 0}} R_{\rm K} [f]
\end{align*}
for all $j \in \N$. 
\end{theorem}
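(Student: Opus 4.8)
The plan is to reduce the spectral statement about $S_{\rm K}$ to a statement about the operator $B$ from Lemma~\ref{lem:B} and then apply the standard min-max principle to $B$. First I would observe that the compactness of the embedding $\iota : \cH_S \to \cH$, together with the fact that $B = (\widetilde S^* S)^{-1} S$ maps $\cH_S$ to $\cH_S$ but factors through $\cH$ in the following sense: writing $B = (\widetilde S^* S)^{-1} \widetilde S^* \circ (\widetilde S^* S)^{-1} \widetilde S^* S$ is not quite the right factorisation, so instead I would argue directly that $B$ is compact. The cleanest route: from~\eqref{eq:Bsym} we have $(Bf,f)_S = (Sf,f) = (\widetilde Sf, \iota f)$; since $\widetilde S$ is bounded from $\cH_S$ to $\cH$ and $\iota$ is compact, the sesquilinear form $(f,g) \mapsto (\widetilde S f, \iota g)$ is compact on $\cH_S$, and a non-negative bounded self-adjoint operator whose associated form is compact is itself compact. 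Hence $B$ is a compact, self-adjoint, non-negative operator on the Hilbert space $\cH_S$ with $\ker B = \{0\}$.

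Next I would invoke the spectral theorem for compact self-adjoint operators: $\sigma(B) \setminus \{0\}$ consists of isolated eigenvalues of finite multiplicity, accumulating only at $0$, and since $\ker B = \{0\}$ while $\dim \cH_S = \infty$ (the interesting case; if $\cH_S$ is finite-dimensional the statement is a triviality), $0 \in \sigma(B)$ but is not an eigenvalue. By Lemma~\ref{lem:B}, $\lambda > 0$ is an eigenvalue of $S_{\rm K}$ if and only if $\lambda^{-1}$ is an eigenvalue of $B$; listing the eigenvalues of $B$ nonincreasingly as $\mu_1 \geq \mu_2 \geq \dots > 0$ (with multiplicity) and setting $\lambda_j^+(S_{\rm K}) := \mu_j^{-1}$ gives exactly the positive eigenvalues of $S_{\rm K}$, nondecreasing and with the correct multiplicities. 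To conclude that $\sigma(S_{\rm K})$ consists only of $\{0\}$ together with these isolated eigenvalues, I would note that $S_{\rm K}$ is self-adjoint and non-negative with $\ker S_{\rm K} = \ker S^*$ of finite dimension $n_\pm$ (finite because $\ker S^* \subset \cH_S^\perp$... more precisely, because $B$ compact forces $\ker S^* = \ker \widetilde S^*$ to be finite-dimensional via the Krein formula relating the two extensions, or simply because any nonzero point of $\sigma(S_{\rm K})$ is an eigenvalue by the correspondence and $0$ is the only possible accumulation point); the resolvent of $S_{\rm K}$ is then compact, so its spectrum is purely discrete.

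Finally, for the variational formula I would apply the classical Courant--Fischer min-max theorem to the compact self-adjoint operator $B$ on $\cH_S$:
\begin{align*}
 \mu_j = \max_{\substack{F \subset \cH_S \\ \dim F = j}} \min_{\substack{f \in F \\ f \neq 0}} \frac{(B f, f)_S}{\|f\|_S^2}.
\end{align*}
Using~\eqref{eq:Bsym}, $(Bf,f)_S = (Sf,f)$ and $\|f\|_S^2 = \|Sf\|^2$, so the Rayleigh quotient of $B$ is exactly $1/R_{\rm K}[f]$. Taking reciprocals turns the max-min for $\mu_j$ into the min-max for $\lambda_j^+(S_{\rm K}) = \mu_j^{-1}$ over $j$-dimensional subspaces $F \subset \cH_S = \dom S$, which is the asserted formula. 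The main obstacle is the first step — establishing compactness of $B$ cleanly from compactness of $\iota$, since $B$ is built from the unbounded data of $S$ via the duality pairing and the natural factorisation through $\cH$ is not completely transparent; once that is in place, the rest is a bookkeeping exercise in transferring the standard spectral theory and Courant--Fischer principle from $B$ on $\cH_S$ to $S_{\rm K}$ on $\cH$ through Lemma~\ref{lem:B}.
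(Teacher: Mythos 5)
Your reduction to the operator $B$ of Lemma~\ref{lem:B} followed by Courant--Fischer is essentially the paper's own route to the min-max formula: the paper obtains compactness of $B$ from the factorisation $B = (\widetilde S^* S)^{-1}\iota^* S$ and then carries out the max-min argument by hand with an orthonormal eigenbasis of $B$, whereas you obtain compactness from the form identity $(Bf,f)_S = (\widetilde S f, \iota f)$ (your weak-convergence argument for compactness of $B^{1/2}$, hence of $B$, is correct) and quote the classical min-max theorem and take reciprocals. That part of the proposal is sound and equivalent to the paper's.

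The genuine gap is in the first assertion, that $\sigma(S_{\rm K})$ consists of isolated eigenvalues. Your justification --- that $\ker S_{\rm K} = \ker S^*$ is finite-dimensional, hence the resolvent of $S_{\rm K}$ is compact and the spectrum purely discrete --- fails on both counts. First, $\ker S^*$ need \emph{not} be finite-dimensional under the hypotheses: the paper's own remark following Theorem~\ref{thm:inequalitiesAbstract} notes that for the minimal Laplacian on a smooth bounded domain in $\R^m$, $m \geq 2$, the embedding $\iota : \cH_S \to \cH$ is compact while $\ker S_{\rm K} = \ker S^*$ is the infinite-dimensional space of harmonic functions; there $S_{\rm K}$ does not have compact resolvent and its spectrum is not purely discrete, yet the theorem still applies (it only claims finite multiplicity for the \emph{positive} eigenvalues). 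Your parenthetical attempts to force finite-dimensionality of $\ker S^*$ from compactness of $B$ are therefore doomed. Second, your fallback --- ``any nonzero point of $\sigma(S_{\rm K})$ is an eigenvalue by the correspondence'' --- is not delivered by Lemma~\ref{lem:B}, which establishes a bijection between \emph{eigenvalues} of $S_{\rm K}$ in $(0,\infty)$ and eigenvalues of $B$ but says nothing a priori about continuous spectrum of $S_{\rm K}$; compactness of $B$ alone does not exclude essential spectrum of $S_{\rm K}$ away from $0$. What is needed is the statement that the compression of $S_{\rm K}$ to $(\ker S_{\rm K})^\perp$ has compact resolvent whenever $S_{\rm F}$ does; this is precisely what the paper imports from \cite[Theorem~2.10]{AGMST13} (equivalently, one upgrades Lemma~\ref{lem:B} to a unitary equivalence between that compression and the inverse of $B$, as in \cite{AGMST10}). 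With that input supplied, the remainder of your argument goes through.
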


\begin{proof}
As the embedding $\iota : \cH_S \to \cH$ is compact, it follows that the Friedrichs extension $S_{\rm F}$ of $S$ has a compact resolvent, from which it can be deduced that the compression of $S_{\rm K}$ to $(\ker S_{\rm K})^\perp$ has purely discrete spectrum equal to $\sigma (S_{\rm K}) \setminus \{0\}$; see, e.g.,~\cite[Theorem~2.10]{AGMST13}. In particular, the eigenvalues of $S_{\rm K}$ cannot accumulate to zero and thus $\sigma (S_{\rm K})$ is a discrete set, and the positive eigenvalues have finite multiplicities.

For the rest of this proof, we make the abbreviation $\lambda_j := \lambda_j^+ (S_{\rm K})$. Let $B : \cH_S \to \cH_S$ be the bounded, self-adjoint, nonnegative operator in Lemma~\ref{lem:B} whose eigenvalues coincide with $\{\lambda_j^{- 1} : j \in \N \}$. As $\iota$ is compact, the same holds for the embedding $\iota^* : \cH \to \cH_S^*$, and $B$ can be rewritten as
\begin{align*}
 B = (\widetilde S^* S)^{- 1} \iota^* S,
\end{align*}
which is also then compact. In particular, we can choose an orthonormal basis $\{f_j : j \in \N \}$ of $\cH_S$ such that $\lambda_j B f_j = f_j$, or equivalently $S^* S f_j = \lambda_j S f_j$, holds for all $j \in \N$. (Here we are assuming $\dim \cH_S = \infty$; the finite-dimensional case is exactly the same with a finite orthonormal basis.) Then for each $j \in \N$,
\begin{align*}
 R_{\rm K} [f_j] = \frac{\|S f_j\|^2}{(S f_j, f_j)} = \lambda_j \frac{\|S f_j\|^2}{(S^* S f_j, f_j)} = \lambda_j
\end{align*}
holds. Let us define $F_0 := \{0\}$ and
\begin{align*}
 F_j := \spann \left\{ f_k : k \leq j \right\}, \qquad j = 1, 2, \dots,
\end{align*}
and denote by $F_{j - 1}^\perp$ the orthogonal complement of $F_{j - 1}$ with respect to the inner product $(\cdot, \cdot)_S$ in $\cH_S$ for all $j \in \N$. Now fix $j \in \N$. Then any $f \in F_{j - 1}^\perp$ can be written as $f = \sum_{k = j}^\infty c_k f_k$ for appropriate $c_k \in \C$, where the sum converges in $\cH_S$ (and hence also in $\cH$ due to~\eqref{eq:alsoDoch}). Then the continuity of $S$ with respect to the norm in $\cH_S$ implies
\begin{align*}
 (S f, f) & = \sum_{k = j}^\infty c_k (S f_k, f) = \sum_{k = j}^\infty \lambda_k^{- 1} c_k (S^* S f_k, f) = \sum_{k = j}^\infty \lambda_k^{- 1} |c_k|^2 (f_k, f_k)_S \\
 & \leq \lambda_j^{- 1} \|f\|_S^2,
\end{align*}
and thus $R_{\rm K} [f] \geq \lambda_j$ for all $f \in F_{j - 1}^\perp$, with equality for $f = f_j$. Consequently,
\begin{align}\label{eq:minV}
 \min_{\substack{f \in F_{j - 1}^\perp \\ f \neq 0}} R_{\rm K} [f] = \lambda_j.
\end{align}
By a similar calculation, one verifies
\begin{align}\label{eq:maxV}
 \max_{\substack{f \in F_j \\ f \neq 0}} R_{\rm K} [f] = \lambda_j. 
\end{align}
Now let $G_j \subset \cH_S$ be a $j$-dimensional subspace with $G_j \neq F_j$. Then by a dimension argument, there exists $g_j \in (G_j \cap F_{j - 1}^\perp) \setminus \{0\}$, and~\eqref{eq:minV} gives
\begin{align*}
 \lambda_j = \min_{\substack{f \in F_{j - 1}^\perp \\ f \neq 0}} R_{\rm K} [f] \leq R_{\rm K} [g_j] \leq \max_{\substack{f \in G_j \\ f \neq 0}} R_{\rm K} [f].
\end{align*}
Together with~\eqref{eq:maxV}, this implies the assertion of the theorem.
\end{proof}

As a direct consequence, one gets the following comparison principle for the positive eigenvalues of $S_{\rm K}$ and the eigenvalues of any self-adjoint extension of $S$. The inequality between eigenvalues of $S_{\rm F}$ and $S_{\rm K}$ is mentioned for completeness, but it has been known for a long time, see, e.g. \cite[Theorem~5.1]{AS80}. However, it follows conveniently from the above min-max principle.

\begin{theorem}\label{thm:inequalitiesAbstract}
Assume that Hypothesis \ref{hyp:abs} is satisfied and that the embedding $\iota : \cH_S \to \cH$ is compact, and let $A$ be any self-adjoint extension of $S$ with a purely discrete spectrum. Moreover, let $d := \dim \ker A$. Then
\begin{align}\label{eq:indexShift}
 \lambda_{j + d} (A) \leq \lambda_j^+ (S_{\rm K})
\end{align}
holds for all $j \in \N$. In particular,
\begin{align}\label{eq:FriedrichsKreinIneq}
 \lambda_j (S_{\rm F}) \leq \lambda_j^+ (S_{\rm K}) 
\end{align}
holds for all $j \in \N$. If $j \in \N$ is such that $\lambda_j (S_{\rm F})$ is not an eigenvalue of $S$, then the inequality~\eqref{eq:FriedrichsKreinIneq} is strict, that is, $\lambda_j (S_{\rm F}) < \lambda_j^+ (S_{\rm K})$.
\end{theorem}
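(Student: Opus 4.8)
The plan is to deduce all three assertions from the min--max principle of Theorem~\ref{thm:minMaxKrein} by converting an optimal $j$-dimensional test space for $S_{\rm K}$ into a $(j+d)$-dimensional test space for $A$. Fix $j\in\N$ and let $f_1,f_2,\dots$ together with $F_j:=\spann\{f_1,\dots,f_j\}\subset\dom S$ be as in the proof of Theorem~\ref{thm:minMaxKrein}, so that $\max_{0\neq f\in F_j}R_{\rm K}[f]=\lambda_j^+(S_{\rm K})$; see~\eqref{eq:maxV}. Let $P$ be the orthogonal projection in $\cH$ onto $\ker A$ and set
\begin{align*}
 M := (I-P)F_j + \ker A.
\end{align*}
Since $F_j\subset\dom S\subset\dom A$ and $\ker A\subset\dom A$, we have $M\subset\dom A$. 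The first, routine step is the dimension count $\dim M=j+d$: the restriction of $I-P$ to $F_j$ is injective because $F_j\cap\ker A=\{0\}$ (a common element $f$ would satisfy $Sf=Af=0$, hence $f=0$ by~\eqref{eq:mu}), while $(I-P)F_j\subset(\ker A)^\perp$ has zero intersection with $\ker A$; also $d<\infty$ because $A$ has discrete spectrum.

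The heart of the proof is to bound the Rayleigh quotient of $A$ on $M$ by $R_{\rm K}$ on $F_j$. For $u=f-Pf+v$ with $f\in F_j$ and $v\in\ker A$ one has $Au=Af-APf+Av=Sf$, using $A\supset S$ and $Pf,v\in\ker A$. Since $A\subset S^*$ we have $\ker A\subset\ker S^*$, so $(Sf,Pf)=(f,S^*Pf)=0$ and $(Sf,v)=(f,S^*v)=0$; hence $(Au,u)=(Sf,f)$ and also $(Sf,f)=(Sf,(I-P)f)$. For the denominator, $\|u\|^2=\|(I-P)f\|^2+\|v\|^2\ge\|(I-P)f\|^2$ by orthogonality, while Cauchy--Schwarz gives $(Sf,f)=(Sf,(I-P)f)\le\|Sf\|\,\|(I-P)f\|$, so $\|(I-P)f\|^2\ge(Sf,f)^2/\|Sf\|^2$. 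Combining these, for $f\neq0$,
\begin{align*}
 \frac{(Au,u)}{\|u\|^2} \;\le\; \frac{(Sf,f)}{\|(I-P)f\|^2} \;\le\; \frac{\|Sf\|^2}{(Sf,f)} \;=\; R_{\rm K}[f] \;\le\; \lambda_j^+(S_{\rm K}),
\end{align*}
and $(Au,u)=0$ when $f=0$. Thus $\max_{0\neq u\in M}(Au,u)/\|u\|^2\le\lambda_j^+(S_{\rm K})$, and the Courant--Fischer min--max principle for $A$, applied to the $(j+d)$-dimensional subspace $M\subset\dom A$, gives~\eqref{eq:indexShift}. Taking $A=S_{\rm F}$ yields~\eqref{eq:FriedrichsKreinIneq}: indeed $S_{\rm F}$ has discrete spectrum because $\iota$ is compact, and $d=\dim\ker S_{\rm F}=0$ since $\min\sigma(S_{\rm F})\ge\mu>0$.

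For the strictness claim I would analyse equality in this chain with $A=S_{\rm F}$, now using $F_j$ directly so that all maxima are attained. Suppose $\lambda:=\lambda_j(S_{\rm F})=\lambda_j^+(S_{\rm K})$. Since $F_j\subset\dom S\subset\dom S_{\rm F}$ and $(S_{\rm F}f,f)=(Sf,f)$ there, the min--max for $S_{\rm F}$ together with Cauchy--Schwarz gives
\begin{align*}
 \lambda = \lambda_j(S_{\rm F}) \le \max_{0\neq f\in F_j}\frac{(Sf,f)}{\|f\|^2} \le \max_{0\neq f\in F_j}\frac{\|Sf\|^2}{(Sf,f)} = \lambda_j^+(S_{\rm K}) = \lambda,
\end{align*}
so there is $f^*\in F_j$ with $\|f^*\|=1$ and $(Sf^*,f^*)=\lambda$. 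As $f^*\in F_j$ also forces $R_{\rm K}[f^*]\le\lambda$, i.e.\ $\|Sf^*\|\le\lambda$, the relation $\lambda=(Sf^*,f^*)\le\|Sf^*\|\,\|f^*\|\le\lambda$ is an equality in Cauchy--Schwarz, which gives $Sf^*=\lambda f^*$; hence $\lambda$ is an eigenvalue of $S$. Contraposition yields the strict inequality whenever $\lambda_j(S_{\rm F})$ is not an eigenvalue of $S$.

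I expect the main obstacle to be exactly the construction of $M$: because $\ker A$ is in general not orthogonal to $\dom S$, naively using $F_j+\ker A$ fails to control the denominator, and one must both ``straighten'' the kernel directions with $I-P$ and invoke the inclusion $\ker A\subset\ker S^*$ to make every cross term vanish --- both in $(Au,u)$ and when passing from $(Sf,f)$ to $\|(I-P)f\|$. Once this is in place, the remainder (dimension count, two applications of Cauchy--Schwarz, and invoking~\eqref{eq:maxV} together with Courant--Fischer) is routine.
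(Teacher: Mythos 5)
Your proof is correct and follows essentially the same route as the paper: both bound the Rayleigh quotient of $A$ on the $(j+d)$-dimensional space $F_j+\ker A$ (your $(I-P)F_j+\ker A$ is the same subspace) by the buckling quotient $\|Sf\|^2/(Sf,f)$ via Cauchy--Schwarz, and both obtain strictness from the equality case of Cauchy--Schwarz forcing $Sf^*=\lambda f^*$. The only difference is cosmetic: the paper skips the orthogonal projection $P$ entirely by applying Cauchy--Schwarz in the form $(Au,u)/\|u\|^2\le\|Au\|^2/(Au,u)$ directly to $u=f+g$, since $Au=Sf$ and $(Au,u)=(Sf,f)$ do not depend on $g\in\ker A$.
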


\begin{proof}
Let us fix $j$ and choose a $j$-dimensional subspace $F$ of $\dom S$ such that
\begin{align*}
 \|S f\|^2 \leq \lambda_j^+ (S_{\rm K}) (S f, f) \qquad \text{for all}~f \in F.
\end{align*}
Then for any $f \in F$ and $g \in \ker A$ we have
\begin{align*}
 \big( A (f + g), f + g \big)^2 \leq \|A (f + g)\|^2 \|f + g\|^2,
\end{align*}
and hence
\begin{align}\label{eq:sum}
 \frac{\big(A (f + g), f + g \big)}{\|f + g\|^2} \leq \frac{\|A (f + g)\|^2}{\big( A (f + g), f + g \big)} = \frac{\|A f\|^2}{(A f, f)} = \frac{\|S f\|^2}{(S f, f)} = \lambda_j^+ (S_{\rm K}). 
\end{align}
Due to~\eqref{eq:mu}, $\ker A \cap \dom S = \{0\}$ and, thus $\dim (F + \ker A) = j + d$. Therefore~\eqref{eq:sum} together with the usual min-max principle for $A$ implies the assertion~\eqref{eq:indexShift}. Note that by the compactness of the embedding $\iota$, the spectrum of $S_{\rm F}$ is purely discrete, and thus~\eqref{eq:indexShift} implies~\eqref{eq:FriedrichsKreinIneq}. Finally, assume that $\lambda_j (S_{\rm F})$ is not an  eigenvalue of $S$, and let $g = 0$ in the estimate~\eqref{eq:sum}. Assuming $\lambda_j (S_{\rm F}) = \lambda_j^+ (S_{\rm K})$ for a contradiction, we get equality in~\eqref{eq:sum} for some nontrivial $f \in \dom S$, with $A= S_{\rm F}$. Then $f \in \ker (S_{\rm F} - \lambda_j (S_{\rm F})) \cap \dom S = \ker (S - \lambda_j (S_{\rm F}))$ follows, a contradiction.
\end{proof}

\begin{remark}
We wish to point out that compactness of the embedding of $\cH_S$ into~$\cH$ does not imply that all self-adjoint extensions of $S$ have a purely discrete spectrum. An example is the Krein--von Neumann extension of the Laplacian with both Dirichlet and Neumann boundary conditions on a bounded, sufficiently smooth domain in $\R^m$, $m \geq 2$, where $\ker S_{\rm K} = \ker S^*$ consists of all harmonic functions, and thus is infinite-dimensional, see, e.g.~\cite{AGMT10} for more details. 
\end{remark}


\begin{remark}
If the Krein--von Neumann extension of $S$ has purely discrete spectrum (in particular $d = \dim \ker S_{\rm K}$ is finite) we may choose $A = S_{\rm K}$ in Theorem~\ref{thm:inequalitiesAbstract}. As $\lambda_{j + d} (S_{\rm K}) = \lambda_j^+ (S_{\rm K})$, this shows that the inequality~\eqref{eq:indexShift} is not necessarily strict in general, not even if $S$ does not have any eigenvalues.
\end{remark}

Given two symmetric operators $S, \widetilde S$ in $\cH$ such that $S \subset \widetilde S$, we get the following interlacing properties of the positive eigenvalues of their respective Krein--von Neumann extensions. We will apply it several times in subsequent sections.

\begin{theorem}\label{thm:interlacingAbstract}
Let $S, \widetilde S$ be closed, densely defined, symmetric operators in $\cH$ with $S \subset \widetilde S$ such that \eqref{eq:mu} holds for $S$ replaced by $\widetilde S$. Moreover, assume that the embedding $\widetilde \iota : \cH_{\widetilde S} \to \cH$ is compact, and denote by $S_{\rm K}$ and $\widetilde S_{\rm K}$ the Krein--von Neumann extensions of $S$ and $\widetilde S$ respectively. Then $\sigma (S_{\rm K})$ and $\sigma (\widetilde S_{\rm K})$ are nonnegative and consist of isolated eigenvalues, and their positive eigenvalues have finite multiplicities. If we assume, in addition, that $\dom S$ is a subspace of $\dom \widetilde S$ of co-dimension $k$, then the positive eigenvalues of $S_{\rm K}$ and $\widetilde S_{\rm K}$ satisfy the interlacing inequalities
\begin{equation}\label{eq:interlacingAbstract}
 \lambda_j^+ (\widetilde S_{\rm K}) \leq \lambda_j^+ (S_{\rm K}) \leq \lambda_{j + k}^+ (\widetilde S_{\rm K}) \leq \lambda_{j + k}^+ (S_{\rm K})
\end{equation}
for all $j \in \N$.
\end{theorem}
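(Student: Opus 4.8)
The plan is to obtain \eqref{eq:interlacingAbstract} by combining the min-max principle of Theorem~\ref{thm:minMaxKrein} (applied to both $S_{\rm K}$ and $\widetilde S_{\rm K}$) with two elementary facts: first, that the Rayleigh quotient $R_{\rm K}[\cdot]$ has the \emph{same} form for $S$ and $\widetilde S$ on the smaller domain $\dom S$, since $\widetilde S f = S f$ there; and second, a dimension-counting argument using that $\dom S$ has co-dimension $k$ in $\dom\widetilde S$. The first and third inequalities in the chain are ``monotonicity'' statements (enlarging/shrinking the class of test subspaces), while the second and fourth are ``co-dimension-$k$ shift'' statements in the spirit of the classical min-max interlacing for finite-rank perturbations.

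First I would record the preliminaries. Since \eqref{eq:mu} holds for $\widetilde S$ and $S\subset\widetilde S$, it also holds for $S$; thus Hypothesis~\ref{hyp:abs} is satisfied by both operators, and the embedding $\iota:\cH_S\to\cH$ factors through $\widetilde\iota:\cH_{\widetilde S}\to\cH$ (indeed $\cH_S\subset\cH_{\widetilde S}$ with $\|f\|_S=\|f\|_{\widetilde S}$ for $f\in\dom S$, since $Sf=\widetilde Sf$), so $\iota$ is compact as well. Hence Theorem~\ref{thm:minMaxKrein} applies to both, giving the discreteness and finite-multiplicity claims and the representations
\begin{align*}
 \lambda_j^+(S_{\rm K})=\min_{\substack{F\subset\dom S\\ \dim F=j}}\max_{\substack{f\in F\\ f\neq0}}R_{\rm K}[f],
 \qquad
 \lambda_j^+(\widetilde S_{\rm K})=\min_{\substack{F\subset\dom\widetilde S\\ \dim F=j}}\max_{\substack{f\in F\\ f\neq0}}R_{\rm K}[f],
\end{align*}
where on both sides $R_{\rm K}[f]=\|Sf\|^2/(Sf,f)=\|\widetilde Sf\|^2/(\widetilde Sf,f)$ for $f\in\dom S$ (so there is no ambiguity in writing $R_{\rm K}$). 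The first inequality $\lambda_j^+(\widetilde S_{\rm K})\le\lambda_j^+(S_{\rm K})$ is then immediate: every $j$-dimensional $F\subset\dom S$ is also an admissible competitor for $\widetilde S_{\rm K}$, so the $\widetilde S_{\rm K}$-infimum is over a larger family and is no larger. The fourth inequality $\lambda_{j+k}^+(\widetilde S_{\rm K})\le\lambda_{j+k}^+(S_{\rm K})$ is the same statement with $j$ replaced by $j+k$.

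For the middle inequality $\lambda_j^+(S_{\rm K})\le\lambda_{j+k}^+(\widetilde S_{\rm K})$ I would argue as follows. Fix $j$ and pick, using the min-max for $\widetilde S_{\rm K}$, a $(j+k)$-dimensional subspace $G\subset\dom\widetilde S$ with $\max_{0\neq f\in G}R_{\rm K}[f]=\lambda_{j+k}^+(\widetilde S_{\rm K})$ (a minimising subspace exists because the infimum is attained, e.g.\ by the span of the first $j+k$ eigenfunctions of the operator $B$ from Lemma~\ref{lem:B}). Since $\dom S$ has co-dimension $k$ in $\dom\widetilde S$, the intersection $F:=G\cap\dom S$ has dimension at least $(j+k)-k=j$; choose a $j$-dimensional subspace $F_0\subseteq F$. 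Then $F_0$ is admissible in the min-max for $S_{\rm K}$ and $\max_{0\neq f\in F_0}R_{\rm K}[f]\le\max_{0\neq f\in G}R_{\rm K}[f]=\lambda_{j+k}^+(\widetilde S_{\rm K})$, whence $\lambda_j^+(S_{\rm K})\le\lambda_{j+k}^+(\widetilde S_{\rm K})$. Assembling the three inequalities yields \eqref{eq:interlacingAbstract}.

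The only point requiring mild care — and what I would regard as the main (though modest) obstacle — is the justification that the outer minimum in the min-max is actually \emph{attained}, which is what lets me ``pick'' the subspace $G$ above rather than only an approximately optimal one. This is genuinely needed for the co-dimension argument, since an $\eps$-optimal $G$ would only give $\lambda_j^+(S_{\rm K})\le\lambda_{j+k}^+(\widetilde S_{\rm K})+\eps$ unless one is careful, and one wants a clean inequality. Fortunately, attainment follows from Theorem~\ref{thm:minMaxKrein}'s proof: the eigenvalues $\lambda_j^+(\widetilde S_{\rm K})$ are exactly $\lambda_j^{-1}$ reciprocated from the discrete spectrum of the compact self-adjoint operator $\widetilde B$ on $\cH_{\widetilde S}$, and the span of its first $j+k$ eigenvectors realises the minimum. (Equivalently one could avoid attainment entirely by a limiting argument, but invoking the spectral decomposition of $\widetilde B$ is cleaner.) Everything else is bookkeeping with dimensions and the observation $Sf=\widetilde Sf$ on $\dom S$.
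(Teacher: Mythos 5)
Your proposal is correct and follows essentially the same route as the paper's proof: compactness and the norm identity $\|f\|_S=\|f\|_{\widetilde S}$ pass the hypotheses down to $S$, the outer inequalities come from enlarging the family of test subspaces in Theorem~\ref{thm:minMaxKrein}, and the middle one from intersecting a minimising $(j+k)$-dimensional subspace of $\dom\widetilde S$ with $\dom S$ and counting dimensions. Your explicit justification that the minimum is attained (via the eigenvectors of the compact operator from Lemma~\ref{lem:B}) is a point the paper leaves implicit, and is a welcome bit of extra care rather than a deviation.
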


\begin{proof}
Firstly, the assumption $S \subset \widetilde S$ implies $\cH_S \subset \cH_{\widetilde S}$ algebraically, together with
\begin{align*}
 \|f\|_S = \|f\|_{\widetilde S} \qquad \text{for all}~f \in \cH_S.
\end{align*}
Hence \eqref{eq:mu} follows also for $S$, and compactness of the embedding $\widetilde \iota$ implies compactness of the embedding $\iota : \cH_S \to \cH$. With the help of the latter, the discreteness statement on the spectra of $S_{\rm K}$ and $\widetilde S_{\rm K}$ follows from Theorem~\ref{thm:minMaxKrein}. 

Secondly, the first and third inequalities in~\eqref{eq:interlacingAbstract} follow directly from the inclusion $S \subset \widetilde S$, and the min-max principle in  Theorem~\ref{thm:minMaxKrein}. It remains to prove the middle inequality in~\eqref{eq:interlacingAbstract}. 

Let $j \in \N$ and let $\widetilde F \subset \dom \widetilde S$ be any $(j + k)$-dimensional subspace of $\dom \widetilde S$ such that
\begin{align*}
 \max_{0 \neq f \in \widetilde F}\frac{\|\widetilde S f\|^2}{(\widetilde S f, f)} = \lambda_{j + k}^+ (\widetilde S_{\rm K}).
\end{align*}
As $\dom S$ is a subspace of $\dom \widetilde S$ of co-dimension $k$, the subspace $F := \widetilde F \cap \dom S$ of $\dom S$ satisfies $\dom F \geq j$, and we have
\begin{equation*}
 \lambda_j^+(S_{\rm K}) \leq \max_{0 \neq f \in F}\frac{\|S f\|^2}{(S f, f)} = \max_{0 \neq f \in F}\frac{\|\widetilde S f\|^2}{(\widetilde S f, f)} \leq \max_{0 \neq f \in \widetilde F}\frac{\|\widetilde S f\|^2}{(\widetilde S f, f)} = \lambda_{j + k}^+ (\widetilde S_{\rm K}),
\end{equation*}
which completes the proof.
\end{proof}

We conclude this subsection with a comment on additive perturbations of the Krein--von Neumann extension.

\begin{remark}\label{rem:perturbation}
Assume that $Q = Q^*$ is a bounded, nonnegative, everywhere defined operator in $\cH$. If $S$ is closed, symmetric, densely defined, and satisfies \eqref{eq:mu} then all these properties are also true for $S + Q$, and thus $S + Q$ has a Krein--von Neumann extension which we denote by $(S + Q)_{\rm K}$. It is remarkable that this operator does not coincide with $S_{\rm K} + Q$, the additively perturbed Krein--von Neumann extension of $S$. This is in contrast to the Friedrichs extension, for which $(S + Q)_{\rm F} = S_{\rm F} + Q$ holds. For instance, if $Q = I$ is the identity operator then $(S + I)_{\rm K}$ has a nontrivial kernel (coinciding with $\ker (S^* + I)$), whilst $S_{\rm K} + I$ is bounded from below by one. Nevertheless, $S_{\rm K} + Q$ is a self-adjoint, nonnegative extension of $S + Q$ and we know thus that
\begin{align*}
 \lambda_j \big( (S + Q)_{\rm K} \big) \leq \lambda_j (S_{\rm K} + Q)
\end{align*}
holds for all $j \in \N$. On the other hand, by our Theorem \ref{thm:inequalitiesAbstract} one has
\begin{align*}
 \lambda_{j + d} (S_{\rm K} + Q) \leq \lambda_j^+ \big( (S + Q)_{\rm K} \big)
\end{align*}
for all $j \in \N$, where $d := \dim \ker (S_{\rm K} + Q) \leq \dim \ker S_{\rm K}$. 
\end{remark}

\subsection{The Krein--von Neumann extension in the framework of boundary triples}

In this subsection, we review properties of the Krein--von Neumann extension in the framework of boundary triples. Our main focus is on a Krein--type formula that expresses the resolvent difference between the Krein--von Neumann extension and another self-adjoint extension of $S$ (as, e.g. the Friedrichs extension) in terms of abstract boundary operators. We assume Hypothesis \ref{hyp:abs} throughout. First we recall the definition of a boundary triple. 

\begin{definition}
Assume Hypothesis \ref{hyp:abs}. A triple $\{\cG, \Gamma_0, \Gamma_1\}$ consisting of a Hilbert space $(\cG, (\cdot, \cdot)_\cG)$ and two linear mappings $\Gamma_1, \Gamma_2 : \dom S^* \to \cG$ is called {\em boundary triple for $S^*$} if the following conditions are satisfied:
\begin{enumerate}
 \item the mapping $\{\Gamma_0, \Gamma_1\} : \dom S^* \to \cG \times \cG$ is surjective;
 \item the {\em abstract Green identity} 
 \begin{align*}
  (S^* f, g) - (f, S^* g) = (\Gamma_1 f, \Gamma_0 g)_\cG - (\Gamma_0 f, \Gamma_1 g)_\cG
 \end{align*}
 holds for all $f, g \in \dom S^*$.
\end{enumerate}
\end{definition}

We remark that boundary triples exist for any symmetric, densely defined operator $S$ with equal defect numbers, even without the requirement \eqref{eq:mu}. For a detailed review on boundary triples and literature references we refer the reader to, e.g. the recent monograph \cite{BHS20} or \cite[Chapter 14]{S12}. 

For any given boundary triple, we have $S^* \upharpoonright (\ker \Gamma_0 \cap \ker \Gamma_1) = S$, and two self-adjoint extensions of $S$ are especially distinguished, namely 
\begin{align}\label{eq:AB}
 A := S^* \upharpoonright \ker \Gamma_0 \qquad \text{and} \qquad B := S^* \upharpoonright \ker \Gamma_1.
\end{align}
A boundary triple comes with two operator-valued functions defined on the resolvent set $\rho (A)$ of $A$.

\begin{definition}
Let Hypothesis \ref{hyp:abs} be satisfied, and let $\{\cG, \Gamma_0, \Gamma_1\}$ be a boundary triple for $S^*$. The mappings 
\begin{align*}
 \gamma : \rho (A) \to \cB (\cG, \cH) \qquad \text{and} \qquad M : \rho (A) \to \cB (\cG)
\end{align*}
defined as
\begin{align*}
 \gamma (\lambda) \Gamma_0 f = f \qquad \text{and} \qquad M (\lambda) \Gamma_0 f = \Gamma_1 f
\end{align*}
for $f \in \ker (S^* - \lambda)$ are called {\em $\gamma$-field} and {\em Weyl function} respectively, associated with the boundary triple $\{\cG, \Gamma_0, \Gamma_1\}$.
\end{definition}

The well-definedness of $\gamma (\lambda)$ and $M (\lambda)$ is due to the direct sum decomposition
\begin{align*}
 \dom S^* = \dom A \dotplus \ker (S^* - \lambda), \qquad \lambda \in \rho (A).
\end{align*}
The operator $\gamma (\lambda)$ can be viewed as an abstract Poisson operator, and $M (\lambda)$ may be interpreted as an abstract Dirichlet-to-Neumann map. It is well-known that $\lambda \mapsto M (\lambda)$ is an operator-valued Herglotz--Nevanlinna--Pick function. In particular, $M (\lambda)$ is self-adjoint for $\lambda \in \rho (A) \cap \R$ (if such points exist, which is always the case if \eqref{eq:mu} is assumed).

Boundary triples can be used to characterise e.g. self-adjoint extensions of $S$ in terms of abstract boundary conditions of the form $\Gamma_1 f = \Theta \Gamma_0 f$ with a self-adjoint parameter $\Theta$ acting in $\cG$. In order to actually describe all self-adjoint extensions of $S$, one needs to allow not only self-adjoint operators $\Theta$ but so-called self-adjoint linear relations (or multi-valued linear operators), and we do not go into these details here. For us it is sufficient to know the following; see e.g.~\cite[Theorems~2.1.3, 2.6.1, and~2.6.2]{BHS20}.

\begin{proposition}\label{prop:KreinFormula}
Let Hypothesis \ref{hyp:abs} be satisfied, let $\{ \cG, \Gamma_0, \Gamma_1\}$ be a boundary triple for $S^*$, and let $\Theta$ be a self-adjoint operator in $\cG$. Then
\begin{align*}
 A_\Theta := S^* \upharpoonright \big\{ f \in \dom S^* : \Gamma_1 f = \Theta \Gamma_0 f \big\}
\end{align*}
is a self-adjoint extension of $S$. Moreover, if we denote by $\lambda \mapsto \gamma (\lambda)$ and $\lambda \mapsto M (\lambda)$ the corresponding $\gamma$-field and Weyl function respectively, and $A$ is defined in~\eqref{eq:AB}, then the following assertions hold.
\begin{enumerate}
 \item The point $\lambda \in \rho (A)$ is an eigenvalue of $A_\Theta$ if and only if 0 is an eigenvalue of $\Theta - M (\lambda)$.
 \item The point $\lambda \in \rho (A)$ belongs to $\rho (A_\Theta)$ if and only if $0 \in \rho (\Theta - M (\lambda))$.
 \item For all $\lambda \in \rho (A) \cap \rho (A_\Theta)$,
 \begin{align*}
  (A_\Theta - \lambda)^{-1} - (A - \lambda)^{-1} = \gamma (\lambda) \big( \Theta - M (\lambda) \big)^{-1} \gamma (\overline \lambda)^*
 \end{align*}
 holds.
\end{enumerate}
\end{proposition}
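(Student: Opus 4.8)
The statement to prove is Proposition~\ref{prop:KreinFormula}, the standard Krein-type resolvent formula for the extension $A_\Theta$ together with the spectral correspondence between $A_\Theta$ and $\Theta - M(\lambda)$. Here is how I would approach it.

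\textbf{Setup and strategy.} The plan is to reduce everything to the direct sum decomposition $\dom S^* = \dom A \dotplus \ker(S^*-\lambda)$ for $\lambda \in \rho(A)$, together with the defining relations of the $\gamma$-field and the Weyl function, namely $\gamma(\lambda)\Gamma_0 f = f$ and $M(\lambda)\Gamma_0 f = \Gamma_1 f$ for $f \in \ker(S^*-\lambda)$. First I would record the self-adjointness of $A_\Theta$: since $\Theta$ is a self-adjoint operator in $\cG$, the boundary condition $\Gamma_1 f = \Theta\Gamma_0 f$ defines a self-adjoint extension of $S$ lying between $S$ and $S^*$. I would cite the standard correspondence between self-adjoint extensions and self-adjoint parameters in a boundary triple (e.g.\ \cite[Theorem~2.6.1]{BHS20}); no new argument is needed here.

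\textbf{Eigenvalue and resolvent set correspondences (i) and (ii).} Fix $\lambda \in \rho(A)$. If $f \in \dom A_\Theta$ satisfies $(S^*-\lambda)f = 0$, then $f \in \ker(S^*-\lambda)$, and the boundary condition reads $M(\lambda)\Gamma_0 f = \Gamma_1 f = \Theta\Gamma_0 f$, i.e.\ $(\Theta - M(\lambda))\Gamma_0 f = 0$. Conversely, given $\varphi \in \ker(\Theta - M(\lambda))$, the element $f := \gamma(\lambda)\varphi \in \ker(S^*-\lambda)$ has $\Gamma_0 f = \varphi$ and $\Gamma_1 f = M(\lambda)\varphi = \Theta\varphi = \Theta\Gamma_0 f$, so $f \in \ker(A_\Theta - \lambda)$. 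Since $\gamma(\lambda)$ is injective on $\cG$ (it is a bijection from $\cG$ onto $\ker(S^*-\lambda)$ via $\Gamma_0$), this gives a bijection between $\ker(A_\Theta-\lambda)$ and $\ker(\Theta - M(\lambda))$, proving (i). For (ii), if $0 \in \rho(\Theta - M(\lambda))$ I would solve $(A_\Theta - \lambda)u = g$ for arbitrary $g \in \cH$ by writing $u = (A-\lambda)^{-1}g + f_\lambda$ with $f_\lambda \in \ker(S^*-\lambda)$ to be determined: applying $\Gamma_1 - \Theta\Gamma_0$ and using $\Gamma_0(A-\lambda)^{-1}g = 0$ (since $(A-\lambda)^{-1}g \in \dom A = \ker\Gamma_0$) together with $(\Gamma_1 - \Theta\Gamma_0)f_\lambda = (M(\lambda)-\Theta)\Gamma_0 f_\lambda$, the boundary condition forces $\Gamma_0 f_\lambda = -(\Theta - M(\lambda))^{-1}\Gamma_1 (A-\lambda)^{-1}g$, which is solvable and determines $u$ uniquely; the reverse implication follows from (i) since an eigenvalue of $\Theta - M(\lambda)$ at $0$ would produce an eigenvalue of $A_\Theta$ at $\lambda$.

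\textbf{The resolvent formula (iii).} For $\lambda \in \rho(A)\cap\rho(A_\Theta)$ and $g \in \cH$, set $u := (A_\Theta - \lambda)^{-1}g$ and $v := (A-\lambda)^{-1}g$. Then $(S^*-\lambda)(u-v) = 0$, so $u - v \in \ker(S^*-\lambda)$ and hence $u - v = \gamma(\lambda)\Gamma_0(u-v) = \gamma(\lambda)\Gamma_0 u$ (using $\Gamma_0 v = 0$). It remains to identify $\Gamma_0 u$. From the boundary condition $\Gamma_1 u = \Theta\Gamma_0 u$ and $\Gamma_1 u = \Gamma_1 v + \Gamma_1(u-v) = \Gamma_1 v + M(\lambda)\Gamma_0 u$, we get $(\Theta - M(\lambda))\Gamma_0 u = \Gamma_1 v = \Gamma_1 (A-\lambda)^{-1}g$, hence $\Gamma_0 u = (\Theta - M(\lambda))^{-1}\Gamma_1(A-\lambda)^{-1}g$. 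Finally I would rewrite $\Gamma_1(A-\lambda)^{-1}$ as $\gamma(\overline\lambda)^*$: this is the standard identity $\gamma(\overline\lambda)^* = \Gamma_1(A-\lambda)^{-1}$ valid for $\lambda \in \rho(A)$, which follows from the abstract Green identity applied to $f \in \ker(S^*-\overline\lambda)$ and $(A-\lambda)^{-1}g$. Combining, $u - v = \gamma(\lambda)(\Theta - M(\lambda))^{-1}\gamma(\overline\lambda)^* g$, as claimed.

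\textbf{Main obstacle.} None of the steps is genuinely deep; the only point requiring a little care is the identity $\gamma(\overline\lambda)^* = \Gamma_1(A-\lambda)^{-1}$ and keeping track of which of $\lambda, \overline\lambda$ appears where, since the $\gamma$-field is evaluated at $\overline\lambda$ in the adjoint. This, together with the solvability argument in (ii), is where I would be most careful; everything else is a direct unwinding of the definitions of the boundary triple, the $\gamma$-field, and the Weyl function. In fact, since all of (i)--(iii) are classical, in the paper itself I would simply refer to \cite[Theorems~2.1.3, 2.6.1, and 2.6.2]{BHS20} rather than reproduce the argument.
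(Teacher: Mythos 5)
The paper does not prove this proposition at all: it states the result and refers to \cite[Theorems~2.1.3, 2.6.1, and~2.6.2]{BHS20}, which is exactly what you suggest doing in your closing remark. The argument you supply is the standard proof of these classical facts and is correct in substance: the reduction to the decomposition $\dom S^* = \dom A \dotplus \ker(S^*-\lambda)$, the identification of $\ker(A_\Theta-\lambda)$ with $\ker(\Theta-M(\lambda))$ via $\gamma(\lambda)$, the ansatz $u=(A-\lambda)^{-1}g+f_\lambda$ for surjectivity, and the identity $\gamma(\overline\lambda)^*=\Gamma_1(A-\lambda)^{-1}$ obtained from the abstract Green identity are all as in the cited reference. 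One small internal inconsistency: in your solvability argument for (ii) you write $\Gamma_0 f_\lambda = -(\Theta-M(\lambda))^{-1}\Gamma_1(A-\lambda)^{-1}g$, whereas the computation $(\Gamma_1-\Theta\Gamma_0)f_\lambda=(M(\lambda)-\Theta)\Gamma_0 f_\lambda$ gives $\Gamma_0 f_\lambda = +(\Theta-M(\lambda))^{-1}\Gamma_1(A-\lambda)^{-1}g$; this is harmless for (ii), which only needs solvability, and you use the correct sign in (iii), where it actually matters for matching the stated resolvent formula.
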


Characterisations analogous to item (i) in the previous theorem hold for other types of spectra too, such as the continuous or residual spectrum, but this is not of relevance for us in this work.

If the boundary triple is chosen such that $0 \in \rho (A)$, then the Krein--von Neumann extension of $S$ can be characterised in the following way; this is well-known, but for the convenience of the reader we repeat the short proof.

\begin{proposition}\label{prop:KreinBT}
Let Hypothesis \ref{hyp:abs} be satisfied, and let $\{ \cG, \Gamma_0, \Gamma_1\}$ be a boundary triple for $S^*$ such that $0 \in \rho (A)$. Moreover, let $\lambda \mapsto M (\lambda)$ denote the corresponding Weyl function. Then the Krein--von Neumann extension $S_{\rm K}$ of $S$ equals
\begin{align*}
 S_{\rm K} = S^* \upharpoonright \big\{ f \in \dom S^* : \Gamma_1 f = M (0) \Gamma_0 f \big\}.
\end{align*}
\end{proposition}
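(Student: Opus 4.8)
The plan is to identify the Krein--von Neumann extension, as described by its operator-theoretic definition in~\eqref{eq:AK}, with the extension associated to the boundary condition $\Gamma_1 f = M(0)\Gamma_0 f$. The key observation is that since $0 \in \rho(A)$, the Weyl function $M$ is well-defined at $\lambda = 0$, and by definition $M(0)\Gamma_0 f_* = \Gamma_1 f_*$ for every $f_* \in \ker S^* = \ker(S^* - 0)$. First I would denote by $A_{M(0)}$ the extension $S^* \upharpoonright \{f \in \dom S^* : \Gamma_1 f = M(0)\Gamma_0 f\}$; by Proposition~\ref{prop:KreinFormula} (applied with $\Theta = M(0)$, which is self-adjoint since $0 \in \rho(A) \cap \R$) this is a self-adjoint extension of $S$. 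It then suffices to show $\dom S_{\rm K} \subseteq \dom A_{M(0)}$, since two self-adjoint operators with one contained in the other must coincide.

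To verify this inclusion, take $f \in \dom S_{\rm K} = \dom S \dotplus \ker S^*$ and write $f = f_S + f_*$ with $f_S \in \dom S$ and $f_* \in \ker S^*$. Since $\dom S = \ker \Gamma_0 \cap \ker \Gamma_1$, both $\Gamma_0 f_S = 0$ and $\Gamma_1 f_S = 0$, so by linearity $\Gamma_0 f = \Gamma_0 f_*$ and $\Gamma_1 f = \Gamma_1 f_*$. Now $f_* \in \ker(S^* - 0)$, so the defining relations of the $\gamma$-field and the Weyl function give precisely $\Gamma_1 f_* = M(0)\Gamma_0 f_*$. Combining, $\Gamma_1 f = \Gamma_1 f_* = M(0)\Gamma_0 f_* = M(0)\Gamma_0 f$, so $f \in \dom A_{M(0)}$. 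Hence $S_{\rm K} \subseteq A_{M(0)}$, and self-adjointness of both sides forces equality.

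I do not anticipate a serious obstacle here: the only point requiring a moment's care is that $M(0)$ is genuinely a bounded self-adjoint operator on $\cG$ (not merely a linear relation), which is guaranteed by the assumption $0 \in \rho(A)$ together with the fact noted before Proposition~\ref{prop:KreinFormula} that $M(\lambda)$ is self-adjoint at real points of $\rho(A)$; this is what lets us invoke Proposition~\ref{prop:KreinFormula} with $\Theta = M(0)$ to conclude self-adjointness of $A_{M(0)}$. Everything else is a direct bookkeeping argument using the decomposition~\eqref{eq:AK} and the definitions of the boundary maps.
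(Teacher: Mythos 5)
Your proposal is correct and follows essentially the same route as the paper's own proof: establish that the restriction $A_{M(0)}$ is a self-adjoint extension of $S$ via Proposition~\ref{prop:KreinFormula} with $\Theta = M(0)$, then use the decomposition $f = f_S + f_*$ from~\eqref{eq:AK} together with $\dom S = \ker\Gamma_0 \cap \ker\Gamma_1$ and the defining relation of the Weyl function to show $S_{\rm K} \subseteq A_{M(0)}$, whence equality by self-adjointness. No gaps.
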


\begin{proof}
Since $M (0)$ is self-adjoint, the restriction of $S^*$ to all $f$ which satisfy $\Gamma_1 f = M (0) \Gamma_0 f$ is a self-adjoint extension of $S$ by Proposition~\ref{prop:KreinFormula}. Moreover, by definition, each $f \in \dom S_{\rm K}$ can be written uniquely as $f = f_S + f_*$ with $f_S \in \dom S$ and $f_* \in \ker S^*$, and therefore 
\begin{align*}
 \Gamma_1 f = \Gamma_1 f_* = M (0) \Gamma_0 f_* = M (0) \Gamma_0 f,
\end{align*}
where we have used $\dom S = \ker \Gamma_0 \cap \ker \Gamma_1$. This completes the proof.
\end{proof}

Now that we have this characterisation of the domain of $S_{\rm K}$ at hand, we may use the above Krein--type resolvent formula to express the difference to both the distinguished self-adjoint extensions $A$ and $B$ of $S$.

\begin{proposition}\label{prop:Krein}
Assume that Hypothesis \ref{hyp:abs} holds. Let $\{ \cG, \Gamma_0, \Gamma_1\}$ be a boundary triple for $S^*$, and let $\lambda \mapsto \gamma (\lambda)$ and $\lambda \mapsto M (\lambda)$ denote the corresponding $\gamma$-field and Weyl function respectively. Let $A$ and $B$ be given in~\eqref{eq:AB}, and assume that $0 \in \rho (A)$. Then the following identities hold.
\begin{enumerate}
 \item For all $\lambda \in \rho (A) \cap \rho (S_{\rm K})$,
 \begin{align}\label{eq:KreinA}
  (S_{\rm K} - \lambda)^{-1} - (A - \lambda)^{-1} = \gamma (\lambda) \big( M (0) - M (\lambda) \big)^{-1} \gamma (\overline \lambda)^*
 \end{align}
 holds.
 \item For all $\lambda \in \rho (B) \cap \rho (S_{\rm K}) \cap \rho (A)$, the operator $M (\lambda)$ is invertible with $M (\lambda)^{-1} \in \cB (\cG)$ and
 \begin{align}\label{eq:KreinB}
  (S_{\rm K} - \lambda)^{-1} - (B - \lambda)^{-1} = \gamma (\lambda) \big( M (0) - M (\lambda) \big)^{-1} M (0) M (\lambda)^{-1} \gamma (\overline \lambda)^*
 \end{align}
 holds.
\end{enumerate}
\end{proposition}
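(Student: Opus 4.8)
The strategy is to obtain both identities by combining the general Krein formula from Proposition~\ref{prop:KreinFormula}(iii) with the description of $S_{\rm K}$ in Proposition~\ref{prop:KreinBT}. For part (i), I would take $\Theta = M(0)$ in Proposition~\ref{prop:KreinFormula}: since $0 \in \rho(A)$ the Weyl function $M(0)$ is a bounded self-adjoint operator in $\cG$, and Proposition~\ref{prop:KreinBT} identifies $A_{\Theta} = A_{M(0)}$ with $S_{\rm K}$. Hence for $\lambda \in \rho(A) \cap \rho(S_{\rm K})$ we have $0 \in \rho(M(0) - M(\lambda))$ by item (ii) of that proposition, and item (iii) yields exactly~\eqref{eq:KreinA}.

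For part (ii), the first task is to verify that $M(\lambda)^{-1} \in \cB(\cG)$ for $\lambda \in \rho(B) \cap \rho(A)$. This is a standard fact: the self-adjoint extension $B = S^* \upharpoonright \ker \Gamma_1$ corresponds formally to the boundary relation $\Gamma_0 f = 0$, and by Proposition~\ref{prop:KreinFormula}(i)--(ii) applied with the ``swapped'' roles of $\Gamma_0, \Gamma_1$ (or directly from the known property that $0 \in \rho(M(\lambda))$ iff $\lambda \in \rho(A) \cap \rho(B)$), invertibility of $M(\lambda)$ with bounded inverse follows. Alternatively one invokes the representation $M(\lambda)^{-1} = -\widetilde\gamma(\lambda)^* \widetilde\gamma(\overline\lambda)$-type formulas or the cited results in~\cite{BHS20}. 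Once this is in hand, I would write the resolvent difference $(S_{\rm K} - \lambda)^{-1} - (B - \lambda)^{-1}$ as the sum
\begin{align*}
 \big[(S_{\rm K} - \lambda)^{-1} - (A - \lambda)^{-1}\big] + \big[(A - \lambda)^{-1} - (B - \lambda)^{-1}\big],
\end{align*}
insert~\eqref{eq:KreinA} for the first bracket, and use the well-known Krein formula relating the two distinguished extensions $A$ and $B$, namely $(A - \lambda)^{-1} - (B - \lambda)^{-1} = -\gamma(\lambda) M(\lambda)^{-1} \gamma(\overline\lambda)^*$. It then remains to combine the two contributions algebraically: factoring out $\gamma(\lambda)$ on the left and $\gamma(\overline\lambda)^*$ on the right, the middle operator becomes $(M(0) - M(\lambda))^{-1} - M(\lambda)^{-1}$, and a short manipulation in $\cB(\cG)$,
\begin{align*}
 (M(0) - M(\lambda))^{-1} - M(\lambda)^{-1} = (M(0) - M(\lambda))^{-1}\big[M(\lambda) - (M(0) - M(\lambda)) + M(0)\big]M(\lambda)^{-1} \cdot \tfrac12,
\end{align*}
done correctly gives $(M(0) - M(\lambda))^{-1} M(0) M(\lambda)^{-1}$, which is~\eqref{eq:KreinB}. (The clean way: $(M(0)-M(\lambda))^{-1} - M(\lambda)^{-1} = (M(0)-M(\lambda))^{-1}\big(M(\lambda) - (M(0)-M(\lambda))\big)M(\lambda)^{-1}$ is wrong; instead write $(M(0)-M(\lambda))^{-1} M(0) M(\lambda)^{-1} - (M(0)-M(\lambda))^{-1}(M(0)-M(\lambda))M(\lambda)^{-1} = (M(0)-M(\lambda))^{-1}M(0)M(\lambda)^{-1} - M(\lambda)^{-1}$, confirming the identity.)

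The main obstacle is really just the bookkeeping in the last algebraic step together with making sure all operators are genuinely bounded and invertible on the relevant spectral region — i.e.\ justifying that $\lambda \in \rho(B) \cap \rho(S_{\rm K}) \cap \rho(A)$ guarantees simultaneously that $M(0) - M(\lambda)$ and $M(\lambda)$ are boundedly invertible in $\cB(\cG)$. Both facts are consequences of Proposition~\ref{prop:KreinFormula}(ii): the first because $S_{\rm K} = A_{M(0)}$, the second because $B$ arises as the extension with boundary parameter ``$\infty$'', equivalently because $0 \in \rho(M(\lambda))$ characterises $\lambda \in \rho(A) \cap \rho(B)$. Once these are recorded, the two resolvent formulae drop out by the factoring argument sketched above, and no further analysis is needed.
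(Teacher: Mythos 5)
Your overall strategy is exactly the paper's: part (i) is Proposition~\ref{prop:KreinFormula}\,(iii) with $\Theta = M(0)$ combined with the identification of $S_{\rm K}$ from Proposition~\ref{prop:KreinBT}, and part (ii) is obtained by routing the resolvent difference through $A$ and then manipulating the middle factor. The invertibility of $M(\lambda)$ for $\lambda \in \rho(A)\cap\rho(B)$ also comes out the way you indicate, most directly from Proposition~\ref{prop:KreinFormula}\,(ii) with $\Theta = 0$, since $B = A_0$; no ``swapping'' of $\Gamma_0,\Gamma_1$ is needed.

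However, the execution of part (ii) contains a sign error that your own closing ``confirmation'' contradicts. With the paper's conventions, Proposition~\ref{prop:KreinFormula}\,(iii) with $\Theta = 0$ gives $(B-\lambda)^{-1} - (A-\lambda)^{-1} = -\gamma(\lambda)M(\lambda)^{-1}\gamma(\overline\lambda)^*$, hence
\begin{align*}
(A-\lambda)^{-1} - (B-\lambda)^{-1} = +\,\gamma(\lambda)M(\lambda)^{-1}\gamma(\overline\lambda)^*,
\end{align*}
not $-\gamma(\lambda)M(\lambda)^{-1}\gamma(\overline\lambda)^*$ as you wrote. Consequently the middle operator after factoring out $\gamma(\lambda)$ and $\gamma(\overline\lambda)^*$ is the \emph{sum} $(M(0)-M(\lambda))^{-1} + M(\lambda)^{-1}$, not the difference. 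And it is the sum that equals $(M(0)-M(\lambda))^{-1}M(0)M(\lambda)^{-1}$: your final computation amounts to
\begin{align*}
(M(0)-M(\lambda))^{-1}M(0)M(\lambda)^{-1} - M(\lambda)^{-1} &= (M(0)-M(\lambda))^{-1}\big(M(0) - (M(0)-M(\lambda))\big)M(\lambda)^{-1} \\
&= (M(0)-M(\lambda))^{-1},
\end{align*}
which is precisely the sum identity and is incompatible with your claim that $(M(0)-M(\lambda))^{-1} - M(\lambda)^{-1}$ equals the target. The two sign errors cancel, so you do land on the correct formula~\eqref{eq:KreinB}, but as written the argument is internally inconsistent; correcting the sign in the $A$--$B$ Krein formula repairs the step and reproduces the paper's proof.
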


\begin{proof}
Assertion (i) follows directly from plugging the result of Proposition~\ref{prop:KreinBT} into the resolvent formula of Proposition~\ref{prop:KreinFormula}~(iii). On the other hand, the operator $B$ corresponds to the operator $A_\Theta$ with $\Theta = 0$, and hence
\begin{align*}
 (A - \lambda)^{-1} - (B - \lambda)^{-1} = \gamma (\lambda) M (\lambda)^{-1} \gamma (\overline{\lambda})^*
\end{align*}
for all $\lambda \in \rho (A) \cap \rho (B)$. For those $\lambda$ which additionally belong to $\rho (S_{\rm K})$, we combine the latter formula with assertion (i) of the present proposition to get
\begin{align*}
 (S_{\rm K} - \lambda)^{-1} - (B - \lambda)^{-1} = \gamma (\lambda) \Big[ \big( M (0) - M (\lambda) \big)^{-1} + M (\lambda)^{-1} \Big] \gamma (\overline \lambda)^*.
\end{align*}
From this, the assertion (ii) follows by an easy calculation left to the reader.
\end{proof}

The resolvent formulae in the previous proposition may be used to determine the rank of the resolvent differences as follows.

\begin{corollary}\label{cor:KreinB}
Assume that Hypothesis \ref{hyp:abs} is satisfied. Let $\{ \cG, \Gamma_0, \Gamma_1\}$ be a boundary triple for $S^*$ with Weyl function $\lambda \mapsto M (\lambda)$, and let $A, B$ be as defined in~\eqref{eq:AB}. Moreover, let $0 \in \rho (A)$. Then the following hold.
\begin{enumerate}
 \item For all $\lambda \in \rho (A) \cap \rho (S_{\rm K})$,
\begin{align*}
\dim \ran \left[ (S_{\rm K} - \lambda)^{-1} - (A - \lambda)^{-1} \right] = \dim \ker (S^*-\overline{\lambda}) = \dim \cG.
\end{align*}
\item For all $\lambda \in \rho (A) \cap \rho (B) \cap \rho (S_{\rm K})$,
\begin{align*}
 \dim \ran \left[ (S_{\rm K} - \lambda)^{-1} - (B - \lambda)^{-1} \right] = \dim \ran M (0).
\end{align*}
\end{enumerate}
\end{corollary}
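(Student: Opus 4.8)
The plan is to read off both rank formulas from the Krein-type resolvent identities established in Proposition~\ref{prop:Krein}, using the standard fact that for the $\gamma$-field of a boundary triple one has $\ran \gamma(\lambda) = \ker(S^* - \lambda)$ and $\gamma(\lambda)$ is injective, so that $\dim \ran \gamma(\lambda) = \dim \ker (S^* - \lambda) = \dim \cG$ for every $\lambda \in \rho(A)$. Since the adjoint $\gamma(\overline\lambda)^* : \cH \to \cG$ of an injective bounded operator with closed (indeed finite-dimensional, under the compactness hypothesis, but in any case via $\dim\cG$) range is surjective, the rank of a product $\gamma(\lambda) X \gamma(\overline\lambda)^*$ equals the rank of $\gamma(\lambda) X$, which in turn equals $\dim \ran X$ when $\gamma(\lambda)$ is injective.

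For assertion (i), I would start from~\eqref{eq:KreinA}: the middle operator $\big(M(0) - M(\lambda)\big)^{-1}$ is a bijection on $\cG$ for $\lambda \in \rho(A) \cap \rho(S_{\rm K})$ (this invertibility is exactly what Proposition~\ref{prop:KreinFormula}(ii) gives, since $S_{\rm K} = A_{M(0)}$ by Proposition~\ref{prop:KreinBT}, so $\lambda \in \rho(S_{\rm K})$ is equivalent to $0 \in \rho(M(0) - M(\lambda))$). Hence $\big(M(0) - M(\lambda)\big)^{-1}\gamma(\overline\lambda)^*$ has the same range as $\gamma(\overline\lambda)^*$, namely all of $\cG$ up to a bijection, and applying the injective $\gamma(\lambda)$ yields a range of dimension $\dim \cG = \dim \ker(S^* - \overline\lambda)$. (The equality $\dim\ker(S^*-\overline\lambda) = \dim\cG$ is just the constancy of the defect number together with the definition of the boundary triple.)

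For assertion (ii), I would use~\eqref{eq:KreinB}: here the middle operator is $\big(M(0) - M(\lambda)\big)^{-1} M(0) M(\lambda)^{-1}$. Both outer factors $\big(M(0) - M(\lambda)\big)^{-1}$ and $M(\lambda)^{-1}$ are bijections on $\cG$ (the former for $\lambda \in \rho(S_{\rm K}) \cap \rho(A)$ as above, the latter because $\lambda \in \rho(B)$ forces $M(\lambda)^{-1} \in \cB(\cG)$ by Proposition~\ref{prop:Krein}(ii)). Therefore $\ran\big[\big(M(0) - M(\lambda)\big)^{-1} M(0) M(\lambda)^{-1}\gamma(\overline\lambda)^*\big] = \ran\big[M(0) M(\lambda)^{-1}\gamma(\overline\lambda)^*\big]$; since $M(\lambda)^{-1}\gamma(\overline\lambda)^*$ is surjective onto $\cG$, this equals $\ran M(0)$. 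Feeding this through the injective $\gamma(\lambda)$ preserves the dimension, giving $\dim \ran\big[(S_{\rm K} - \lambda)^{-1} - (B - \lambda)^{-1}\big] = \dim \ran M(0)$.

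The only point requiring a little care — and the main (minor) obstacle — is the bookkeeping of which $\lambda$ the invertibility statements are valid for, and the elementary linear-algebra lemma that for a product $C X D$ with $C$ injective and $D$ surjective one has $\ran(CXD) = C(\ran X)$ and hence $\dim\ran(CXD) = \dim\ran X$; this is where the injectivity of $\gamma(\lambda)$ and the surjectivity of $\gamma(\overline\lambda)^*$ (equivalently, the density of $\ran\gamma(\overline\lambda)$, which holds since $\ran\gamma(\overline\lambda) = \ker(S^*-\overline\lambda)$ is closed, being finite-dimensional or at least since $\gamma(\overline\lambda)^*$ has range $\cG$ by the surjectivity part of the boundary triple axioms) get used. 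Once this lemma is isolated, both parts are one line each, and I would state it explicitly at the start of the proof to keep things clean.
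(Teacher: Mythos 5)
Your argument is correct and is essentially the paper's own proof: both read the ranks off the Krein-type formulae \eqref{eq:KreinA} and \eqref{eq:KreinB}, using that $\gamma(\lambda)$ is an isomorphism onto $\ker(S^*-\lambda)$, that $\gamma(\overline\lambda)^*$ maps onto $\cG$ (the paper phrases this as the restriction of $\gamma(\overline\lambda)^*$ to $\ker(S^*-\overline\lambda)$ being an isomorphism, which is the same fact via the closed range theorem), and that the outer middle factors are bijections of $\cG$. The only cosmetic blemish is your parenthetical appeal to ``density of $\ran\gamma(\overline\lambda)$'' --- what is actually needed is injectivity plus closedness of that range, exactly as you also note --- but this does not affect the validity of the proof.
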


\begin{proof}
This follows rather directly from formulas~\eqref{eq:KreinA} and~\eqref{eq:KreinB} in a way similar to the proof of~\cite[Theorem 2.8.3]{BHS20}. In fact, we use that $\gamma (\lambda) : \cG \to \ker (S^* - \lambda)$ is an isomorphism and that 
\begin{align*}
 \ker \gamma (\overline \lambda)^* = \left( \ran \gamma (\overline \lambda) \right)^\perp = \left( \ker (S^* - \overline{\lambda}) \right)^\perp.
\end{align*}
This implies that 
\begin{align*}
 \ran \left[ (S_{\rm K} - \lambda)^{-1} - (A - \lambda)^{-1} \right] = \ran \left[ (S_{\rm K} - \lambda)^{-1} - (A - \lambda)^{-1} \right] \upharpoonright \ker (S^* - \overline{\lambda}),
\end{align*}
for all $\lambda \in \rho (A) \cap \rho (S_{\rm K})$, with the same equation holding after replacing $A$ with $B$ for all $\lambda \in \rho (A) \cap \rho (B) \cap \rho (S_{\rm K})$. Finally, as 
\begin{align*}
 \gamma (\overline \lambda)^* \upharpoonright \ker (S^* - \overline{\lambda}) : \ker (S^* - \overline{\lambda}) \to \cG
\end{align*}
is an isomorphism and both $(M (0) - M (\lambda))^{-1}$ and $M (\lambda)^{-1}$ are isomorphisms of $\cG$, the desired result follows from~\eqref{eq:KreinA} and~\eqref{eq:KreinB}.
\end{proof}

\section{Perturbed Krein Laplacians on metric graphs}\label{sec:Krein}

In this section and all sections which follow, we assume that $\Gamma$ is a metric graph consisting of a vertex set $\cV$, an edge set $\cE$, and a length function $\ell : \cE \to (0, \infty)$ which assigns a length to each edge. Every edge $e \in \cE$ is identified with the interval $[0, \ell (e)]$, and this parametrisation gives rise to a natural metric on $\Gamma$. We will always assume that $\Gamma$ is finite, i.e. $V := |\cV|$ and $E := |\cE|$ are finite numbers, and we consider only connected graphs.

We view a function $f : \Gamma \to \C$ as a collection of functions $f_e : (0, \ell (e)) \to \C$, $e \in \cE$, and say, accordingly, that $f$ belongs to $L^2 (\Gamma)$ if $f_e \in L^2 (0, \ell (e))$ for each $e \in \cE$. In order to define Schr\"odinger operators on metric graphs we make use of the Sobolev spaces 
\begin{align*}
 \widetilde H^k (\Gamma) := \left\{ f \in L^2 (\Gamma) : f_e \in H^k (0, \ell (e))~\text{for each}~e \in \cE \right\},
\end{align*}
$k \in \N$. For functions in $\widetilde H^1 (\Gamma)$, we may talk about continuity at a vertex $v$, meaning that for any two edges $e, \hat e$ incident with $v$, the limit values (or traces) of $f_e$ and $f_{\hat e}$ at the endpoints of the edges corresponding to $v$ coincide. In this sense, we make use of the function space
\begin{align*}
 H^1 (\Gamma) := \left\{ f \in \widetilde H^1 (\Gamma) : f~\text{is continuous at each vertex} \right\}.
\end{align*}
Moreover, for $f \in \widetilde H^2 (\Gamma)$ and $v \in \cV$, we write
\begin{align*}
 \partial_\nu f (v) := \sum \partial f_e (v),
\end{align*}
where the sum is taken over all edges $e$ incident with $v$, and $\partial f_e (v)$ is the derivative of $f_e$ at the endpoint corresponding to $v$, taken in the direction pointing towards~$v$; if $e$ is a loop then both endpoints have to be taken into account.

We will consider Schr\"odinger operators on metric graphs with potentials that are, for simplicity, bounded. However, everything may be extended easily to form-bounded (i.e.\ $L^1$) potentials. We will always assume the following hypothesis.

\begin{hypothesis}\label{hyp}
On the finite, connected metric graph $\Gamma$, the potential $q : \Gamma \to \R$ is measurable and bounded, and $q (x) \geq 0$ holds for almost all $x \in \Gamma$.
\end{hypothesis}

Under Hypothesis~\ref{hyp}, we define the Schr\"odinger operator with potential $q$ subject to Dirichlet and Kirchhoff vertex conditions at all vertices,
\begin{align}\label{eq:SV}
\begin{split}
 (S f)_e & = - f_e'' + q_e f_e \qquad \text{on each edge}~e \in \cE, \\
 \dom S & = \widetilde H_0^2 (\Gamma) := \left\{ f \in \widetilde H^2 (\Gamma) \cap H^1 (\Gamma) : \partial_\nu f (v) = f (v) = 0~\text{for each}~v \in \cV \right\}.
\end{split}
\end{align}
It is easy to see that $S$ is a symmetric, nonnegative, densely defined operator in the Hilbert space $L^2 (\Gamma)$. Since $\oplus_{e \in \cE} C_0^\infty (0, \ell (e)) \subset \dom S$, the Friedrichs extension of $S$ is the operator $- \Delta_{{\rm D}, \Gamma, q}$, called the \emph{perturbed Dirichlet Laplacian}, given by
\begin{align*}
 (- \Delta_{{\rm D},\Gamma, q} f)_e & = - f_e'' + q_e f_e \qquad \text{on each edge}~e \in \cE, \\
 \dom (- \Delta_{{\rm D},\Gamma, q}) & = \left\{ f \in \widetilde H^2 (\Gamma) \cap H^1 (\Gamma) : f (v) = 0~\text{for each}~v \in \cV \right\};
\end{align*}
if $q = 0$ identically, we just write $- \Delta_{{\rm D}, \Gamma}$ and call it the \emph{Dirichlet Laplacian}. The operator $- \Delta_{{\rm D}, \Gamma, q}$ has a purely discrete spectrum. In the case $q = 0$ identically, the latter is given by
\begin{align}\label{eq:VDirichleteigenvalues}
 \sigma (- \Delta_{{\rm D},\Gamma}) = \left\{ \lambda = \frac{k^2 \pi^2}{\ell (e)^2} : e \in \cE, k = 1, 2, \dots \right\},
\end{align}
where the multiplicity of an eigenvalue $\lambda$ coincides with the number of values $k$ and edges $e$ for which $\lambda = \frac{k^2 \pi^2}{\ell (e)^2}$. In particular, 
\begin{align*}
 \min \sigma (- \Delta_{{\rm D},\Gamma, q}) \geq \min \sigma (- \Delta_{{\rm D},\Gamma}) = \frac{\pi^2}{(\max_{e \in \cE} \ell (e))^2} =: \mu > 0,
\end{align*}
where we have used the assumption that $q$ is nonnegative, and the inclusion $S \subset - \Delta_{{\rm D},\Gamma, q}$ implies 
\begin{align*}
 (S f, f) \geq \mu \|f\|^2, \quad f \in \dom S,
\end{align*}
where $(\cdot, \cdot)$ and $\| \cdot \|$ denote the inner product and norm respectively in $L^2 (\Gamma)$. By an easy integration by parts, the adjoint of $S$ is given by
\begin{align*}
 (S^* f)_e & = - f_e'' + q_e f_e \qquad \text{on each edge}~e \in \cE, \\
 \dom S^* & = \widetilde H^2 (\Gamma) \cap H^1 (\Gamma).
\end{align*}
The two self-adjoint extensions of $S$ in focus here will be the Krein--von Neumann extension of $S$ and the Schr\"odinger operator with \emph{standard vertex conditions} (also called natural or continuity-Kirchhoff conditions), namely continuity and the Kirchhoff condition $\partial_\nu f=0$, at all vertices.

\begin{definition}
We assume that Hypothesis~\ref{hyp} is satisfied.
\begin{enumerate}
 \item The {\em perturbed Krein Laplacian} on $\Gamma$ is the Krein--von Neumann extension
\begin{align*}
 - \Delta_{{\rm K}, \Gamma, q} := S_{\rm K}
\end{align*}
of $S$. 
 \item The \emph{perturbed standard Laplacian} on $\Gamma$ is the operator given by
\begin{align*}
 (- \Delta_{{\rm st},\Gamma,q} f)_e & = - f_e'' + q_e f_e \qquad \text{on each edge}~e \in \cE, \\
 \dom (- \Delta_{{\rm st},\Gamma,q}) & = \left\{ f \in \widetilde H^2 (\Gamma) \cap H^1 (\Gamma) : \partial_\nu f (v) = 0~\text{for each}~v \in \cV \right\}.
\end{align*}
\end{enumerate}
In the case that the potential $q$ is identically zero, we write $ - \Delta_{{\rm K}, \Gamma} := - \Delta_{{\rm K}, \Gamma,0}$ and $- \Delta_{{\rm st},\Gamma} := - \Delta_{{\rm st}, \Gamma, 0}$ and call these operators {\em Krein Laplacian} and \emph{standard Laplacian}, respectively.
\end{definition}

We point out that, in general, $ - \Delta_{{\rm K}, \Gamma, q} \ne - \Delta_{{\rm K}, \Gamma} + q$ (where we interpret the latter as an additive perturbation of the Krein Laplacian); see the discussion in Remark~\ref{rem:perturbation}. On the other hand, it holds that $- \Delta_{{\rm st},\Gamma,q}=- \Delta_{{\rm st},\Gamma} +q$, by definition.

In what follows, it will be useful to embed the study of $- \Delta_{{\rm K}, \Gamma, q}$ in the framework of boundary triples. The following proposition can be found in~\cite[Lemma~2.14 and Theorem~2.16]{EK14}; see also~\cite[Proposition~10.1]{BLLR18}. For the statement on the weighted discrete Laplacian, see e.g. Step 2 in the proof of \cite[Proposition 3.1]{GR20}.

\begin{proposition}\label{prop:graphBT}
Assume that Hypothesis \ref{hyp} is satisfied, and let $S$ be defined in~\eqref{eq:SV}. For $f \in \dom S^* = \widetilde H^2 (\Gamma) \cap H^1 (\Gamma)$, define
\begin{align*}
 \Gamma_0 f = \begin{pmatrix} f (v_1) \\ \vdots \\ f (v_V) \end{pmatrix} \quad \text{and} \quad \Gamma_1 f = \begin{pmatrix} - \partial_\nu f (v_1) \\ \vdots \\ - \partial_\nu f (v_V) \end{pmatrix},
\end{align*}
where $v_1, \dots, v_V$ is an enumeration of the vertices of $\Gamma$. Then $S$ is a closed operator and $\{ \C^V, \Gamma_0, \Gamma_1\}$ is a boundary triple for $S^*$; in particular, $S$ has defect numbers
\begin{align}\label{eq:defectV}
 n_- = n_+ = V.
\end{align}
The corresponding extensions $A$ and $B$ of $S$ defined in~\eqref{eq:AB} are given by
\begin{align*}
 A = - \Delta_{{\rm D}, \Gamma, q} \quad \text{and} \quad B = - \Delta_{{\rm st}, \Gamma, q};
\end{align*}
in particular, $0 \in \rho (A)$. The value of the corresponding Weyl function at $\lambda = 0$ is $M (0) = - \Lambda_q$, where $\Lambda_q$ is the {\em Dirichlet-to-Neumann matrix} defined via the relation
\begin{align}\label{eq:DN}
 \begin{pmatrix} \partial_\nu f_* (v_1) \\ \vdots \\ \partial_\nu f_* (v_V) \end{pmatrix} = \Lambda_q \begin{pmatrix} f_* (v_1) \\ \vdots \\ f_* (v_V) \end{pmatrix},
\end{align}
where $f_* \in \ker S^*$ is arbitrary. In the potential-free case, $q = 0$ identically, the value of the corresponding Weyl function is $M (0) = - \Lambda_0 = - L$, where $L$ is the weighted discrete Laplacian $L$ defined as
\begin{align}\label{eq:L}
 L_{i, j} = \begin{cases} - \sum_{e~\text{connects $v_i$ and $v_j$}} \frac{1}{L (e)} & \text{if}~v_i~\text{and}~v_j~\text{are adjacent}, i \neq j,\\ 0 & \text{if}~v_i, v_j~\text{are not adjacent}, \\ \sum_{e \in \cE (v_i),~e~\text{no loop}} \frac{1}{L (e)} & \text{if}~i = j.  \end{cases}
\end{align}
\end{proposition}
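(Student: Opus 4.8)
The plan is to verify the two defining axioms of a boundary triple directly from the structure of $S^*$, and then identify the extensions $A$, $B$ and the Weyl function value $M(0)$ by a short computation with $\ker S^*$. First I would recall that $\dom S^* = \widetilde H^2(\Gamma)\cap H^1(\Gamma)$, as already recorded above from integration by parts edge-by-edge, and that $S$ is closed (this follows since $S^{**}=S$, $S^*$ having been computed). For the Green identity, I would integrate by parts on each edge: for $f,g\in\dom S^*$,
\begin{align*}
 (S^*f,g)-(f,S^*g) = \sum_{e\in\cE}\int_0^{\ell(e)}\big(-f_e''\,\overline{g_e}+f_e\,\overline{g_e''}\big)\,\dd x = \sum_{e\in\cE}\big[f_e\,\overline{g_e'}-f_e'\,\overline{g_e}\big]_0^{\ell(e)},
\end{align*}
the potential terms cancelling since $q$ is real. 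Collecting the boundary contributions at each vertex $v$, and using that $f,g$ are continuous at $v$ so the traces $f(v)$, $g(v)$ are well-defined, the sum over edge endpoints incident to $v$ turns the derivative terms into $\pm\partial_\nu f(v)$, $\pm\partial_\nu g(v)$; one checks the signs so that the total equals $\sum_v\big(f(v)\overline{(-\partial_\nu g(v))}-(-\partial_\nu f(v))\overline{g(v)}\big) = (\Gamma_1 f,\Gamma_0 g)_{\C^V}-(\Gamma_0 f,\Gamma_1 g)_{\C^V}$. This is the abstract Green identity.

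For surjectivity of $\{\Gamma_0,\Gamma_1\}:\dom S^*\to\C^V\times\C^V$, given arbitrary vectors $a=(a_i)$, $b=(b_i)$ in $\C^V$ I would construct $f\in\widetilde H^2(\Gamma)\cap H^1(\Gamma)$ locally: on each edge choose a smooth function whose values and (inward) derivatives at the two endpoints prescribe, vertex by vertex, the correct contributions; the continuity constraint is exactly matched by giving $f_e$ the value $a_i$ at any endpoint attached to $v_i$, and then distributing the derivative data among the edges at $v_i$ so that their signed sum is $-b_i$. Since at each vertex of degree $d\ge 1$ we have $d$ free derivative parameters and only one linear constraint, this is always possible, and a standard bump-function interpolation on each interval produces such an $f_e\in H^2(0,\ell(e))$. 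Hence condition (i) holds, and $\{\C^V,\Gamma_0,\Gamma_1\}$ is a boundary triple. The defect number formula \eqref{eq:defectV} then follows because $\dim\ker(S^*\mp\ui)=\dim\cG=V$ for a boundary triple.

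Next, the identifications $A=S^*\upharpoonright\ker\Gamma_0$ and $B=S^*\upharpoonright\ker\Gamma_1$ are immediate from the vertex conditions: $\ker\Gamma_0$ means $f(v)=0$ at all $v$, which is $\dom(-\Delta_{\mathrm D,\Gamma,q})$, and $\ker\Gamma_1$ means $\partial_\nu f(v)=0$ at all $v$, together with the continuity already built into $\dom S^*$, which is $\dom(-\Delta_{\mathrm{st},\Gamma,q})$. That $0\in\rho(A)$ is exactly $\min\sigma(-\Delta_{\mathrm D,\Gamma,q})\ge\mu>0$, already established above. Finally, for the Weyl function at $\lambda=0$: by definition $M(0)\Gamma_0 f=\Gamma_1 f$ for $f\in\ker S^*$. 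Take $f_*\in\ker S^*$; then $\Gamma_0 f_*=(f_*(v_i))_i$ and $\Gamma_1 f_*=(-\partial_\nu f_*(v_i))_i$, so $M(0)$ is precisely the map sending the boundary values of a solution of $-f''+qf=0$ to the negative of its sum-of-normal-derivatives, i.e.\ $M(0)=-\Lambda_q$ with $\Lambda_q$ as in \eqref{eq:DN}. In the potential-free case, a solution of $-f_e''=0$ on an edge $e$ of length $\ell(e)$ between $v_i$ and $v_j$ is affine, with derivative $(f_*(v_j)-f_*(v_i))/\ell(e)$ in the direction away from $v_i$; summing the inward derivatives at $v_i$ over all incident edges gives exactly the weighted discrete Laplacian $L$ of \eqref{eq:L} applied to $(f_*(v_k))_k$ (loops contribute zero since their two endpoint contributions cancel), so $\Lambda_0=L$ and $M(0)=-L$. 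I expect the only mildly delicate point to be getting all the signs and endpoint conventions consistent in the Green identity and in the definition of $\partial_\nu$ for loops; the surjectivity and the discrete-Laplacian computation are routine once the conventions are fixed.
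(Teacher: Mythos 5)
The paper offers no proof of this proposition at all: it is imported wholesale from the literature (\cite[Lemma~2.14 and Theorem~2.16]{EK14}, \cite[Proposition~10.1]{BLLR18}, and \cite[Proposition 3.1]{GR20} for the discrete-Laplacian statement). Your direct verification is essentially the standard argument that those references carry out, and it is correct: the edgewise integration by parts with the sign bookkeeping for $\partial_\nu$ (inward derivative $-f_e'(0)$ at the endpoint $0$, $+f_e'(\ell(e))$ at the endpoint $\ell(e)$) does produce $(\Gamma_1 f,\Gamma_0 g)-(\Gamma_0 f,\Gamma_1 g)$; surjectivity works exactly as you say, since at each vertex of degree $d$ the continuity constraint fixes the common value and the single Kirchhoff-sum constraint leaves $d-1\ge 0$ degrees of freedom among $d$ derivative parameters; $n_\pm=\dim\cG$ is a standard boundary-triple fact; the identifications of $A$ and $B$ and the reading-off of $M(0)=-\Lambda_q$ are immediate (the latter being well defined on $\ker S^*$ precisely because $0\in\rho(A)$, which you correctly trace back to $\min\sigma(-\Delta_{{\rm D},\Gamma,q})\ge\mu>0$); and the affine-solution computation giving $\Lambda_0=L$, with loops contributing zero, matches \eqref{eq:L}.

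The one place where your argument is glib is the closedness of $S$: writing ``$S^{**}=S$, $S^*$ having been computed'' is not a proof, since knowing $S^*$ tells you nothing about $S^{**}$ until you also compute $(S^*)^*$ and check that the vertex conditions $f(v)=\partial_\nu f(v)=0$ reappear. The cleaner route is to note that the graph norm of $S$ is equivalent to the $\widetilde H^2(\Gamma)$ norm on $\dom S$ (a one-dimensional elliptic estimate on each edge, using boundedness of $q$), and that $\dom S$ is the intersection of $\widetilde H^2(\Gamma)\cap H^1(\Gamma)$ with the kernels of finitely many trace functionals $f\mapsto f(v)$, $f\mapsto\partial_\nu f(v)$ that are continuous in that norm; hence $\dom S$ is complete under the graph norm and $S$ is closed. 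With that repair the proof is complete.
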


This proposition allows us to describe the domain of $- \Delta_{{\rm K}, \Gamma, q}$ in terms of its vertex conditions and to obtain some properties of the perturbed Krein Laplacian right away. The next proposition follows immediately from Proposition~\ref{prop:graphBT} together with Proposition~\ref{prop:KreinBT}. Furthermore, from \eqref{eq:defectV} we obtain the multiplicity of the zero eigenvalue of $- \Delta_{{\rm K}, \Gamma, q}$.

\begin{proposition}\label{prop:KreinV}
Under Hypothesis \ref{hyp} the perturbed Krein Laplacian acts as
\begin{align*}
 \big( - \Delta_{{\rm K}, \Gamma, q} f \big)_e = - f_e'' + q_e f_e \qquad \text{on each edge}~e \in \cE,
\end{align*}
and its domain consists of all $f \in \widetilde H^2 (\Gamma) \cap H^1 (\Gamma)$ such that
\begin{align*}
 \begin{pmatrix} \partial_\nu f (v_1) \\ \vdots \\ \partial_\nu f (v_V) \end{pmatrix} = \Lambda_q \begin{pmatrix} f (v_1) \\ \vdots \\ f (v_V) \end{pmatrix},
\end{align*}
where $\Lambda_q$ is the Dirichlet-to-Neumann matrix defined in~\eqref{eq:DN}. Moreover,
\begin{align}\label{eq:Vkernel}
 \dim \ker \big( - \Delta_{{\rm K}, \Gamma, q} \big) = \dim\ker S^* = V.
\end{align}
In the potential-free case $q = 0$ identically, the domain of $- \Delta_{{\rm K},\Gamma}$ consists of all $f \in \widetilde H^2 (\Gamma) \cap H^1 (\Gamma)$ which satisfy the vertex conditions
\begin{align}\label{eq:KreinConditionsVLaplace}
 \begin{pmatrix} \partial_\nu f (v_1) \\ \vdots \\ \partial_\nu f (v_V) \end{pmatrix} = L \begin{pmatrix} f (v_1) \\ \vdots \\ f (v_V) \end{pmatrix},
\end{align}
where $L$ is the weighted discrete Laplacian in \eqref{eq:L}. 
\end{proposition}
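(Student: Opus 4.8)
The plan is to obtain everything by feeding the explicit boundary triple of Proposition~\ref{prop:graphBT} into the abstract characterisation of $S_{\rm K}$ from Proposition~\ref{prop:KreinBT}. First I would record that, by Proposition~\ref{prop:graphBT}, the triple $\{\C^V, \Gamma_0, \Gamma_1\}$ with $\Gamma_0 f = (f(v_1), \dots, f(v_V))^\top$ and $\Gamma_1 f = (-\partial_\nu f(v_1), \dots, -\partial_\nu f(v_V))^\top$ is a boundary triple for $S^*$, that $A = S^* \upharpoonright \ker \Gamma_0 = -\Delta_{{\rm D},\Gamma,q}$ satisfies $0 \in \rho(A)$ (being the Friedrichs extension, whose spectrum is bounded away from $0$ by \eqref{eq:mu}), and that the associated Weyl function takes the value $M(0) = -\Lambda_q$ at the origin. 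Since $0 \in \rho(A)$, Proposition~\ref{prop:KreinBT} applies and gives
\[
 -\Delta_{{\rm K},\Gamma,q} = S_{\rm K} = S^* \upharpoonright \big\{ f \in \dom S^* : \Gamma_1 f = M(0) \Gamma_0 f \big\}.
\]

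Next I would simply unwind this identity. Substituting the explicit forms of $\Gamma_0$, $\Gamma_1$ and the value $M(0) = -\Lambda_q$, the boundary condition $\Gamma_1 f = M(0)\Gamma_0 f$ reads $-(\partial_\nu f(v_1),\dots,\partial_\nu f(v_V))^\top = -\Lambda_q (f(v_1),\dots,f(v_V))^\top$, which is exactly the asserted vertex condition after cancelling the common minus sign; here $\dom S^* = \widetilde H^2(\Gamma) \cap H^1(\Gamma)$ by the description of $S^*$ preceding Proposition~\ref{prop:graphBT}. The action of $-\Delta_{{\rm K},\Gamma,q}$ as $-f_e'' + q_e f_e$ on each edge is immediate from $S_{\rm K} \subset S^*$. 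The potential-free claim is then the special case $\Lambda_0 = L$ from Proposition~\ref{prop:graphBT}, which yields the conditions \eqref{eq:KreinConditionsVLaplace} with the weighted discrete Laplacian $L$ of \eqref{eq:L}.

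For the kernel I would invoke the general fact recorded in Section~\ref{sec:abstractKrein} that $\ker S_{\rm K} = \ker S^*$ — this holds because $0$ is not an eigenvalue of $S$ (by \eqref{eq:mu}), so in the decomposition $\dom S_{\rm K} = \dom S \dotplus \ker S^*$ of \eqref{eq:AK} the only elements of $\dom S$ annihilated by $S^*$ are $0$ — and then read off $\dim \ker S^* = n_+ = V$ from the defect-number computation \eqref{eq:defectV}. This gives \eqref{eq:Vkernel}.

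I do not expect a genuine obstacle: the entire argument is a translation of the abstract boundary-triple identities into the concrete metric-graph language, and the only point requiring (minimal) care is tracking the two minus signs — one in the definition of $\Gamma_1$ and one in $M(0) = -\Lambda_q$ — which cancel to produce the stated conditions.
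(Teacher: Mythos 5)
Your argument is correct and is precisely the paper's own proof: the authors likewise obtain the vertex conditions by substituting the boundary triple of Proposition~\ref{prop:graphBT} (with $M(0) = -\Lambda_q$, respectively $-L$) into Proposition~\ref{prop:KreinBT}, and deduce \eqref{eq:Vkernel} from $\ker S_{\rm K} = \ker S^*$ together with the defect-number count \eqref{eq:defectV}. Nothing is missing.
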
 

\begin{remark}
The vertex conditions of $- \Delta_{{\rm K}, \Gamma, q}$ are nonlocal, i.e.\ they couple values of the  function and its derivatives at different vertices. In the potential-free case it actually follows from \eqref{eq:KreinConditionsVLaplace} that the vertex conditions of the Krein Laplacian couple each vertex with all of its neighbours.
\end{remark}

We demonstrate the application of Proposition \ref{prop:KreinV} to the calculation of the Krein Laplacian for the interval --- a very standard example --- and a flower graph.

\begin{example}\label{ex:interval}
Let $\Gamma= [0, \ell]$ be an interval, i.e.\ a graph consisting of two vertices and one edge between them. On this graph, the weighted discrete Laplacian $L$ defined in~\eqref{eq:L} equals
\begin{align*}
 L = \frac{1}{\ell} \begin{pmatrix} 1 & -1 \\ -1 & 1 \end{pmatrix}
\end{align*}
and the vertex condition for the Krein Laplacian $-\Delta_{{\rm K},\Gamma}$ as described in Proposition~\ref{prop:KreinV} can be rewritten
\begin{align*}
 f' (0) = f' (\ell), \qquad f (\ell) = f (0) + \ell f' (0).
\end{align*}
\end{example}

Our second example shows that the Krein Laplacian and the standard Laplacian may coincide in some cases; cf.\ Corollary~\ref{cor:flower} below.

\begin{example}\label{ex:flower}
Let $\Gamma$ be a flower graph, i.e.\ a graph with one vertex and $E$ loops attached to it; special cases are loops ($E = 1$) and figure-8 graphs ($E = 2$); cf.\ Figure \ref{fig:flower}.
\begin{figure}[h]
\begin{center}
\begin{minipage}[c]{4cm}
\begin{tikzpicture}[scale=0.6]
\begin{polaraxis}[grid=none, axis lines=none]
\addplot[very thick,mark=none,domain=0:360,samples=300] {abs(cos(5*x/2))};
\draw[fill] (0,0) circle (2.5pt);
\end{polaraxis}
\end{tikzpicture}
\end{minipage}\hspace{-1cm}
\begin{minipage}[c]{4cm}
\begin{tikzpicture}[scale=0.6]
\begin{polaraxis}[grid=none, axis lines=none]
\addplot[very thick,mark=none,domain=-90:90,samples=300] {cos(x)^2};
\draw[fill] (0,0) circle (2.5pt);
\end{polaraxis}
\end{tikzpicture}
\end{minipage}\hspace{0.8cm}
\begin{minipage}[c]{4cm}
\begin{tikzpicture}[scale=0.6]
\begin{polaraxis}[grid=none, axis lines=none]
\addplot[very thick,mark=none,domain=0:360,samples=300] {cos(x)^2};
\draw[fill] (0,0) circle (2.5pt);
\end{polaraxis}
\end{tikzpicture}
\end{minipage}
\end{center}
\caption{A ``generic'' flower graph and two special cases, the loop and the figure-8 graph.}
\label{fig:flower}
\end{figure}
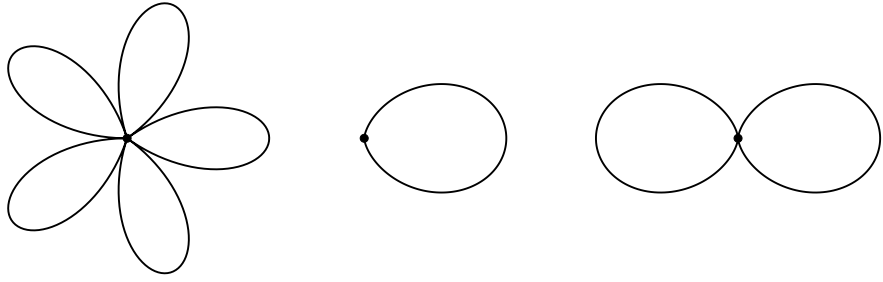

Then any function $f_*$ which is harmonic on every edge and belongs to~$H^1 (\Gamma)$ is necessarily constant on all of $\Gamma$. Thus each $f \in \dom (- \Delta_{{\rm K},\Gamma})$ satisfies $f = f_S + c$ with $f_S \in \dom S$ and $c$ constant; in particular, $f \in \dom (- \Delta_{{\rm st},\Gamma})$, the domain of the standard Laplacian on $\Gamma$. As both $- \Delta_{{\rm K},\Gamma}$ and $- \Delta_{{\rm st},\Gamma}$ are self-adjoint operators, they coincide, $- \Delta_{{\rm K},\Gamma} = - \Delta_{{\rm st},\Gamma}$, on any flower graph $\Gamma$.
\end{example}

Next we compare the perturbed Krein Laplacian with the perturbed Dirichlet Laplacian and the perturbed standard Laplacian. We apply Proposition~\ref{prop:KreinV} and Corollary~\ref{cor:KreinB} to the boundary triple in Proposition~\ref{prop:graphBT} and get the following result.

\begin{theorem}\label{thm:resDiffGraph}
Assume that Hypothesis \ref{hyp} is satisfied. Let $\lambda \mapsto \gamma (\lambda)$ and $\lambda \mapsto M (\lambda)$ be the $\gamma$-field and Weyl function respectively corresponding to the boundary triple in Proposition~\ref{prop:graphBT}.
\begin{enumerate}
 \item For $\lambda \in \rho (- \Delta_{{\rm K}, \Gamma, q}) \cap \rho (- \Delta_{{\rm D}, \Gamma, q})$, the formula
\begin{align*}
  \big(- \Delta_{{\rm K}, \Gamma, q} - \lambda \big)^{-1} - \big(- \Delta_{{\rm D}, \Gamma, q} - \lambda \big)^{-1} = -\gamma (\lambda) \big( \Lambda_q + M (\lambda) \big)^{-1} \gamma (\overline \lambda)^*
 \end{align*}
holds. In particular, 
\begin{align*}
 \dim \ran \Big[ \big(- \Delta_{{\rm K}, \Gamma, q} - \lambda \big)^{-1} & - \big(- \Delta_{{\rm D}, \Gamma, q} - \lambda \big)^{-1} \Big]  = V.
\end{align*}

\item For $\lambda \in \rho (- \Delta_{{\rm K}, \Gamma, q}) \cap \rho(- \Delta_{{\rm st}, \Gamma, q}) \cap \rho (- \Delta_{{\rm D}, \Gamma, q})$, the formula
\begin{align*}
  \big(- \Delta_{{\rm K}, \Gamma, q} - \lambda \big)^{-1} - \big(- \Delta_{{\rm st}, \Gamma, q} - \lambda \big)^{-1} = \gamma (\lambda) \big( \Lambda_q + M (\lambda) \big)^{-1} \Lambda_q M (\lambda)^{-1} \gamma (\overline \lambda)^*
 \end{align*}
holds. In particular, 
\begin{align*}
 \dim \ran \Big[ \big(- \Delta_{{\rm K}, \Gamma, q} - \lambda \big)^{-1} & - \big(- \Delta_{{\rm st}, \Gamma, q} - \lambda \big)^{-1} \Big] \\
 & = \dim \ran \Lambda_q = \begin{cases} V - 1 & \text{if}~q = 0~\text{identically}, \\ V, & \text{else}. \end{cases}
\end{align*}
\end{enumerate}
\end{theorem}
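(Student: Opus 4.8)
The plan is to specialise the abstract machinery of Subsection~2.3 to the concrete boundary triple $\{\C^V,\Gamma_0,\Gamma_1\}$ furnished by Proposition~\ref{prop:graphBT}. For that triple one has $A=-\Delta_{{\rm D},\Gamma,q}$, $B=-\Delta_{{\rm st},\Gamma,q}$, $S_{\rm K}=-\Delta_{{\rm K},\Gamma,q}$, the assumption $0\in\rho(A)$ is satisfied, and the value of the Weyl function at $\lambda=0$ is $M(0)=-\Lambda_q$. Thus Propositions~\ref{prop:Krein} and Corollary~\ref{cor:KreinB} apply verbatim, and the whole proof amounts to substituting $M(0)=-\Lambda_q$ and doing the (easy) sign bookkeeping, plus one genuinely substantive computation, namely the evaluation of $\dim\ran\Lambda_q$.

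For assertion~(i), I substitute $M(0)=-\Lambda_q$ into the resolvent formula~\eqref{eq:KreinA} of Proposition~\ref{prop:Krein}(i). Since $\bigl(-\Lambda_q-M(\lambda)\bigr)^{-1}=-\bigl(\Lambda_q+M(\lambda)\bigr)^{-1}$, this gives exactly the claimed identity, valid for all $\lambda\in\rho(-\Delta_{{\rm K},\Gamma,q})\cap\rho(-\Delta_{{\rm D},\Gamma,q})$. The rank statement is then immediate from Corollary~\ref{cor:KreinB}(i), which yields $\dim\ran[\,\cdot\,]=\dim\cG=V$.

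For assertion~(ii), I substitute $M(0)=-\Lambda_q$ into~\eqref{eq:KreinB} of Proposition~\ref{prop:Krein}(ii); the two minus signs coming from $\bigl(-\Lambda_q-M(\lambda)\bigr)^{-1}$ and from $M(0)=-\Lambda_q$ combine to give $\bigl(\Lambda_q+M(\lambda)\bigr)^{-1}\Lambda_q M(\lambda)^{-1}$, i.e. the stated formula, on $\rho(-\Delta_{{\rm K},\Gamma,q})\cap\rho(-\Delta_{{\rm st},\Gamma,q})\cap\rho(-\Delta_{{\rm D},\Gamma,q})$, where $M(\lambda)^{-1}\in\cB(\C^V)$ exists by the same proposition. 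By Corollary~\ref{cor:KreinB}(ii) the rank of this resolvent difference equals $\dim\ran M(0)=\dim\ran\Lambda_q$, so everything reduces to computing $\dim\ran\Lambda_q$.

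This last computation is the only real obstacle, though not a serious one. The point is that $f_*\mapsto(f_*(v_1),\dots,f_*(v_V))$ is a bijection from $\ker S^*$ onto $\C^V$ (which is why $n_\pm=V$, using $0\in\rho(-\Delta_{{\rm D},\Gamma,q})$ for injectivity), and under this bijection and the defining relation~\eqref{eq:DN}, $\ker\Lambda_q$ corresponds precisely to those $f_*\in\ker S^*$ with $\partial_\nu f_*(v)=0$ at all vertices, i.e. to $\ker(-\Delta_{{\rm st},\Gamma,q})\subseteq\ker S^*$. Hence $\dim\ran\Lambda_q=V-\dim\ker(-\Delta_{{\rm st},\Gamma,q})$. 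If $q\equiv 0$, then $-\Delta_{{\rm st},\Gamma}f=0$ forces $\int_\Gamma|f'|^2=0$, so by connectedness $f$ is constant; thus $\ker(-\Delta_{{\rm st},\Gamma})$ is one-dimensional and $\dim\ran\Lambda_0=V-1$ (equivalently, $\Lambda_0=L$ is the Laplacian of a connected weighted graph, with kernel spanned by the all-ones vector). If $q$ is not identically zero, then $-\Delta_{{\rm st},\Gamma,q}f=0$ forces $\int_\Gamma|f'|^2+\int_\Gamma q|f|^2=0$, whence $f\equiv c$ and $|c|^2\int_\Gamma q=0$, so $c=0$; thus $\Lambda_q$ is invertible and $\dim\ran\Lambda_q=V$. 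Combining the two cases yields the asserted case distinction, and the only thing to watch throughout is the sign bookkeeping from $M(0)=-\Lambda_q$ together with the inclusion $\ker(-\Delta_{{\rm st},\Gamma,q})\subseteq\ker S^*$ that makes the identification of $\ker\Lambda_q$ legitimate.
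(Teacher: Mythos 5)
Your proposal is correct and follows essentially the same route as the paper: both reduce the theorem to substituting $M(0)=-\Lambda_q$ into Proposition~\ref{prop:Krein} and Corollary~\ref{cor:KreinB}, with the only substantive step being the computation of $\dim\ran\Lambda_q$ via the identification of $\ker\Lambda_q$ with $\ker(-\Delta_{{\rm st},\Gamma,q})$ and the variational argument forcing $f$ to be constant (and then zero when $q\not\equiv0$). The only cosmetic difference is that for $q\equiv0$ the paper simply cites the well-known fact that the connected weighted discrete Laplacian $L$ has a one-dimensional kernel, whereas you rederive it from the same variational identity; both are fine.
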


\begin{proof}
The only assertion to prove is that $\ran \Lambda_q$ has the dimension claimed in the theorem. For the potential-free case, where $\Lambda_q = L$, the weighted discrete Laplacian, it is well-known that the kernel is one-dimensional (consisting of the constant vectors), and hence its range has dimension $V - 1$. Now let $q \geq 0$ be a nontrivial function, and let $\phi \in \ker \Lambda_q$. Then by definition, there exists a unique $f \in \ker S^*$ such that $\Gamma_0 f = \phi$ and $\Gamma_1 f = 0$, i.e.\ $f \in \ker S^* \cap \dom (- \Delta_{{\rm st}, \Gamma, q})$. In other words, $f \in \ker (- \Delta_{{\rm st}, \Gamma, q})$. But then, by standard variational principles,
\begin{align*}
 0 = \int_\Gamma |f'|^2 \dd x + \int_\Gamma q |f|^2 \dd x.
\end{align*}
Since both terms on the right-hand side are nonnegative, they are zero separately. From $\int_\Gamma |f'|^2 \dd x = 0$, it follows that $f$ is constant on each edge and, by continuity, constant on $\Gamma$. But then $\int_\Gamma q |f|^2 \dd x = 0$ yields $f = 0$ identically, as $q$ is nontrivial. Finally, $\phi = \Gamma_0 f = 0$, so that $\ker \Lambda_q = \{0\}$. Consequently, $\dim \ran \Lambda_q = V$, which yields the desired result.
\end{proof}

Now the observation of Example~\ref{ex:flower} can be sharpened in the following way. Since flower graphs are the only graphs with $V = 1$, this is an immediate consequence of Theorem~\ref{thm:resDiffGraph}.

\begin{corollary}\label{cor:flower}
Under Hypothesis \ref{hyp}, the following statements are equivalent.
\begin{enumerate}
 \item The perturbed Krein Laplacian and the perturbed standard Laplacian coincide, i.e.\ $- \Delta_{{\rm K}, \Gamma, q} = - \Delta_{{\rm st},\Gamma, q}$;
 \item $\Gamma$ is a flower graph and $q = 0$ identically.
\end{enumerate}
\end{corollary}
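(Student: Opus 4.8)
The plan is to deduce the corollary directly from the rank formula in Theorem~\ref{thm:resDiffGraph}(ii), using the elementary fact that two self-adjoint operators coincide if and only if their resolvents agree at a single point of a common resolvent set. First I would record that, since $-\Delta_{{\rm D},\Gamma,q}$, $-\Delta_{{\rm st},\Gamma,q}$ and $-\Delta_{{\rm K},\Gamma,q}$ all have purely discrete spectrum (the first by the properties stated after \eqref{eq:SV}, the latter two by Theorem~\ref{thm:minMaxKrein} applied to the appropriate symmetric operators, or by the resolvent formulae of Theorem~\ref{thm:resDiffGraph}), the intersection $\rho(-\Delta_{{\rm K},\Gamma,q})\cap\rho(-\Delta_{{\rm st},\Gamma,q})\cap\rho(-\Delta_{{\rm D},\Gamma,q})$ is the complement of a finite set and hence nonempty; fix such a $\lambda$.

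For the implication (ii)$\Rightarrow$(i): if $\Gamma$ is a flower graph then $V=1$, and if moreover $q=0$ identically then Theorem~\ref{thm:resDiffGraph}(ii) gives
\[
 \dim\ran\Big[\big(-\Delta_{{\rm K},\Gamma}-\lambda\big)^{-1}-\big(-\Delta_{{\rm st},\Gamma}-\lambda\big)^{-1}\Big]=V-1=0 ,
\]
so the two resolvents agree at $\lambda$ and therefore the operators agree. (This is also precisely the content of Example~\ref{ex:flower}, which could be cited instead.)

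For (i)$\Rightarrow$(ii): if $-\Delta_{{\rm K},\Gamma,q}=-\Delta_{{\rm st},\Gamma,q}$ then the resolvent difference at $\lambda$ is the zero operator, so the dimension of its range is $0$. By Theorem~\ref{thm:resDiffGraph}(ii) this dimension equals $V$ when $q\not\equiv 0$ and equals $V-1$ when $q\equiv 0$. Since $V\geq 1$ for every (connected, nonempty) metric graph, the case $q\not\equiv 0$ is impossible; hence $q\equiv 0$ and $V-1=0$, that is, $V=1$. Finally, a finite connected metric graph with exactly one vertex necessarily has every edge incident to that vertex at both endpoints, i.e.\ every edge is a loop, which is exactly the definition of a flower graph; this yields (ii).

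I do not anticipate a genuine obstacle here, as all the analytic content is already packaged in Theorem~\ref{thm:resDiffGraph}; the proof is essentially a logical unwinding of the rank formula. The only points needing a word of care are the nonemptiness of the common resolvent set (handled via discreteness of the spectra), the passage from equality of a single resolvent to equality of the self-adjoint operators, and the purely combinatorial observation that, for connected finite metric graphs, ``$V=1$'' is synonymous with ``flower graph''.
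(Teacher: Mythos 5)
Your proposal is correct and follows exactly the paper's own route: the corollary is deduced as an immediate consequence of the rank formula in Theorem~\ref{thm:resDiffGraph}(ii) together with the observation that flower graphs are precisely the connected graphs with $V=1$. The additional details you supply (nonemptiness of the common resolvent set and the passage from a vanishing resolvent difference to equality of the operators) are sound and merely make explicit what the paper leaves implicit.
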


%
%

Theorem~\ref{thm:resDiffGraph} allows us to deduce eigenvalue asymptotics for the perturbed Krein Laplacian: for $\lambda \geq 0$ denote by 
\begin{align*}
 \mathcal{N} \big(\lambda; - \Delta_{\bullet, \Gamma, q} \big) := \big| \sigma \big( - \Delta_{\bullet, \Gamma, q} \big) \cap (-\infty,\lambda] \big|, \qquad \bullet = {\rm K, D, st},
\end{align*}
 the \emph{eigenvalue-counting function} of the respective operator, that is, the number of eigenvalues up to $\lambda$. Under Hypothesis~\ref{hyp}, it follows immediately from Theorem~\ref{thm:resDiffGraph} and the minimality property of the Krein--von Neumann extension that
\begin{align}
 \mathcal{N} \big(\lambda;- \Delta_{{\rm D}, \Gamma,q} \big) & \leq \mathcal{N}\big(\lambda;- \Delta_{{\rm K}, \Gamma,q}\big) \leq \mathcal{N} \big(\lambda;- \Delta_{{\rm D}, \Gamma,q} \big)+V, \label{eq:countDV}\\
 \mathcal{N} \big(\lambda;- \Delta_{{\rm st}, \Gamma,q} \big) & \leq \mathcal{N}\big(\lambda;- \Delta_{{\rm K}, \Gamma,q} \big) \leq \mathcal{N} \big(\lambda;- \Delta_{{\rm st}, \Gamma,q} \big)+V.\nonumber
\end{align}
In the case that $q$ is identically zero on $\Gamma$, the latter inequality may be strengthened,
\begin{align}\label{eq:countstVLaplace}
 \mathcal{N} \big(\lambda;- \Delta_{{\rm st}, \Gamma} \big) & \leq \mathcal{N}\big(\lambda;- \Delta_{{\rm K}, \Gamma} \big) \leq \mathcal{N} \big(\lambda;- \Delta_{{\rm st}, \Gamma} \big) + V - 1.
\end{align}
Morover, one can use the inequalities for $- \Delta_{{\rm D}, \Gamma}$ in this case to deduce the following.

\begin{corollary}\label{cor:VWeyl}
In the case of zero potential $q\equiv0$, for any $\lambda\geq0$,
\begin{equation*} 
 \frac{\ell(\Gamma)}{\pi}\sqrt{\lambda}-E \leq \mathcal{N} \big(\lambda;- \Delta_{{\rm K}, \Gamma} \big) \leq \frac{\ell(\Gamma)}{\pi}\sqrt{\lambda}+V.
\end{equation*}
\end{corollary}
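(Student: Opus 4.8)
The plan is to deduce Corollary~\ref{cor:VWeyl} from the already-established eigenvalue-counting inequalities by invoking the classical Weyl asymptotics (in their non-asymptotic, two-sided form) for the Dirichlet Laplacian on a metric graph. First I would recall from~\eqref{eq:countDV} that, for zero potential,
\begin{align*}
 \mathcal{N}\big(\lambda; -\Delta_{{\rm D},\Gamma}\big) \leq \mathcal{N}\big(\lambda; -\Delta_{{\rm K},\Gamma}\big) \leq \mathcal{N}\big(\lambda; -\Delta_{{\rm D},\Gamma}\big) + V,
\end{align*}
so it suffices to produce matching bounds for $\mathcal{N}\big(\lambda; -\Delta_{{\rm D},\Gamma}\big)$: a lower bound of the form $\frac{\ell(\Gamma)}{\pi}\sqrt{\lambda} - E$ and an upper bound of the form $\frac{\ell(\Gamma)}{\pi}\sqrt{\lambda}$.

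The key computation is the explicit spectral description~\eqref{eq:VDirichleteigenvalues}: since the Dirichlet Laplacian decouples over the edges, $\mathcal{N}\big(\lambda; -\Delta_{{\rm D},\Gamma}\big) = \sum_{e \in \cE} \#\big\{k \in \N : k^2\pi^2/\ell(e)^2 \leq \lambda\big\} = \sum_{e \in \cE} \big\lfloor \tfrac{\ell(e)}{\pi}\sqrt{\lambda} \big\rfloor$. Using the elementary bounds $x - 1 < \lfloor x \rfloor \leq x$ for $x \geq 0$ and summing over the $E$ edges, together with $\sum_{e\in\cE}\ell(e) = \ell(\Gamma)$, gives immediately
\begin{align*}
 \frac{\ell(\Gamma)}{\pi}\sqrt{\lambda} - E \;<\; \mathcal{N}\big(\lambda; -\Delta_{{\rm D},\Gamma}\big) \;\leq\; \frac{\ell(\Gamma)}{\pi}\sqrt{\lambda}.
\end{align*}
Combining the left inequality here with the left inequality of~\eqref{eq:countDV} yields the lower bound in the corollary, and combining the right inequality here with the right inequality of~\eqref{eq:countDV} yields the upper bound. (One should be slightly careful with the boundary case where $\sqrt{\lambda}\,\ell(e)/\pi$ is an integer, i.e.\ $\lambda$ is a Dirichlet eigenvalue, but the inequality $\lfloor x \rfloor \leq x$ covers this and the strict lower bound is harmless; alternatively one notes $\mathcal{N}$ counts eigenvalues including multiplicities, and~\eqref{eq:VDirichleteigenvalues} already records the multiplicities, so the floor-function count is exact.)

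There is essentially no obstacle here: the content is entirely in the explicit formula~\eqref{eq:VDirichleteigenvalues} and the resolvent-rank bound of Theorem~\ref{thm:resDiffGraph}, both already proved. The only point requiring a line of care is making the floor-function estimate rigorous at its endpoints and confirming that the $+V$ shift from~\eqref{eq:countDV} is exactly what converts the Dirichlet bounds into the stated Krein bounds. I would write this as a three-line proof: cite~\eqref{eq:VDirichleteigenvalues} to get $\mathcal{N}(\lambda;-\Delta_{{\rm D},\Gamma}) = \sum_{e}\lfloor \ell(e)\sqrt{\lambda}/\pi\rfloor$, apply $x-1 < \lfloor x\rfloor \le x$ edgewise and sum, then chain with~\eqref{eq:countDV}.
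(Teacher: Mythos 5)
Your proposal is correct and follows exactly the paper's own argument: compute $\mathcal{N}(\lambda;-\Delta_{{\rm D},\Gamma})=\sum_{e\in\cE}\lfloor \ell(e)\sqrt{\lambda}/\pi\rfloor$ from~\eqref{eq:VDirichleteigenvalues}, bound it between $\frac{\ell(\Gamma)}{\pi}\sqrt{\lambda}-E$ and $\frac{\ell(\Gamma)}{\pi}\sqrt{\lambda}$ via the floor-function estimates, and insert this into~\eqref{eq:countDV}. No gaps; your extra care about the boundary case is harmless but unnecessary.
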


\begin{proof}
It is a straightforward exercise to show that
\begin{equation*}
\mathcal{N}(\lambda;- \Delta_{{\rm D}, \Gamma}) = \sum_{e\in \cE} \left\lfloor \frac{\ell(e)}{\pi}\sqrt{\lambda}\right\rfloor
\end{equation*}
follows from \eqref{eq:VDirichleteigenvalues}. In particular, this implies
\begin{equation*}
 \frac{\ell(\Gamma)}{\pi}\sqrt{\lambda}-E \leq \mathcal{N}(\lambda;- \Delta_{{\rm D}, \Gamma}) \leq \frac{\ell(\Gamma)}{\pi}\sqrt{\lambda},
\end{equation*}
and then inserting this into \eqref{eq:countDV} yields the desired result.
\end{proof}

One can immediately deduce from Corollary~\ref{cor:VWeyl} that the eigenvalues for $- \Delta_{{\rm K}, \Gamma}$ possess the Weyl asymptotics
\begin{equation*}
\lambda_j(- \Delta_{{\rm K}, \Gamma}) \sim \left(\frac{j \pi}{\ell(\Gamma)}\right)^2
\end{equation*}
as $j\to\infty$. However, we remark that in fact any self-adjoint extension of the operator $S$ given by \eqref{eq:SV} possesses these same asymptotics.

In the following, we are going to state some eigenvalue inequalities for the perturbed Krein Laplacian. It follows directly from \eqref{eq:Vkernel} that
\begin{align*}
\lambda_{j + V} (- \Delta_{{\rm K}, \Gamma, q})=\lambda_j^+ (- \Delta_{{\rm K}, \Gamma, q})
\end{align*}
holds for all $j\in\mathbb{N}$. To investigate properties of the positive eigenvalues of $- \Delta_{{\rm K},\Gamma,q}$, we first formulate the abstract variational principle in Theorem~\ref{thm:minMaxKrein} in our specific situation.

\begin{theorem}\label{thm:minMaxKreinV}
If Hypothesis \ref{hyp} is satisfied, then the spectrum of $- \Delta_{{\rm K}, \Gamma, q}$ is purely discrete, and the positive eigenvalues 
\begin{align*}
 \lambda_1^+ \big( - \Delta_{{\rm K}, \Gamma, q} \big) \leq \lambda_2^+ \big( - \Delta_{{\rm K}, \Gamma, q} \big) \leq \dots
\end{align*}
of $- \Delta_{{\rm K}, \Gamma, q}$, counted with multiplicities, satisfy
\begin{align*}
 \lambda_j^+ \big( - \Delta_{{\rm K}, \Gamma, q} \big) = \min_{\substack{F \subset \widetilde H_0^2 (\Gamma) \\ \dim F = j}} \max_{\substack{f \in F \\ f \neq 0}} \frac{\int_\Gamma \left|- f'' + q f\right|^2 \dd x}{\int_\Gamma |f'|^2 \dd x + \int_\Gamma q |f|^2 \dd x}
\end{align*}
for all $j \in \N$. In particular, in the potential-free case $q = 0$ identically,
\begin{align*}
 \lambda_j^+ (- \Delta_{{\rm K},\Gamma}) = \min_{\substack{F \subset \widetilde H_0^2 (\Gamma) \\ \dim F = j}} \max_{\substack{f \in F \\ f \neq 0}} \frac{\int_\Gamma |f''|^2 \dd x}{\int_\Gamma |f'|^2 \dd x}
\end{align*}
holds for all $j \in \N$.
\end{theorem}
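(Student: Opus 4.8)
The plan is to obtain the statement as a direct specialisation of the abstract min--max principle, Theorem~\ref{thm:minMaxKrein}, to the operator $S$ from~\eqref{eq:SV}, whose Krein--von Neumann extension is $-\Delta_{{\rm K},\Gamma,q}$ by definition. Hypothesis~\ref{hyp:abs} has already been verified for this $S$ in the discussion following~\eqref{eq:SV}: it is closed by Proposition~\ref{prop:graphBT}, symmetric and densely defined, and satisfies $(Sf,f)\ge\mu\|f\|^2$ with $\mu=\pi^2/(\max_{e\in\cE}\ell(e))^2>0$. Hence the only remaining hypothesis of Theorem~\ref{thm:minMaxKrein} to check is that the embedding $\iota:\cH_S\to L^2(\Gamma)$ is compact, where $\cH_S=\dom S=\widetilde H_0^2(\Gamma)$ carries the norm $\|f\|_S=\|-f''+qf\|_{L^2(\Gamma)}$.

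For the compactness, I would argue as follows. Given $f\in\dom S$, set $g:=-f''+qf=Sf$; boundedness of $q$ gives $\|f''\|_{L^2(\Gamma)}\le\|g\|_{L^2(\Gamma)}+\|q\|_\infty\|f\|_{L^2(\Gamma)}$, and combining this with~\eqref{eq:alsoDoch} (which bounds $\|f\|_{L^2(\Gamma)}$ by a multiple of $\|f\|_S$) shows that the full $\widetilde H^2(\Gamma)$-norm of $f$ is controlled by a constant times $\|f\|_S$. Thus $\iota$ factors through the inclusion $\widetilde H^2(\Gamma)\hookrightarrow L^2(\Gamma)$, which is compact because $\Gamma$ is a finite metric graph --- finitely many edges, each a bounded interval, on which the Rellich--Kondrachov theorem applies. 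Therefore Theorem~\ref{thm:minMaxKrein} applies and yields that $\sigma(-\Delta_{{\rm K},\Gamma,q})$ is nonnegative and consists of isolated eigenvalues, the positive ones of finite multiplicity; together with $\dim\ker(-\Delta_{{\rm K},\Gamma,q})=V<\infty$ from~\eqref{eq:Vkernel} this shows the spectrum is purely discrete. It also gives
\[
 \lambda_j^+\big(-\Delta_{{\rm K},\Gamma,q}\big)=\min_{\substack{F\subset\widetilde H_0^2(\Gamma)\\ \dim F=j}}\ \max_{\substack{f\in F\\ f\neq0}}\ \frac{\|-f''+qf\|_{L^2(\Gamma)}^2}{(Sf,f)}.
\]

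It then only remains to rewrite the denominator. For $f\in\widetilde H_0^2(\Gamma)$ I would integrate by parts edgewise, $\int_0^{\ell(e)}(-f_e''\overline{f_e}+q_e|f_e|^2)\,\dd x=[-f_e'\overline{f_e}]_0^{\ell(e)}+\int_0^{\ell(e)}(|f_e'|^2+q_e|f_e|^2)\,\dd x$, and sum over all edges; collecting boundary contributions at each vertex, the total boundary term equals $-\sum_{v\in\cV}\partial_\nu f(v)\,\overline{f(v)}$, which vanishes since $f(v)=0$ at every vertex (one of the conditions defining $\widetilde H_0^2(\Gamma)$; the Kirchhoff condition is not even needed here). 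Hence $(Sf,f)=\int_\Gamma|f'|^2\,\dd x+\int_\Gamma q|f|^2\,\dd x$, which is exactly the denominator in the statement, and the asserted formula follows. The potential-free case is obtained by setting $q=0$ identically. The only genuinely technical ingredient is the compactness of $\iota$, and that in turn reduces immediately to the elementary a priori bound for $\|f''\|$ above plus the Rellich theorem on a finite union of intervals; the rest is bookkeeping, so I do not anticipate a serious obstacle.
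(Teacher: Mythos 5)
Your proposal is correct and follows exactly the route the paper intends: Theorem~\ref{thm:minMaxKreinV} is stated there as a direct specialisation of the abstract Theorem~\ref{thm:minMaxKrein} to the operator $S$ from~\eqref{eq:SV}, with no further proof given, so your verification of the compactness of $\iota:\cH_S\to L^2(\Gamma)$ and the edgewise integration by parts identifying $(Sf,f)$ with $\int_\Gamma|f'|^2\,\dd x+\int_\Gamma q|f|^2\,\dd x$ supplies precisely the omitted details. The only point left tacit in your compactness argument is the control of $\|f'\|_{L^2(\Gamma)}$ by $\|f\|_{L^2(\Gamma)}+\|f''\|_{L^2(\Gamma)}$, which is immediate on each bounded edge (e.g.\ from $\int_0^{\ell(e)}|f_e'|^2\,\dd x=-\int_0^{\ell(e)}f_e''\overline{f_e}\,\dd x$ using the Dirichlet conditions), so there is no gap.
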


The following eigenvalue inequalities and equalities are direct consequences of Theorem~\ref{thm:inequalitiesAbstract} and~\eqref{eq:Vkernel}.

\begin{theorem}\label{thm:inequalitiesV}
Let Hypothesis \ref{hyp} be satisfied, let $- \Delta_{\Gamma, q}$ be any self-adjoint extension of the operator $S$ in~\eqref{eq:SV}, and let $d := \dim \ker (- \Delta_{\Gamma, q})$. Then
\begin{align*}
 \lambda_{j + d} (- \Delta_{\Gamma, q}) \leq \lambda_j^+ (- \Delta_{{\rm K}, \Gamma, q}) = \lambda_{j + V} (- \Delta_{{\rm K}, \Gamma, q})
\end{align*}
holds for all $j \in \N$. In particular, 
\begin{align}\label{eq:DirichletKreinV}
 \lambda_j (- \Delta_{{\rm D}, \Gamma, q}) \leq \lambda_j^+ (- \Delta_{{\rm K}, \Gamma, q}) = \lambda_{j + V} (- \Delta_{{\rm K}, \Gamma, q})
\end{align}
holds for all $j \in \N$, and in the potential-free case we have
\begin{align}\label{eq:standardKreinV}
 \lambda_{j + 1} (- \Delta_{{\rm st},\Gamma}) \leq \lambda_j^+ (- \Delta_{{\rm K},\Gamma}) = \lambda_{j + V} (- \Delta_{{\rm K},\Gamma})
\end{align}
for all $j \in \N$.
\end{theorem}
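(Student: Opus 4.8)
The plan is to obtain all three assertions as specialisations of the abstract comparison principle, Theorem~\ref{thm:inequalitiesAbstract}, applied to the symmetric operator $S$ from~\eqref{eq:SV}. First I would verify that the hypotheses of that theorem hold here. Hypothesis~\ref{hyp:abs} is satisfied: $S$ is closed, densely defined and symmetric (noted right after~\eqref{eq:SV}), and $(Sf,f)\ge\mu\|f\|^2$ with $\mu=\pi^2/(\max_{e\in\cE}\ell(e))^2>0$, as recorded above. The embedding $\iota\colon\cH_S\to L^2(\Gamma)$ is compact: indeed $\cH_S=\dom S=\widetilde H_0^2(\Gamma)$ carries the graph norm of $S$, and on each edge $H^2(0,\ell(e))$ embeds compactly into $L^2(0,\ell(e))$ by the Rellich--Kondrachov theorem; equivalently, $\cH_S$ embeds continuously into $\dom S_{\rm F}=\dom(-\Delta_{{\rm D},\Gamma,q})$, whose resolvent is compact. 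Hence Theorem~\ref{thm:inequalitiesAbstract} is available, and for any self-adjoint extension $-\Delta_{\Gamma,q}$ of $S$ with purely discrete spectrum it yields directly $\lambda_{j+d}(-\Delta_{\Gamma,q})\le\lambda_j^+(-\Delta_{{\rm K},\Gamma,q})$ with $d=\dim\ker(-\Delta_{\Gamma,q})$.

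Next I would justify the equality $\lambda_j^+(-\Delta_{{\rm K},\Gamma,q})=\lambda_{j+V}(-\Delta_{{\rm K},\Gamma,q})$. By~\eqref{eq:Vkernel} the bottom of the spectrum of $-\Delta_{{\rm K},\Gamma,q}$ (which is purely discrete by Theorem~\ref{thm:minMaxKreinV}) is the eigenvalue $0$ with multiplicity exactly $V$, while every other eigenvalue is one of the $\lambda_k^+(-\Delta_{{\rm K},\Gamma,q})$; thus $\lambda_1(-\Delta_{{\rm K},\Gamma,q})=\dots=\lambda_V(-\Delta_{{\rm K},\Gamma,q})=0$ and $\lambda_{j+V}(-\Delta_{{\rm K},\Gamma,q})=\lambda_j^+(-\Delta_{{\rm K},\Gamma,q})$ for all $j$, which is just a reindexing.

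It then remains to specialise the choice of extension. For $-\Delta_{{\rm D},\Gamma,q}$: it has purely discrete spectrum, and $\min\sigma(-\Delta_{{\rm D},\Gamma,q})\ge\mu>0$ forces $d=\dim\ker(-\Delta_{{\rm D},\Gamma,q})=0$, so the general inequality becomes~\eqref{eq:DirichletKreinV}. For the potential-free standard Laplacian $-\Delta_{{\rm st},\Gamma}$: it is a self-adjoint extension of $S$ and hence has purely discrete spectrum (any self-adjoint extension of $S$ shares the Weyl asymptotics noted after Corollary~\ref{cor:VWeyl}, or one argues directly from compactness of $\iota$); and, $\Gamma$ being connected, the Dirichlet-form computation already used in the proof of Theorem~\ref{thm:resDiffGraph} shows that $\ker(-\Delta_{{\rm st},\Gamma})$ consists precisely of the constant functions, so $d=1$ and~\eqref{eq:standardKreinV} follows.

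I do not expect a genuine obstacle here; the statement is essentially a packaging of Theorem~\ref{thm:inequalitiesAbstract}. The only points calling for (minor) care are checking compactness of $\iota$ so that Theorem~\ref{thm:inequalitiesAbstract} applies, confirming that the specific extensions considered have purely discrete spectrum, and pinning down $\dim\ker(-\Delta_{{\rm st},\Gamma})=1$ for a connected metric graph.
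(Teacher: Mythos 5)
Your proposal is correct and follows exactly the route the paper takes: the paper's proof consists of the single remark that the statements are direct consequences of Theorem~\ref{thm:inequalitiesAbstract} and~\eqref{eq:Vkernel}, and you have simply filled in the routine verifications (compactness of the embedding, discreteness of spectrum for the extensions in question, $d=0$ for the Dirichlet case and $d=1$ for the potential-free standard Laplacian on a connected graph). All of these checks are sound; the only micro-gap is that for the first, general inequality one should note that \emph{every} self-adjoint extension of $S$ automatically has purely discrete spectrum because the defect numbers equal $V<\infty$, so its resolvent differs from the compact resolvent of $S_{\rm F}$ by a finite-rank operator.
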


\begin{remark}
The inequalities \eqref{eq:DirichletKreinV} and~\eqref{eq:standardKreinV} can alternatively be deduced from Theorem~\ref{thm:resDiffGraph}; cf.\ \eqref{eq:countDV} and \eqref{eq:countstVLaplace}.
\end{remark}

\begin{remark}
If the edge lengths in $\Gamma$ are rationally independent and $q \equiv 0$, then the inequality~\eqref{eq:DirichletKreinV} is strict for all $j \in \N$, as in this case it can be seen easily that $S$ does not possess any eigenvalues.
\end{remark}

\begin{remark}
If $\Gamma$ is a tree graph, then it is known that for the Laplacian,
\begin{align}\label{eq:Friedlander}
 \lambda_{j + 1} (- \Delta_{{\rm st},\Gamma}) \leq \lambda_j (- \Delta_{{\rm D},\Gamma})
\end{align}
holds for all $j \in \N$; see e.g. \cite[Theorem 4.1]{R17}. One may combine this with \eqref{eq:DirichletKreinV} to obtain \eqref{eq:standardKreinV} in an alternative way. However, it is worth pointing out that \eqref{eq:Friedlander} does not hold in general for graphs with cycles (see the discussion in \cite[Section 5]{KR20}), but in this case \eqref{eq:standardKreinV} is still true.
\end{remark}

%
%
%
%
%
%
%
%

\section{Spectral implications of graph surgery operations}\label{sec:surgery}

Next, we investigate the effect of graph surgery operations on the eigenvalues of the perturbed Krein Laplacian $- \Delta_{{\rm K}, \Gamma, q}$. 
Graph surgery refers to the process of transforming the operator by making topological and metric changes to the graph, 
such as gluing vertices together or adding edges, forming a new graph $\widetilde\Gamma$.
One associates a potential $\widetilde q$ to the new graph $\widetilde \Gamma$ which will be determined by the type of surgery carried out.
Given a surgery operation $- \Delta_{{\rm K}, \Gamma, q}\mapsto- \Delta_{{\rm K}, \widetilde\Gamma, \widetilde q}$,
only the operators  $- \Delta_{{\rm K}, \Gamma, q}$ and $- \Delta_{{\rm K}, \widetilde\Gamma, \widetilde q}$ will be of significance to us, 
and thus we use the following simplified notation for their eigenvalues throughout this section:
\begin{align*}
\lambda_j^+:= & \lambda_j^+(- \Delta_{{\rm K}, \Gamma, q}), & \widetilde \lambda_j^+:= & \lambda_j^+(- \Delta_{{\rm K}, \widetilde \Gamma, \widetilde q}), \\
\lambda_j:= & \lambda_j(- \Delta_{{\rm K}, \Gamma, q}), & \widetilde \lambda_j:= & \lambda_j(- \Delta_{{\rm K}, \widetilde \Gamma, \widetilde q}). 
\end{align*}
In what follows, we always assume Hypothesis \ref{hyp}; the new potential $\widetilde q$ will satisfy the analogue of Hypothesis \ref{hyp} conditions for $\widetilde\Gamma$ by construction.

We begin with transformations which only affect the vertex conditions of the operator, or add new vertices. For such operations, the potential $\widetilde q\equiv q$ is unchanged (except possibly on a set of measure zero).

\begin{definition}
Let $\widetilde\Gamma$ be the graph formed from $\Gamma$ by identifying a number of its vertices, say $v_1,\dots,v_{k+1}$, to form a new vertex $v_0$. The total number of vertices is thereby reduced by $k$, and the potential $q$ associated with $\Gamma$ remains well-defined on $\widetilde\Gamma$. The transformation $- \Delta_{{\rm K}, \Gamma, q}\mapsto- \Delta_{{\rm K}, \widetilde \Gamma, q}$ is called \emph{gluing vertices}, and the inverse operation is referred to as \emph{cutting through vertices}; cf.\ Figure \ref{fig:gluingV}.
\end{definition}

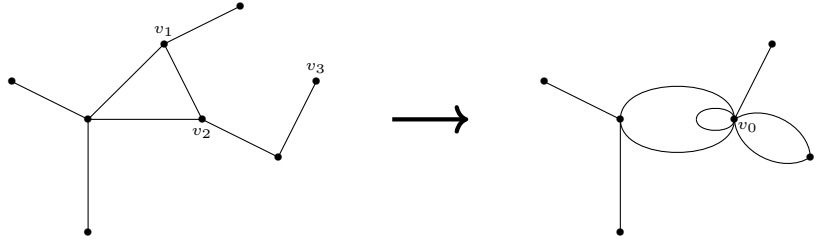
\begin{figure}[h]
    \centering
\begin{tikzpicture}

\node at (-4.5,0) {\tiny$\bullet$};
\node at (-3.5,1) {\tiny$\bullet$};
\node at (-3,0) {\tiny$\bullet$};
\node at (-2,-0.5) {\tiny$\bullet$};
\node at (-1.5,0.5) {\tiny$\bullet$};

\node at (-5.5,0.5) {\tiny$\bullet$};
\node at (-4.5,-1.5) {\tiny$\bullet$};
\node at (-2.5,1.5) {\tiny$\bullet$};

\node at (-3.5,1) [yshift=5pt] {\tiny $v_1$}; 
\node at (-3,0) [yshift=-5pt] {\tiny $v_2$}; 
\node at (-1.5,0.5) [yshift=5pt] {\tiny $v_3$}; 

\draw[] (-4.5,0) -- (-3.5,1)  ;
\draw[] (-4.5,0) -- (-3,0)  ;
\draw[] (-3.5,1) -- (-3,0)  ;
\draw[] (-3,0) -- (-2,-0.5) ;
\draw[] (-2,-0.5) -- (-1.5,0.5) ;

\draw[] (-4.5,0) -- (-5.5,0.5)  ;
\draw[] (-4.5,0) -- (-4.5,-1.5)  ;
\draw[] (-3.5,1) -- (-2.5,1.5)  ;

\draw[->, ultra thick] (-0.5,0) -- (0.5,0)  ;

\node at (2.5,0) {\tiny$\bullet$};
\node at (4,0) {\tiny$\bullet$};
\node at (5,-0.5) {\tiny$\bullet$};

\node at (1.5,0.5) {\tiny$\bullet$};
\node at (2.5,-1.5) {\tiny$\bullet$};
\node at (4.5,1) {\tiny$\bullet$};

\node at (4,0) [xshift=5pt, yshift=-3pt] {\tiny $v_0$}; 

\draw[] (2.5,0) to [bend left=90] (4,0)  ;
\draw[] (2.5,0) to [bend right=90] (4,0)  ;

\draw[] (4,0) to [bend left=60] (5,-0.5)  ;
\draw[] (4,0) to [bend right=60] (5,-0.5)  ;

\draw[] (4,0) to [bend left=90] (3.5,0)  ;
\draw[] (4,0) to [bend right=90] (3.5,0)  ;

\draw[] (2.5,0) -- (1.5,0.5)  ;
\draw[] (2.5,0) -- (2.5,-1.5)  ;
\draw[] (4,0) -- (4.5,1)  ;

\end{tikzpicture}

    \caption{Gluing vertices $v_1,v_2,v_3$ of $\Gamma$ to form a new vertex $v_0$ of $\widetilde\Gamma$.}
    \label{fig:gluingV}
\end{figure}

\begin{theorem}[Gluing vertices]\label{thm:gluingV}
Let Hypothesis \ref{hyp} be satisfied, and let $\widetilde{\Gamma}$ be the graph formed by gluing precisely $k + 1$ vertices of $\Gamma$. Then for the corresponding perturbed Krein Laplacians:
\begin{itemize}
\item[(a)] the positive eigenvalues satisfy the interlacing inequalities
\begin{equation}\label{eq:gluingV+}
\widetilde \lambda_j^+ \leq \lambda_j^+\leq\widetilde\lambda_{j + k}^+ \leq \lambda_{j + k}^+ , \qquad j \in \N;
\end{equation}
\item[(b)] the eigenvalues (counting ground states) satisfy the interlacing inequalities
\begin{equation}\label{eq:gluingV}
 \lambda_j \leq \widetilde\lambda_j \leq\lambda_{j+k} \leq \widetilde\lambda_{j+k} , \qquad j \in \N.
\end{equation}
In particular,
\begin{equation}\label{eq:gluingVV}
0=\lambda_V< \widetilde\lambda_{V-k+1}.
\end{equation}
\end{itemize}
\end{theorem}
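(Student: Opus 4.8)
The plan is to deduce Theorem~\ref{thm:gluingV} from the abstract interlacing result Theorem~\ref{thm:interlacingAbstract}, by realising the gluing operation as passing from a symmetric operator $S$ on $\Gamma$ to a larger symmetric operator $\widetilde S$ on $\widetilde\Gamma$. Concretely, let $S$ be the operator in \eqref{eq:SV} on $\Gamma$ and let $\widetilde S$ be the analogous operator on $\widetilde\Gamma$. Since identifying $v_1,\dots,v_{k+1}$ to a single vertex $v_0$ only changes how the Dirichlet and Kirchhoff conditions are bundled together at those vertices, we have $L^2(\widetilde\Gamma) = L^2(\Gamma)$ as the same Hilbert space, $\widetilde S$ acts the same way edge by edge, and crucially $\dom S \subset \dom\widetilde S$: a function vanishing at each of $v_1,\dots,v_{k+1}$ with each partial $\partial_\nu$ zero there certainly vanishes at $v_0$ and has total $\partial_\nu f(v_0) = \sum_{i=1}^{k+1}\partial_\nu f(v_i) = 0$. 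Thus $S\subset\widetilde S$. Both operators satisfy \eqref{eq:mu} (the lower bound $\mu$ from the Dirichlet Laplacian depends only on the maximal edge length, which is unchanged), and $\cH_{\widetilde S}\hookrightarrow\cH$ is compact since $- \Delta_{{\rm D},\widetilde\Gamma,q}$ has compact resolvent.

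The key combinatorial input is the co-dimension count: I claim $\dom S$ has co-dimension exactly $k$ in $\dom\widetilde S$. Indeed, $\dom S$ is precisely the set of $f\in\dom\widetilde S = \widetilde H^2(\widetilde\Gamma)\cap H^1(\widetilde\Gamma)$ with $f(v_0)=0$, $\partial_\nu f(v_0)=0$, which additionally satisfy, for the individual incident pieces, that each $f_e$ vanishes at the endpoint over $v_i$ and each partial derivative there vanishes. A cleaner way: $\dom\widetilde S / \dom S$ is detected by the boundary triple of Proposition~\ref{prop:graphBT} for $\Gamma$ (which has $\widetilde H^2(\Gamma)\cap H^1(\Gamma) = \widetilde H^2(\widetilde\Gamma)\cap H^1(\widetilde\Gamma)$, since continuity at $v_0$ on $\widetilde\Gamma$ is exactly continuity across all of $v_1,\dots,v_{k+1}$ on $\Gamma$ — wait, this needs care: on $\Gamma$ the vertices $v_1,\dots,v_{k+1}$ are separate, so $\dom S^*$ on $\Gamma$ need not be continuous across them). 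The correct statement is: $\dom\widetilde S = \{f\in\dom S^* : f(v_1)=\dots=f(v_{k+1}),\ \sum_{i=1}^{k+1}\partial_\nu f(v_i) = \text{(no constraint)}\}$ restricted further by... rather, $\dom\widetilde S$ sits inside $\dom S^*$ (the $\Gamma$-adjoint) as those $f$ with $f(v_1)=\cdots=f(v_{k+1})$ and $\partial_\nu f(v_0)$ arbitrary, intersected with the conditions defining $\widetilde S$. The cleanest route is to directly count: $\dom\widetilde S/\dom S$ is isomorphic to the span of the ``discrepancy'' functionals $f\mapsto f(v_i)$ for a function supported near the glued vertex, and a dimension count on a finite-dimensional model (e.g. using $\gamma(0)$ for $\Gamma$ applied to vectors supported on the glued coordinates) gives exactly $k$. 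I would carry this out by noting $\dim\ker\widetilde S^* = \dim\ker S^* - k = V - k$ (Proposition~\ref{prop:graphBT} gives defect number equal to number of vertices), and then using that $\dom\widetilde S_{\rm K} = \dom\widetilde S\dotplus\ker\widetilde S^*$ while $\dom S\subset\dom\widetilde S$, combined with a direct local computation that $\dom\widetilde S = \dom S \dotplus(\text{a }k\text{-dimensional space})$.

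With the co-dimension $k$ established, part (a), i.e.\ \eqref{eq:gluingV+}, is an immediate application of \eqref{eq:interlacingAbstract} in Theorem~\ref{thm:interlacingAbstract} (with the roles so that $S_{\rm K}\leftrightarrow -\Delta_{{\rm K},\Gamma,q}$ and $\widetilde S_{\rm K}\leftrightarrow -\Delta_{{\rm K},\widetilde\Gamma,q}$). For part (b): by \eqref{eq:Vkernel}, $-\Delta_{{\rm K},\Gamma,q}$ has a $V$-fold zero eigenvalue and $-\Delta_{{\rm K},\widetilde\Gamma,q}$ has a $(V-k)$-fold zero eigenvalue, so $\lambda_j = 0$ for $j\le V$, $\lambda_{j+V} = \lambda_j^+$, and $\widetilde\lambda_j = 0$ for $j\le V-k$, $\widetilde\lambda_{j+V-k} = \widetilde\lambda_j^+$. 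Then \eqref{eq:gluingV} follows by re-indexing \eqref{eq:gluingV+}: for $j\le V-k$ all four quantities in \eqref{eq:gluingV} involve zeros except possibly the last, and one checks the claimed inequalities hold trivially or reduce to \eqref{eq:gluingV+}; for $j > V-k$ one substitutes $j = i + V - k$ and the inequalities become $\lambda_{i+V-k}\le\widetilde\lambda_i^+$ (for $i\le k$, LHS is $0$), $\widetilde\lambda_i^+\le\lambda_{i+V}$... — I would write out the four index regimes carefully. Finally \eqref{eq:gluingVV} is the special case: $\lambda_V = 0$ strictly less than $\lambda_{V+1} = \lambda_1^+ > 0$ (positivity of $\lambda_1^+$ from Theorem~\ref{thm:minMaxKreinV}, since the Rayleigh quotient is bounded below by $\mu>0$), and $\widetilde\lambda_{V-k+1} = \widetilde\lambda_1^+ \geq \lambda_1^+ > 0$ using the first inequality in \eqref{eq:gluingV+}; strictness of $0 < \widetilde\lambda_{V-k+1}$ is then clear. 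The main obstacle is the co-dimension computation — verifying that gluing $k+1$ vertices reduces $\dim(\dom\widetilde S/\dom S)$ by exactly $k$ rather than by more, which requires a careful local analysis near the glued vertex (the point being that the single Kirchhoff constraint $\sum\partial_\nu f(v_i)=0$ replaces $k+1$ separate ones, and the single value constraint $f(v_0)=0$ with continuity replaces $k+1$ separate vanishing conditions, netting a change of $2(k+1) - (2 + 2k)$... one must be precise about which conditions are continuity versus which are in the operator domain).
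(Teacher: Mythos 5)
Your overall strategy is exactly the paper's: realise gluing as an operator inclusion $S \subset \widetilde S$ of the symmetric operators \eqref{eq:SV} on $\Gamma$ and $\widetilde\Gamma$, verify that $\dom S$ has co-dimension $k$ in $\dom \widetilde S$, feed this into Theorem~\ref{thm:interlacingAbstract} to get \eqref{eq:gluingV+}, and then re-index via \eqref{eq:Vkernel} (kernel dimensions $V$ and $V-k$) to get \eqref{eq:gluingV} and \eqref{eq:gluingVV}. That skeleton is sound, and your verification that $\dom S \subset \dom\widetilde S$ is correct.

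The gap is the one step you yourself flag as ``the main obstacle'': the co-dimension count is never actually carried out. You offer three sketches, none of which closes. The boundary-triple/defect-number route ($\dim\ker\widetilde S^* = V-k$) does not by itself give the co-dimension of $\dom S$ in $\dom\widetilde S$ without an additional accounting (e.g.\ via $\dim(\dom S^*/\dom S)=2V$ and the fact that $\dom\widetilde S^*$ has co-dimension $k$ in $\dom S^*$), which you do not supply; and your final constraint count ``$2(k+1)-(2+2k)$'' evaluates to $0$, not $k$ — the correct bookkeeping is that at the glued vertex $\widetilde\Gamma$ imposes $d+1$ linear conditions (all $d$ endpoint values zero plus one Kirchhoff sum) where $\Gamma$ imposed $d+k+1$ (all endpoint values zero plus $k+1$ separate Kirchhoff sums), a difference of $k$. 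The paper's argument is a clean one-liner for $k=1$ which you should adopt: for any $f,g\in\dom\widetilde S$, the combination $h:=(\partial_\nu g(v_1))f-(\partial_\nu f(v_1))g$ satisfies $\partial_\nu h(v_1)=0$, hence also $\partial_\nu h(v_2)=-\partial_\nu h(v_1)=0$, and the Dirichlet conditions at $v_1,v_2$ hold automatically since $h(v)=0$ at the glued vertex; thus $h\in\dom S$, so $\dim(\dom\widetilde S/\dom S)\le 1$, with equality since some $f\in\dom\widetilde S$ has $\partial_\nu f(v_1)\ne 0$. General $k$ follows by gluing one vertex at a time. A small further slip: you invoke ``the first inequality in \eqref{eq:gluingV+}'' to get $\widetilde\lambda_1^+\ge\lambda_1^+$, but that inequality goes the other way ($\widetilde\lambda_1^+\le\lambda_1^+$); fortunately \eqref{eq:gluingVV} needs only $\widetilde\lambda_{V-k+1}=\widetilde\lambda_1^+>0$, which is immediate from \eqref{eq:Vkernel} applied to $\widetilde\Gamma$ and needs no comparison with $\Gamma$ at all.
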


\begin{proof}
Denote by $S$ and $\widetilde S$ the symmetric operators in $L^2 (\Gamma)$ and $L^2 (\widetilde \Gamma)$, respectively, defined as in~\eqref{eq:SV}. Then 
\begin{align*}
 \dom S = \widetilde H_0^2 (\Gamma) \subset \widetilde H_0^2 (\widetilde \Gamma) = \dom \widetilde S,
\end{align*}
and the action of the two operators coincides on the smaller domain; we always identify functions on $\Gamma$ with functions on $\widetilde \Gamma$, and conversely, in the obvious way. Thus $S \subset \widetilde S$. 

We show next that the co-dimension of $\dom S$ in $\dom \widetilde S$ is $k$, and we do this for the case $k = 1$ only; for higher $k$ this can be obtained by successively gluing vertices. For $k = 1$, denote by $v_1,v_2$ the vertices of $\Gamma$ that are glued to form the new vertex $v$. Let $f, g \in\dom \widetilde S$ and observe that the linear combination
\begin{equation*}
 h:=(\partial_\nu g (v_1)) f - (\partial_\nu f(v_1)) g
\end{equation*}
satisfies both Dirichlet and Kirchhoff conditions at both $v_1$ and $v_2$, with the latter due to the fact that $f, g$ satisfy Kirchhoff conditions at $v$ (i.e. $\partial_\nu f(v_1) + \partial_\nu f (v_2) = 0$ and likewise for $g$). Then $h \in \dom S$, which proves the claim on the co-dimension. Thus we can apply Theorem~\ref{thm:interlacingAbstract} to obtain inequality \eqref{eq:gluingV+}.

For inequality \eqref{eq:gluingV}, one applies \eqref{eq:Vkernel}, together with the fact that the number of vertices of $\widetilde \Gamma$ is $V - k$, to the chain of inequalities \eqref{eq:gluingV+} to obtain \eqref{eq:gluingV}. Finally, inequality \eqref{eq:gluingVV} is a trivial consequence of \eqref{eq:Vkernel}.
\end{proof}

Gluing vertices therefore increases the eigenvalues of the perturbed Krein Laplacian, with inequality \eqref{eq:gluingV} providing bounds for this increase. Indeed, \eqref{eq:gluingVV} implies that eigenvalues $\lambda_{V-k+1},\dots,\lambda_V$ increase strictly. On the other hand, the increases are counteracted by the fact that the kernel of the operator shrinks after gluing, which explains why the positive eigenvalues actually decrease. By contrast, whilst the eigenvalues of the perturbed standard Laplacian increase by gluing, satisfying in particular the interlacing inequalities
\begin{equation*}
 \lambda_j (- \Delta_{{\rm st},\Gamma,q}) \leq \lambda_j (- \Delta_{{\rm st},\widetilde \Gamma,q})\leq\lambda_{j+k} (- \Delta_{{\rm st},\Gamma,q})\leq \lambda_{j+k} (- \Delta_{{\rm st},\widetilde \Gamma,q}),
\end{equation*}
the kernel is unchanged, and thus the positive eigenvalues increase as well.

\begin{example}\label{ex:glueInterval}
Let $\Gamma=[0,\ell]$ be the interval of length $\ell$. The vertex conditions for the Krein Laplacian $-\Delta_{{\rm K},\Gamma}$ were calculated in Example \ref{ex:interval}. From this, one computes that the eigenvalues $\lambda=\kappa^2$ are given by the solutions of the equation
\begin{equation*}
\left[\cos\frac{\kappa\ell}{2}-\frac{2}{\kappa\ell}\sin\frac{\kappa\ell}{2}\right]\sin\frac{\kappa\ell}{2}=0.
\end{equation*}
The positive solutions to this are
\begin{equation*}
\kappa=\begin{cases}\frac{j\pi}{\ell} & \text{if $j=2,4,6,...$}\\\frac{j\pi}{\ell}-\eta_j & \text{if $j=3,5,7,...$}\end{cases}
\end{equation*}
where the numbers $\eta_j$ are such that $0<\eta_j\ll \frac{\pi}{\ell} $ and $\lim_{j\to\infty}\eta_j=0$.

Now let $\widetilde \Gamma$ be the loop of length $\ell$, formed by gluing together the two vertices of the interval $\Gamma$; see Figure \ref{fig:gluingExample}.
According to Corollary \ref{cor:flower}, the Krein Laplacian $- \Delta_{{\rm K}, \widetilde \Gamma}$ on the loop is identical to the standard Laplacian $- \Delta_{{\rm st}, \widetilde \Gamma}$, and thus they share the same eigenvalues.

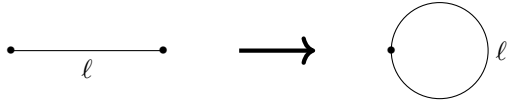
\begin{figure}[h]
    \centering
\begin{tikzpicture}

\node at (-3.5,0) {\tiny$\bullet$};
\node at (-1.5,0) {\tiny$\bullet$};

\node at (-2.5,0) [yshift=-7pt] { $\ell$}; 

\draw[] (-3.5,0) -- (-1.5,0)  ;

\draw[->, ultra thick] (-0.5,0) -- (0.5,0)  ;

\node at (1.5,0) {\tiny$\bullet$};

\node at (2.774,0) [xshift=5pt] { $\ell$}; 

\draw (2.137,0) circle (0.637cm);

\end{tikzpicture}

    \caption{Transforming the interval $\Gamma$ to the loop $\widetilde\Gamma$.}
    \label{fig:gluingExample}
\end{figure}

\noindent The following tables are demonstrative of Theorem \ref{thm:gluingV} for these two graphs: the positive eigenvalues decrease by gluing, but when the ground states are included, they increase.

\begin{table}[!htb]
    \begin{minipage}{.5\linewidth}
      \caption{Positive eigenvalues}
      \centering
\begin{tabular}{c || l | l} 
$j$ & $\lambda_j^+(- \Delta_{{\rm K}, \Gamma})$ & $\lambda_j^+(- \Delta_{{\rm K}, \widetilde \Gamma})$ \\ [0.5ex]
\hline 
&& \\
$1$ & $\left(\frac{2\pi}{\ell}\right)^2$ & $\left(\frac{2\pi}{\ell}\right)^2$ \\ [0.5ex]
$2$ & $\left(\frac{3\pi}{\ell}-\eta_3\right)^2$ & $\left(\frac{2\pi}{\ell}\right)^2$ \\ [0.5ex]
$3$ & $\left(\frac{4\pi}{\ell}\right)^2$ & $\left(\frac{4\pi}{\ell}\right)^2$ \\ [0.5ex]
$4$ & $\left(\frac{5\pi}{\ell}-\eta_5\right)^2$ & $\left(\frac{4\pi}{\ell}\right)^2$ \\ [0.5ex]
$5$ & $\left(\frac{6\pi}{\ell}\right)^2$ & $\left(\frac{6\pi}{\ell}\right)^2$ \\ [0.5ex]
$6$ & $\left(\frac{7\pi}{\ell}-\eta_7\right)^2$ & $\left(\frac{6\pi}{\ell}\right)^2$ 
\end{tabular}
    \end{minipage}%
    \begin{minipage}{.5\linewidth}
      \centering
        \caption{All eigenvalues}
\begin{tabular}{c || l | l} 
$j$ & $\lambda_j(- \Delta_{{\rm K}, \Gamma})$ & $\lambda_j(- \Delta_{{\rm K}, \widetilde \Gamma})$ \\ [0.5ex]
\hline 
&& \\
$1$ & $0$ & $0$ \\ [0.5ex]
$2$ & $0$ & $\left(\frac{2\pi}{\ell}\right)^2$ \\ [0.5ex]
$3$ & $\left(\frac{2\pi}{\ell}\right)^2$ & $\left(\frac{2\pi}{\ell}\right)^2$ \\ [0.5ex]
$4$ & $\left(\frac{3\pi}{\ell}-\eta_3\right)^2$ & $\left(\frac{4\pi}{\ell}\right)^2$ \\ [0.5ex]
$5$ & $\left(\frac{4\pi}{\ell}\right)^2$ & $\left(\frac{4\pi}{\ell}\right)^2$ \\ [0.5ex]
$6$ & $\left(\frac{5\pi}{\ell}-\eta_5\right)^2$ & $\left(\frac{6\pi}{\ell}\right)^2$
\end{tabular}
    \end{minipage} 
\end{table}

\end{example}

\begin{definition}
Assume that Hypothesis \ref{hyp} is satisfied, and let $e_0$ be an edge of $\Gamma$ with (possibly coincident) incident vertices $v_1,v_2$.
Let $\widetilde\Gamma$ be the graph formed from $\Gamma$ by replacing $e_0$ with a path graph from $v_1$ to $v_2$, composed of two edges $e_1,e_2$, joined together by a degree-2 vertex $v_0$, and with total length $\ell(e_1)+\ell(e_2)=\ell(e_0)$. 
Parametrising $e_0$ by $[0,\ell(e_0)]$ and $e_1,e_2$ by $[0,\ell(e_1)],[\ell(e_1),\ell(e_0)]$ respectively, 
where the endpoint $\ell(e_1)$ in both of the latter is identified with $v_0$, the potential $\widetilde q$ associated with $\widetilde\Gamma$ is defined by
\begin{equation*}
\widetilde q_{e_1}:=\left.q_{e_0}\right|_{[0,\ell(e_1)]},\qquad\widetilde q_{e_2}:=\left.q_{e_0}\right|_{[\ell(e_1),\ell(e_2)]}
\end{equation*}
on $e_1,e_2$, and $\widetilde q_e \equiv q_e$ on all other edges $e$. The transformation $- \Delta_{{\rm K}, \Gamma, q}\mapsto- \Delta_{{\rm K}, \widetilde \Gamma, \widetilde q}$ is called 
\emph{inserting a degree-2 vertex along an edge}, and the inverse operation is referred to as \emph{removing a degree-2 vertex}; cf.\ Figure \ref{fig:deg2V}.

\end{definition}

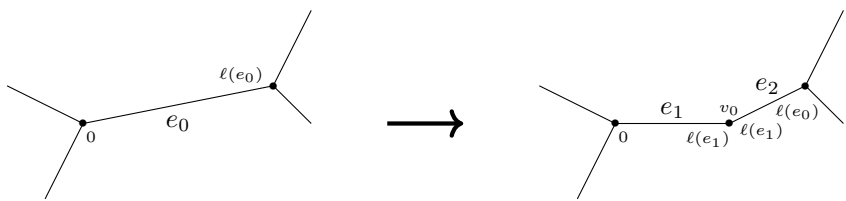
\begin{figure}[h]
    \centering
\begin{tikzpicture}

\node at (7-4.5,0) {\tiny$\bullet$};
\node at (7-3,0) {\tiny$\bullet$};
\node at (7-2,0.5) {\tiny$\bullet$};

\node at (7-3,0) [yshift=5pt] {\tiny $v_0$}; 
\node at (7-3.75,0) [yshift=5pt] { $e_1$}; 
\node at (7-2.5,0.25) [yshift=7pt] { $e_2$}; 

\node at (7-4.5,0) [xshift=3pt, yshift=-5pt] {\tiny $0$}; 
\node at (7-3,0) [xshift=-8pt, yshift=-6pt] {\tiny $\ell (e_1)$}; 
\node at (7-3,0) [xshift=12pt, yshift=-3pt] {\tiny $\ell (e_1)$}; 
\node at (7-2,0.5) [xshift=-3pt, yshift=-10pt] {\tiny $\ell (e_0)$}; 

\draw[] (7-4.5,0) -- (7-3,0)  ;
\draw[] (7-3,0) -- (7-2,0.5) ;

\draw[] (7-4.5,0) -- (7-5.5,0.5)  ;
\draw[] (7-4.5,0) -- (7-5,-1)  ;
\draw[] (7-2,0.5) -- (7-1.5,1.5)  ;
\draw[] (7-2,0.5) -- (7-1.5,0)  ;

\draw[->, ultra thick] (-0.5,0) -- (0.5,0)  ;

\node at (-7+2.5,0) {\tiny$\bullet$};
\node at (-7+5,0.5) {\tiny$\bullet$};

\node at (-7+3.75,0.25) [yshift=-7pt] { $e_0$}; 

\node at (-7+2.5,0) [xshift=3pt, yshift=-5pt] {\tiny $0$}; 
\node at (-7+5,0.5) [xshift=-12pt, yshift=4pt] {\tiny $\ell (e_0)$}; 

\draw[] (-7+2.5,0) -- (-7+5,0.5) ;

\draw[] (-7+2.5,0) -- (-7+1.5,0.5)  ;
\draw[] (-7+2.5,0) -- (-7+2,-1)  ;
\draw[] (-7+5,0.5) -- (-7+5.5,1.5)  ;
\draw[] (-7+5,0.5) -- (-7+5.5,0)  ;

\end{tikzpicture}

    \caption{Inserting a degree-2 vertex $v$ of $\Gamma$ to form $\widetilde\Gamma$.}
    \label{fig:deg2V}
\end{figure}

In the special case that $\Gamma$ is just one loop, it obviously does not make sense to remove the vertex of degree two, as the result would be a graph with one edge but no vertices. However, in this case the above procedure may just be understood as replacing the perturbed Krein Laplacian with the perturbed standard Laplacian. To replace ``Krein vertex conditions'' by standard conditions on arbitrary vertices, we refer to Theorem \ref{thm:addstandardB}.

\begin{theorem}[Inserting degree-2 vertices]\label{thm:deg2V}
Let Hypothesis \ref{hyp} be satisfied, and let $\widetilde{\Gamma}$ be the graph formed by inserting $k_0$ vertices of degree 2 along edges of $\Gamma$. Then for the corresponding perturbed Krein Laplacians:
\begin{itemize}
\item[(a)] the positive eigenvalues satisfy 
\begin{equation}\label{eq:deg2V+}
 \lambda_j^+ \leq\widetilde\lambda_j^+  \leq \lambda_{j + k_0}^+ \leq \widetilde\lambda_{j + k_0}^+, \qquad j \in \N;
\end{equation}
\item[(b)] the eigenvalues (counting ground states) satisfy
\begin{equation}\label{eq:deg2V}
\widetilde\lambda_{j}\leq \lambda_{j}\leq\widetilde \lambda_{j + k_0} \leq \lambda_{j + k_0} , \qquad j \in \N.
\end{equation}
\end{itemize}
\end{theorem}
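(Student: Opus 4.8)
The plan is to follow exactly the strategy used for Theorem~\ref{thm:gluingV}: realise the two perturbed Krein Laplacians as Krein--von Neumann extensions of a symmetric operator and of a symmetric restriction of it, and then invoke the abstract interlacing principle of Theorem~\ref{thm:interlacingAbstract}. Let $S$ and $\widetilde S$ denote the symmetric operators from~\eqref{eq:SV} on $L^2(\Gamma)$ and $L^2(\widetilde\Gamma)$ respectively, where as usual we identify $L^2(\widetilde\Gamma)$ with $L^2(\Gamma)$; since the potentials $q$ and $\widetilde q$ coincide almost everywhere, the two operators have the same differential expression $-\frac{\dd^2}{\dd x^2}+q$. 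The orientation of the inclusion is opposite to the gluing case: here one expects $\widetilde S\subset S$, because subdividing an edge only \emph{adds} vertex conditions.

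The crucial point is to show that $\dom\widetilde S=\widetilde H_0^2(\widetilde\Gamma)$ is a subspace of $\dom S=\widetilde H_0^2(\Gamma)$ of co-dimension exactly $k_0$. First, if $f\in\widetilde H_0^2(\widetilde\Gamma)$, then at each inserted degree-$2$ vertex $v_0$ the continuity of $f$ together with the Kirchhoff condition $\partial_\nu f(v_0)=0$ forces the one-sided derivatives of $f$ to match at $v_0$; hence $f$ is in fact $H^2$ across $v_0$, so that $f$, viewed on $\Gamma$, lies in $\widetilde H^2(\Gamma)\cap H^1(\Gamma)$ and satisfies the Dirichlet and Kirchhoff conditions at every vertex of $\Gamma$, i.e.\ $f\in\dom S$. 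Conversely, any $f\in\dom S$ is automatically $H^2$ on each edge of $\widetilde\Gamma$, continuous, and, being $C^1$ at each inserted vertex, satisfies the Kirchhoff condition there for free; thus $f\in\dom\widetilde S$ if and only if the $k_0$ scalar Dirichlet conditions $f(v_0)=0$ (one per inserted vertex, at distinct points) hold. This gives co-dimension $k_0$; alternatively one may insert the vertices one at a time, each step contributing co-dimension one.

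Now $\widetilde S\subset S$ with $\dom\widetilde S$ of co-dimension $k_0$ in $\dom S$, the lower bound~\eqref{eq:mu} holds for $S$ (with a constant at least as large as the one for $\widetilde\Gamma$, since subdivision does not increase the maximal edge length), and the embedding of $\cH_S$ into $L^2(\Gamma)$ is compact because $-\Delta_{{\rm D},\Gamma,q}$ has discrete spectrum. Hence Theorem~\ref{thm:interlacingAbstract}, applied with the roles of its symmetric operators played by $\widetilde S$ (the smaller one) and $S$ (the larger one) and with $k=k_0$, yields $\lambda_j^+\le\widetilde\lambda_j^+\le\lambda_{j+k_0}^+\le\widetilde\lambda_{j+k_0}^+$ for all $j$ --- recall that the Krein--von Neumann extension of the \emph{larger} symmetric operator has the \emph{smaller} positive eigenvalues. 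This is~\eqref{eq:deg2V+}. For part~(b), I would combine~\eqref{eq:deg2V+} with the kernel dimensions from~\eqref{eq:Vkernel}, namely $\dim\ker(-\Delta_{{\rm K},\Gamma,q})=V$ and $\dim\ker(-\Delta_{{\rm K},\widetilde\Gamma,\widetilde q})=V+k_0$, so that $\lambda_j^+=\lambda_{j+V}$ and $\widetilde\lambda_j^+=\widetilde\lambda_{j+V+k_0}$; substituting these into~\eqref{eq:deg2V+} and shifting indices (using that the first $V$, respectively $V+k_0$, eigenvalues vanish) turns~\eqref{eq:deg2V+} into~\eqref{eq:deg2V}. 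The only genuinely delicate step is the co-dimension count --- specifically the observation that, because continuity together with a Kirchhoff condition at a degree-$2$ vertex is equivalent to $C^1$-matching, subdividing an edge adds \emph{only} the Dirichlet condition $f(v_0)=0$ to $\dom S$ and nothing more; everything else is index bookkeeping entirely parallel to the gluing case.
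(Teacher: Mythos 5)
Your proposal is correct and follows essentially the same route as the paper: establish the inclusion $\widetilde S\subset S$ with co-dimension $k_0$, apply Theorem~\ref{thm:interlacingAbstract} with the roles of the two symmetric operators reversed relative to the gluing case, and then shift indices via the kernel dimensions from~\eqref{eq:Vkernel}. Your co-dimension count (identifying the Dirichlet condition at each inserted vertex as the only new constraint, since continuity plus Kirchhoff at a degree-two vertex is just $C^1$-matching) is in fact slightly more explicit than the paper's linear-combination argument, but it is the same idea.
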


\begin{proof}
If we define $S$ and $\widetilde S$ corresponding to $\Gamma$ and $\widetilde \Gamma$ respectively, as in \eqref{eq:SV}, then $\widetilde S \subset  S$. Moreover, $\dom \widetilde S$ has co-dimension $k_0$ in $\dom  S$; indeed, if $k_0 = 1$, then for any two linearly independent functions $f, g \in \dom  S$, the function $f (v_1) g - g (v_1) f$ vanishes at $v_1$ and thus belongs to $\dom\widetilde S$. The case of arbitrary $k_0$ follows inductively. Then all estimates in \eqref{eq:deg2V+} follow directly from Theorem \ref{thm:interlacingAbstract}, noting that the roles of $S$ and $\widetilde S$ are reversed. After this, \eqref{eq:deg2V} follows with the help of \eqref{eq:Vkernel}. 
\end{proof}

The following example shows that the positive eigenvalues of the Krein Laplacian may indeed increase strictly from adding a degree-2 vertex, in contrast with the standard Laplacian which does not feel degree-2 vertices at all.

\begin{example}
Let $\Gamma$ be the interval of length two, and let $\widetilde\Gamma$ be the path graph formed by inserting a vertex of degree 2 at its midpoint, creating two intervals each of length one connected by a single vertex.
A direct computation shows that the positive eigenvalues of the Krein Laplacian on $\widetilde\Gamma$ are the numbers $\kappa^2$ for which $\kappa$ is a root of
\begin{align*}
 \kappa \big( (\kappa^2 - 2) \sin (2 \kappa) + \kappa + 4 \sin \kappa - 4 \kappa \cos \kappa + 3 \kappa \cos(2 \kappa) \big) = 0.
\end{align*}
The lowest two positive eigenvalues are then $\widetilde\lambda_1^+ \approx 4.5^2$ and $\widetilde\lambda_2^+ = (2 \pi)^2$. In contrast to this, the first two positive eigenvalues of the Krein Laplacian on $\Gamma$ are $\lambda_1^+=\pi^2$ and $\lambda_2^+<(3 \pi/2)^2$; cf.\ Example \ref{ex:glueInterval}.
\end{example}

We have seen in Theorem \ref{thm:gluingV} how the eigenvalues change upon gluing vertices of~$\Gamma$. It is also possible to glue arbitrary points of $\Gamma$ together. Again, as the (perturbed) Krein Laplacian distinguishes between vertices of degree two and non-vertex points on the graph, the following is more general than Theorem \ref{thm:gluingV}.

\begin{definition}
Assume that Hypothesis \ref{hyp} is satisfied, and let $\mathcal{N}$ be a finite subset of points in $\Gamma$ (which may include both vertices and points along edges). 
Let $\widetilde\Gamma$ be the graph formed by first inserting a vertex at each of the points in $\mathcal{N}$ which are not already vertices, 
and then gluing all of these new vertices together with the remaining vertices in $\mathcal{N}$ to form a single point.
The transformation $- \Delta_{{\rm K}, \Gamma, q}\mapsto- \Delta_{{\rm K}, \widetilde \Gamma, q}$ is called \emph{gluing the points in $\cN$}.
\end{definition}

This is evidently a two-step process, consisting of insterting degree-2 vertices along edges, and then gluing vertices.
In general, one cannot determine the effect on individual eigenvalues since they increase during the first step but decrease during the second.
Nevertheless, a direct application of Theorems \ref{thm:gluingV} and \ref{thm:deg2V} gives some insight into their behaviour. 

\begin{corollary}[Gluing arbitrary points]\label{cor:arbgluingV}
Assume that Hypothesis \ref{hyp} is satisfied. Let $\mathcal{N}$ be a finite subset of $k+1$ points in $\Gamma$ of which $k_0\leq k+1$ are not vertices, and let $\widetilde{\Gamma}$ be the graph formed by gluing these points together. Then for the corresponding perturbed Krein Laplacians:
\begin{itemize}
\item[(a)] the positive eigenvalues satisfy
\begin{align*}
 \widetilde \lambda_{j}^+ \leq \lambda_{j + k_0}^+ \leq \widetilde\lambda_{j + k + k_0}^+  \leq \lambda_{j + k + 2 k_0}^+, \qquad j \in \N;
\end{align*}
\item[(b)] the eigenvalues (counting ground states) satisfy
\begin{align*}
 \widetilde \lambda_{j}  \leq \lambda_{j + k}  \leq \widetilde\lambda_{j + k + k_0} \leq \lambda_{j + 2 k + k_0} , \qquad j \in \N.
\end{align*}
\end{itemize}
\end{corollary}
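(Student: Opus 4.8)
The plan is to realise the surgery $-\Delta_{{\rm K},\Gamma,q}\mapsto-\Delta_{{\rm K},\widetilde\Gamma,q}$ as the composition of the two operations already analysed, and then simply chain the interlacing inequalities of Theorems~\ref{thm:deg2V} and~\ref{thm:gluingV}. First I would let $\Gamma'$ be the metric graph obtained from $\Gamma$ by inserting a degree-$2$ vertex at each of the $k_0$ points of $\cN$ that are not already vertices (if several such points lie on a common edge, one inserts them successively); the associated potential $q'$, defined edgewise as the corresponding restriction of $q$, is again measurable, bounded and nonnegative, so Hypothesis~\ref{hyp} holds for $(\Gamma',q')$, and $\Gamma'$ is finite and connected. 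By construction all $k+1$ points of $\cN$ are now \emph{distinct} vertices of $\Gamma'$, and $\widetilde\Gamma$ is precisely the graph obtained from $\Gamma'$ by gluing these $k+1$ vertices into a single vertex, which lowers the number of vertices by $k$. Throughout, write $\nu_j^+$ and $\nu_j$ for the positive eigenvalues and for all the eigenvalues (counting ground states) of $-\Delta_{{\rm K},\Gamma',q'}$.

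Next I would apply the two interlacing results to the two halves of this factorisation. Applying Theorem~\ref{thm:deg2V} with $k_0$ inserted vertices to the pair $(\Gamma,\Gamma')$ yields
\begin{align*}
 \lambda_j^+ \leq \nu_j^+ \leq \lambda_{j+k_0}^+ \quad\text{and}\quad \nu_j \leq \lambda_j \leq \nu_{j+k_0}, \qquad j\in\N,
\end{align*}
while applying Theorem~\ref{thm:gluingV} with $k+1$ glued vertices (so that the parameter there equals $k$) to the pair $(\Gamma',\widetilde\Gamma)$ yields
\begin{align*}
 \widetilde\lambda_j^+ \leq \nu_j^+ \leq \widetilde\lambda_{j+k}^+ \quad\text{and}\quad \nu_j \leq \widetilde\lambda_j \leq \nu_{j+k}, \qquad j\in\N.
\end{align*}

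Finally I would combine these with the appropriate index shifts. For the positive eigenvalues: $\widetilde\lambda_j^+\leq\nu_j^+\leq\lambda_{j+k_0}^+$ gives the first inequality of part~(a); applying the first chain at index $j+k_0$ and then the second chain at the same index gives $\lambda_{j+k_0}^+\leq\nu_{j+k_0}^+\leq\widetilde\lambda_{j+k+k_0}^+$, the second inequality; and $\widetilde\lambda_{j+k+k_0}^+\leq\nu_{j+k+k_0}^+\leq\lambda_{j+k+2k_0}^+$ gives the third. For the eigenvalues counting ground states one argues in the same way, using $\widetilde\lambda_j\leq\nu_{j+k}\leq\lambda_{j+k}$, then $\lambda_{j+k}\leq\nu_{j+k+k_0}\leq\widetilde\lambda_{j+k+k_0}$, then $\widetilde\lambda_{j+k+k_0}\leq\nu_{j+2k+k_0}\leq\lambda_{j+2k+k_0}$. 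There is no analytic difficulty here, since everything is already contained in Theorems~\ref{thm:gluingV} and~\ref{thm:deg2V}; the only things demanding care are the bookkeeping of the index shifts (in particular checking that each of the two monotone chains is invoked at the right index) and the preliminary observations that $\Gamma'$ is an admissible metric graph with potential satisfying Hypothesis~\ref{hyp} and that gluing the $k+1$ marked vertices of $\Gamma'$ reduces the vertex count by exactly $k$ — this last point is where the hypothesis $k_0\leq k+1$ enters, guaranteeing that the $k_0$ newly created vertices are among the $k+1$ vertices being identified.
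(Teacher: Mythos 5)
Your proposal is correct and follows exactly the paper's route: the paper likewise factors the operation into inserting $k_0$ degree-2 vertices followed by gluing the resulting $k+1$ vertices, and obtains the stated chains as a direct application of Theorems~\ref{thm:deg2V} and~\ref{thm:gluingV}. Your index bookkeeping checks out in all six links of the two chains.
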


Next, we move on to transformations which change the volume of $\Gamma$. Here, the potential $q$ will not be well-defined on the new graph, for which the associated potential $\widetilde q$ is defined accordingly.

\begin{definition}
Assume that Hypothesis \ref{hyp} is satisfied. Let $\widetilde\Gamma$ be the graph formed from $\Gamma$ by lengthening one of its edges, $e_0$, by a factor of $\alpha>1$, so that it has length $\widetilde \ell (e_0) = \alpha \ell (e_0)$ in $\widetilde \Gamma$. If there is a potential $q$ associated with~$\Gamma$, then we define the potential $\widetilde q$ associated with $\widetilde\Gamma$ via
\begin{equation}\label{lengthenedpotential}
 \widetilde q_{e_0}(x):=\alpha^{-2}q_{e_0} (x/\alpha),
\end{equation}
and $\widetilde q_e \equiv q_e$ on all other edges. The transformation $- \Delta_{{\rm K}, \Gamma, q}\mapsto- \Delta_{{\rm K}, \widetilde \Gamma, \widetilde q}$ is called 
\emph{lengthening the edge $e_0$}, and the inverse operation is referred to as \emph{shrinking the edge $e_0$}.
\end{definition}

\begin{theorem}[Lengthening an edge]\label{thm:lengthendgeV}
Let Hypothesis \ref{hyp} be satisfied, and let $\widetilde{\Gamma}$ be the graph formed by lengthening one of the edges of $\Gamma$. Then for the corresponding perturbed Krein Laplacians:
\begin{itemize}
\item[(a)] the positive eigenvalues satisfy
\begin{equation}\label{eq:lengthenedgeV+}
\widetilde \lambda_j^+\leq\lambda_j^+, \qquad j \in \N;
\end{equation}
\item[(b)] the eigenvalues (counting ground states) satisfy
\begin{equation}\label{eq:lengthenedgeV}
\widetilde \lambda_j \leq  \lambda_j , \qquad j \in \N.
\end{equation}
\end{itemize}
\end{theorem}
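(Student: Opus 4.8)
The plan is to reduce the statement to the min-max characterisation of Theorem~\ref{thm:minMaxKreinV} and exploit a scaling argument that turns a lengthened edge into the original edge at the price of introducing a non-negative potential, combined with the monotonicity of the Rayleigh quotient under the addition of pendant length. More concretely, I would proceed as follows.

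\emph{Step 1: Scale the lengthened edge back.} Let $e_0$ be the edge lengthened by the factor $\alpha>1$, and let $\widetilde q$ be the potential in \eqref{lengthenedpotential}. The substitution $x\mapsto x/\alpha$ on $e_0$ is precisely the one under which $-\frac{\dd^2}{\dd x^2}+\widetilde q_{e_0}$ on $[0,\alpha\ell(e_0)]$ is unitarily equivalent (up to the factor $\alpha^{-2}$) to $-\frac{\dd^2}{\dd x^2}+q_{e_0}$ on $[0,\ell(e_0)]$; the factor $\alpha^{-2}$ in \eqref{lengthenedpotential} is chosen exactly so that this works. This means that $\widetilde\Gamma$ (with $\widetilde q$) can equally be viewed as $\Gamma$ (with $q$) in which the single edge $e_0$ has had a pendant edge of length $(\alpha-1)\ell(e_0)$ attached to it by inserting a degree-2 vertex — after the appropriate reparametrisation this is a change that only \emph{increases the volume}. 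So the key point is to prove monotonicity under \emph{attaching pendant length}: if $\widetilde\Gamma$ arises from $\Gamma$ by prolonging an edge (keeping the potential as in \eqref{lengthenedpotential}), then every positive eigenvalue of the perturbed Krein Laplacian can only decrease. Actually, I would prefer to avoid the scaling detour altogether and work directly: use the min-max formula in Theorem~\ref{thm:minMaxKreinV} for $-\Delta_{{\rm K},\widetilde\Gamma,\widetilde q}$.

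\emph{Step 2: Build a test space on $\widetilde\Gamma$ from eigenfunctions on $\Gamma$.} Given $j\in\N$, let $F\subset\widetilde H_0^2(\Gamma)$ be a $j$-dimensional subspace realising (or nearly realising) $\lambda_j^+(-\Delta_{{\rm K},\Gamma,q})$ in the min-max. For $f\in F$, define $\iota f$ on $\widetilde\Gamma$ by keeping $f$ unchanged on every edge other than $e_0$, and on the lengthened edge $e_0$ extending $f$ by the identically-zero function on the added portion (equivalently, rescaling: $f(x/\alpha)$ for $x$ in the rescaled part and — crucially — observing that $f$ vanishes together with its derivative at both endpoints of $e_0$, since $f\in\widetilde H_0^2(\Gamma)$, so this extension lies in $\widetilde H^2(\widetilde\Gamma)\cap H^1(\widetilde\Gamma)$ and still satisfies the Dirichlet and Kirchhoff conditions at every vertex of $\widetilde\Gamma$). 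Thus $\iota F\subset\widetilde H_0^2(\widetilde\Gamma)$ is $j$-dimensional. The Rayleigh quotient in Theorem~\ref{thm:minMaxKreinV} is
\[
 \frac{\int_{\widetilde\Gamma}\left|-(\iota f)''+\widetilde q\,\iota f\right|^2\dd x}{\int_{\widetilde\Gamma}|(\iota f)'|^2\dd x+\int_{\widetilde\Gamma}\widetilde q\,|\iota f|^2\dd x},
\]
and because the extension is chosen so that $-(\iota f)''+\widetilde q\,\iota f$, $(\iota f)'$ and $\widetilde q\,|\iota f|^2$ on the extra part of $e_0$ correspond, after the substitution $x\mapsto\alpha x$, exactly to the corresponding quantities for $f$ on $e_0$ (with the $\alpha$-factors from \eqref{lengthenedpotential} cancelling in the quotient), one gets the \emph{equality}
\[
 \frac{\int_{\widetilde\Gamma}\left|-(\iota f)''+\widetilde q\,\iota f\right|^2\dd x}{\int_{\widetilde\Gamma}|(\iota f)'|^2\dd x+\int_{\widetilde\Gamma}\widetilde q\,|\iota f|^2\dd x}
 =\frac{\int_{\Gamma}\left|-f''+q f\right|^2\dd x}{\int_{\Gamma}|f'|^2\dd x+\int_{\Gamma}q|f|^2\dd x}.
\]
Taking the maximum over $f\in F$ and then the minimum over such $F$ gives $\lambda_j^+(-\Delta_{{\rm K},\widetilde\Gamma,\widetilde q})\le\lambda_j^+(-\Delta_{{\rm K},\Gamma,q})$, which is \eqref{eq:lengthenedgeV+}. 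Finally \eqref{eq:lengthenedgeV} follows from \eqref{eq:lengthenedgeV+} by the index shift $\lambda_{j+V}(-\Delta_{{\rm K},\Gamma,q})=\lambda_j^+(-\Delta_{{\rm K},\Gamma,q})$ from \eqref{eq:Vkernel}, noting that lengthening an edge does not change the number of vertices $V$, so the same $V$ applies to both $\Gamma$ and $\widetilde\Gamma$.

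\emph{Main obstacle.} The delicate point is Step~2: verifying that the extension-by-rescaling map $\iota$ lands in $\widetilde H_0^2(\widetilde\Gamma)$ (the Dirichlet and Kirchhoff conditions at the endpoints of $e_0$ are what make this possible — this is precisely why we took $S$ with \emph{both} conditions) and that the scaling in \eqref{lengthenedpotential} makes the Rayleigh quotient literally invariant rather than merely comparable. One has to check carefully that $\dd x$, the two derivatives and the potential transform with compatible powers of $\alpha$: under $x=\alpha y$, $\dd x=\alpha\,\dd y$, $f'(x)=\alpha^{-1}\dot f(y)$, $f''(x)=\alpha^{-2}\ddot f(y)$, and $\widetilde q(x)=\alpha^{-2}q(y)$, so both numerator and denominator pick up an overall factor $\alpha^{-3}$ on the rescaled piece, which cancels; this bookkeeping is routine but must be done. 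A secondary subtlety is that the min-max in Theorem~\ref{thm:minMaxKreinV} requires the test subspace to be exactly $j$-dimensional, so one must confirm $\iota$ is injective on $F$, which is immediate since $\iota$ does not alter $f$ on the edges other than $e_0$ and $\Gamma$ is connected.
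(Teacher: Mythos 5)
Your overall strategy --- transplant a $j$-dimensional test space from $\widetilde H_0^2(\Gamma)$ into $\widetilde H_0^2(\widetilde\Gamma)$ and invoke the min-max principle of Theorem~\ref{thm:minMaxKreinV} --- is exactly the paper's, but the transplantation map you choose does not work, and the step where you claim it lands in $\widetilde H_0^2(\widetilde\Gamma)$ contains a genuine error. You assert that $f\in\widetilde H_0^2(\Gamma)$ ``vanishes together with its derivative at both endpoints of $e_0$.'' This is false: by \eqref{eq:SV} the Kirchhoff condition $\partial_\nu f(v)=0$ only forces the \emph{sum} of the derivatives over all edges incident with $v$ to vanish, not the individual derivative $f_{e_0}'$ at that endpoint (unless the endpoint has degree one). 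Consequently your extension-by-zero generally has a jump in the first derivative at the interior gluing point of the lengthened edge, so it is not in $\widetilde H^2(\widetilde\Gamma)$ at all; and even ignoring that, it replaces $\partial f_{e_0}(v_2)$ by $0$ at the far vertex, destroying the Kirchhoff balance there. Your ``equivalent'' alternative $\iota f(x)=f(x/\alpha)$ is not equivalent and fails for the same reason: it scales the endpoint derivatives by $\alpha^{-1}$, so the Kirchhoff sums at both endpoints of $e_0$ become $(\alpha^{-1}-1)\partial f_{e_0}(v)\neq 0$ in general.

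The paper's fix is the prefactor you dropped: set $\widetilde f_{e_0}(x)=\alpha f_{e_0}(x/\alpha)$, so that $\widetilde f_{e_0}'(0)=f_{e_0}'(0)$ and $\widetilde f_{e_0}'(\alpha\ell(e_0))=f_{e_0}'(\ell(e_0))$, preserving both Dirichlet and Kirchhoff conditions; this map is moreover a bijection onto $\widetilde H_0^2(\widetilde\Gamma)$. With this choice the Rayleigh quotient is \emph{not} invariant, contrary to your claimed equality: the $e_0$-contribution to the denominator is multiplied by $\alpha$ and the $e_0$-contribution to the numerator by $\alpha^{-1}$, so the quotient can only decrease, which is all that is needed. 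Your bookkeeping in the final paragraph (both numerator and denominator picking up $\alpha^{-3}$) is incorrect even for your own map: with $\iota f(x)=f(x/\alpha)$ one gets $\alpha^{-3}$ in the numerator but only $\alpha^{-1}$ in the denominator of the $e_0$-terms, and since both \emph{decrease} the resulting comparison with the original quotient can go the wrong way (e.g.\ when the $e_0$-contribution to the numerator is negligible). Part (b) of your argument (the index shift by $V$, unchanged under lengthening) is fine once (a) is repaired.
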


\begin{proof}
Suppose that an edge $e_0$ of $\Gamma$ is lengthened by a factor of $\alpha>1$. Given $f\in\widetilde{H}_0^2(\Gamma)$, 
let  $\widetilde{f}$ be the function such that $\widetilde{f}_{e_0} (x) = \alpha f_{e_0} (x/\alpha)$ and  $\widetilde{f}_e (x) = f_e (x)$ for all other edges $e$. 
Now, $\widetilde{f}_{e_0}(0)=\widetilde{f}_{e_0}( \ell (e_0))=0$, preserving the Dirichlet conditions, 
and $\widetilde{f}_{e_0}'(0)=f_{e_0}'(0)$, $\widetilde{f}_{e_0}'( \ell (e_0)) = f_{e_0}' (\ell (e_0))$, preserving the Kirchhoff conditions, 
whence $\widetilde{f}\in\widetilde{H}_0^2(\widetilde{\Gamma})$. 
 Notice that $\widetilde{H}_0^2({\Gamma})\to\widetilde{H}_0^2(\widetilde{\Gamma}):f\mapsto\widetilde{f}$ is a bijection. Then
\begin{align*}
\int_{0}^{\widetilde \ell (e_0)} |\widetilde{f}_{e_0}'|^2 \dd x + \int_{0}^{\widetilde \ell (e_0)}\widetilde{q}_{e_0}|\widetilde{f}_{e_0}|^2\dd x &= \alpha \int_0^{\ell (e_0)} |f_{e_0}'|^2\dd x + \alpha \int_0^{\ell (e_0)} q_{e_0} |f_{e_0}|^2 \dd x,
\end{align*}
and
\begin{align*}
 \int_{0}^{\widetilde \ell (e_0)} \left|-\widetilde{f}_{e_0}'' + \widetilde{q}_{e_0}\widetilde{f}_{e_0} \right|^2 \dd x &= \frac{1}{\alpha} \int_0^{\ell (e_0)} \left|-f_{e_0}'' + q_{e_0} f_{e_0} \right|^2 \dd x,
\end{align*}
recalling that the potential is redefined by \eqref{lengthenedpotential} on the lengthened edge. Thus
\begin{equation*}
\frac{\int_{\widetilde{\Gamma}}|-\widetilde{f}''+\widetilde{q}\widetilde{f}|^2 \dd x}{\int_{\widetilde{\Gamma}}|\widetilde{f}'|^2\dd x + \int_{\widetilde{\Gamma}}\widetilde{q}|\widetilde{f}|^2\dd x}\leq\frac{ \int_\Gamma |-f''+qf|^2 \dd x}{\int_\Gamma |f'|^2 \dd x+\int_\Gamma q|f|^2 \dd x}.
\end{equation*}
Inequality \eqref{eq:lengthenedgeV+} follows from Theorem \ref{thm:minMaxKreinV}, and then \eqref{eq:lengthenedgeV} from \eqref{eq:Vkernel} since the kernel of the operator is unchanged by the transformation.
\end{proof}

The remaining surgery operation deals with expanding the graph by inserting a new finite, connected metric graph $\Gamma_0$ in some way to the original graph. 
If there is a potential $q_0$ associated with $\Gamma_0$, then we assume that it satisfies the following hypothesis in agreement with what is assumed for $q$ on $\Gamma$.

\begin{hypothesis}\label{hyp0}
On the finite, connected metric graph $\Gamma_0$, the potential $q_0 : \Gamma_0 \to \R$ is measurable and bounded, and $q_0 (x) \geq 0$ holds for almost all $x \in \Gamma_0$.
\end{hypothesis}

As a rule, if no new potential is specified on the new edges, then it is reasonable to take the potential to be zero there. Nevertheless, the inequalities in Theorem~\ref{thm:attachV} hold for the potential chosen arbitrarily there under Hypothesis~\ref{hyp0}.

\begin{definition}
Let Hypothesis \ref{hyp} be satisfied, and let $\widetilde\Gamma$ be the graph formed from $\Gamma$ by gluing $m$ of the vertices of a finite, connected metric graph $\Gamma_0$ to distinct vertices of $\Gamma$.
The new potential $\widetilde q$ associated with $\widetilde\Gamma$ is identical to $q$ on the edges inherited from $\Gamma$ and satisfies Hypothesis \ref{hyp0} on the edges from $\Gamma_0$. The transformation $- \Delta_{{\rm K}, \Gamma, q}\mapsto- \Delta_{{\rm K}, \widetilde \Gamma, \widetilde q}$ is called \emph{attaching a (connected) graph to $\Gamma$ (by $m$ vertices)}. The inverse operation may be referred to as \emph{deleting a (connected) subgraph}; cf.\ Figure \ref{fig:attachV}.
\end{definition}

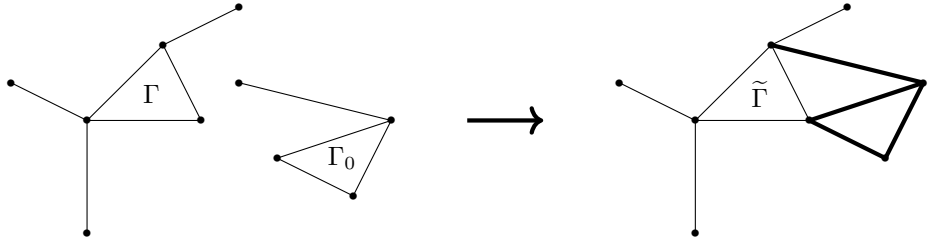
\begin{figure}[h]
    \centering
\begin{tikzpicture}

\node at (-5.5,0) {\tiny$\bullet$};
\node at (-4.5,1) {\tiny$\bullet$};
\node at (-4,0) {\tiny$\bullet$};
\node at (-6.5,0.5) {\tiny$\bullet$};
\node at (-5.5,-1.5) {\tiny$\bullet$};
\node at (-3.5,1.5) {\tiny$\bullet$};

\node at (-3,-0.5) {\tiny$\bullet$};
\node at (-2,-1) {\tiny$\bullet$};
\node at (-1.5,0) {\tiny$\bullet$};
\node at (-3.5,0.5) {\tiny$\bullet$};

\node at (-4.65,0) [yshift=10pt] {$\Gamma$}; 
\node at (-2.25,-0.5) [xshift=3pt] {$\Gamma_0$}; 

\draw[] (-5.5,0) -- (-4.5,1)  ;
\draw[] (-5.5,0) -- (-4,0)  ;
\draw[] (-4.5,1) -- (-4,0)  ;
\draw[] (-5.5,0) -- (-6.5,0.5)  ;
\draw[] (-5.5,0) -- (-5.5,-1.5)  ;
\draw[] (-4.5,1) -- (-3.5,1.5)  ;

\draw[] (-3,-0.5) -- (-2,-1) ;
\draw[] (-2,-1) -- (-1.5,0) ;
\draw[] (-3,-0.5) -- (-1.5,0) ;
\draw[] (-1.5,0) -- (-3.5,0.5) ;

\draw[->, ultra thick] (-0.5,0) -- (0.5,0)  ;

\node at (2.5,0) {\tiny$\bullet$};
\node at (3.5,1) {\tiny$\bullet$};
\node at (4,0) {\tiny$\bullet$};
\node at (1.5,0.5) {\tiny$\bullet$};
\node at (2.5,-1.5) {\tiny$\bullet$};
\node at (4.5,1.5) {\tiny$\bullet$};

\node at (5,-0.5) {\tiny$\bullet$};
\node at (5.5,0.5) {\tiny$\bullet$};

\node at (3.35,0) [yshift=10pt] {$\widetilde\Gamma$}; 

\draw[] (2.5,0) -- (3.5,1)  ;
\draw[] (2.5,0) -- (4,0)  ;
\draw[] (3.5,1) -- (4,0)  ;
\draw[] (2.5,0) -- (1.5,0.5)  ;
\draw[] (2.5,0) -- (2.5,-1.5)  ;
\draw[] (3.5,1) -- (4.5,1.5)  ;

\draw[ultra thick] (4,0) -- (5,-0.5)  ;
\draw[ultra thick] (5,-0.5) -- (5.5,0.5)  ;
\draw[ultra thick] (4,0) -- (5.5,0.5)  ;
\draw[ultra thick] (5.5,0.5) -- (3.5,1) ;

\end{tikzpicture}

    \caption{Attaching $\Gamma_0$ to $\Gamma$ by two vertices to form $\widetilde\Gamma$. The new edges in $\widetilde\Gamma$ are shown in bold.}
    \label{fig:attachV}
\end{figure}

\begin{theorem}[Attaching a graph]\label{thm:attachV}
Assume that Hypotheses \ref{hyp} and \ref{hyp0} hold. Let $\widetilde{\Gamma}$ be the graph formed by attaching $\Gamma_0$ to $\Gamma$ by $m$ vertices. Then for the corresponding perturbed Krein Laplacians: 
\begin{itemize}
\item[(a)] the positive eigenvalues satisfy
\begin{equation}\label{eq:attachV+}
 \widetilde\lambda_j^+\leq\lambda_j^+, \qquad j \in \N;
\end{equation}
\item[(b)] the eigenvalues (counting ground states) satisfy
\begin{equation}\label{eq:attachV}
\widetilde\lambda_{j + V_0 - m} \leq \lambda_j, \qquad j \in \N;
\end{equation}
here $V_0$ is the number of vertices of $\Gamma_0$.
\end{itemize}
\end{theorem}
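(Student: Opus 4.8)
The plan is to derive part (a) directly from the variational principle of Theorem~\ref{thm:minMaxKreinV} applied to both $-\Delta_{{\rm K},\Gamma,q}$ and $-\Delta_{{\rm K},\widetilde\Gamma,\widetilde q}$, and then to obtain part (b) from (a) together with the kernel dimension formula~\eqref{eq:Vkernel}. For (a), the crucial ingredient is a linear injection $\widetilde H_0^2(\Gamma)\to\widetilde H_0^2(\widetilde\Gamma)$, $f\mapsto\widetilde f$, that preserves the Rayleigh quotient occurring in Theorem~\ref{thm:minMaxKreinV}: one takes $\widetilde f$ to equal $f$ on the edges inherited from $\Gamma$ and to be identically zero on the edges of $\Gamma_0$.

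First I would check that $\widetilde f$ indeed lies in $\widetilde H_0^2(\widetilde\Gamma)$. On each edge $\widetilde f$ is $H^2$, being either $f_e$ or $0$. At a vertex of $\widetilde\Gamma$ coming from $\Gamma$ and not involved in the gluing, all conditions on $\widetilde f$ reduce to those already satisfied by $f$; at a vertex of $\widetilde\Gamma$ coming from $\Gamma_0$ and not involved in the gluing, $\widetilde f\equiv0$ in a neighbourhood and the conditions hold trivially. At a vertex $v$ obtained by gluing $v'\in\cV(\Gamma)$ with $v''\in\cV(\Gamma_0)$: continuity of $\widetilde f$ at $v$ holds because $f(v')=0$ (as $f\in\widetilde H_0^2(\Gamma)$) and $\widetilde f$ vanishes on the $\Gamma_0$-side, and the Dirichlet condition $\widetilde f(v)=0$ follows for the same reason; the Kirchhoff condition $\partial_\nu\widetilde f(v)=0$ holds because the sum of incoming derivatives at $v$ splits into the contribution from the edges of $\Gamma$, which equals $\partial_\nu f(v')=0$, and the contribution from the edges of $\Gamma_0$, which vanishes since $\widetilde f\equiv0$ there. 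The map is injective (restrict back to $\Gamma$). Since $\widetilde f\equiv0$ on $\Gamma_0$ and $\widetilde q=q$ on the edges of $\Gamma$,
\begin{equation*}
\frac{\int_{\widetilde\Gamma}\bigl|-\widetilde f''+\widetilde q\,\widetilde f\bigr|^2\,\dd x}{\int_{\widetilde\Gamma}|\widetilde f'|^2\,\dd x+\int_{\widetilde\Gamma}\widetilde q\,|\widetilde f|^2\,\dd x}=\frac{\int_{\Gamma}\bigl|-f''+q f\bigr|^2\,\dd x}{\int_{\Gamma}|f'|^2\,\dd x+\int_{\Gamma}q\,|f|^2\,\dd x}
\end{equation*}
for every $f\in\widetilde H_0^2(\Gamma)\setminus\{0\}$, the denominators being positive by the lower bound $\mu$ (after integration by parts). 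Hence, for any $j$-dimensional subspace $F\subset\widetilde H_0^2(\Gamma)$ its image is a $j$-dimensional subspace of $\widetilde H_0^2(\widetilde\Gamma)$ with the same maximal Rayleigh quotient, so $\widetilde\lambda_j^+\leq\max_{0\neq f\in F}[\,\cdot\,]$; taking the infimum over $F$ and invoking Theorem~\ref{thm:minMaxKreinV} yields~\eqref{eq:attachV+}.

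For (b) I would use that the number of vertices of $\widetilde\Gamma$ is $\widetilde V:=V+V_0-m$, so by~\eqref{eq:Vkernel} one has $\widetilde\lambda_{j+\widetilde V}=\widetilde\lambda_j^+$ and $\lambda_{j+V}=\lambda_j^+$ for all $j\in\N$, while $\lambda_1=\dots=\lambda_V=0$ and $\widetilde\lambda_1=\dots=\widetilde\lambda_{\widetilde V}=0$. If $j\leq V$ then $j+V_0-m\leq\widetilde V$, hence $\widetilde\lambda_{j+V_0-m}=0=\lambda_j$; if $j=i+V$ with $i\geq1$, then $\widetilde\lambda_{j+V_0-m}=\widetilde\lambda_{i+\widetilde V}=\widetilde\lambda_i^+\leq\lambda_i^+=\lambda_{i+V}=\lambda_j$ by part~(a). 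This proves~\eqref{eq:attachV}.

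The only step that is not pure bookkeeping is the verification that $f\mapsto\widetilde f$ maps into $\widetilde H_0^2(\widetilde\Gamma)$ — in particular that the Kirchhoff (sum-of-derivatives) condition survives at the glued vertices — so that is the point I would treat most carefully; the rest is a routine application of the min-max principle and of the kernel dimension formula.
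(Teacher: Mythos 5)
Your proposal is correct and follows exactly the paper's argument: extension by zero of functions in $\widetilde H_0^2(\Gamma)$ into $\widetilde H_0^2(\widetilde\Gamma)$ preserves the Rayleigh quotient, giving (a) via the min-max principle of Theorem~\ref{thm:minMaxKreinV}, and (b) follows from the kernel dimension count \eqref{eq:Vkernel} since the kernel grows by $V_0-m$. The extra detail you supply — the verification that the Dirichlet and Kirchhoff conditions survive at glued vertices, and the explicit index bookkeeping — is accurate and merely fills in steps the paper leaves implicit.
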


\begin{proof}
Every function in $\widetilde H_0^2 (\Gamma)$ can be extended by zero to a function in $\widetilde H_0^2 (\widetilde{\Gamma})$, and this does not change the Rayleigh quotient. Thus inequality \eqref{eq:attachV+} follows from Theorem~\ref{thm:minMaxKreinV}. Finally, \eqref{eq:attachV} is obtained from  \eqref{eq:Vkernel}, since the dimension of the kernel of the operator increases by $V_0 - m$.
\end{proof}

A special case of the previous theorem consists of inserting a single edge between two vertices of $\Gamma$, a process which does not change the dimension of the kernel of the perturbed Krein Laplacian.

\begin{corollary}[Inserting an edge between existing vertices]\label{cor:addedgeV}
Let Hypothesis \ref{hyp} hold, and let $\widetilde{\Gamma}$ be the graph formed by inserting an edge $e_0$ between two (not necessarily distinct) vertices of $\Gamma$. Assume that the potential $q_0$ on $\Gamma_0=e_0$ satisfies Hypothesis \ref{hyp0}. Then the eigenvalues (counting ground states) of the corresponding perturbed Krein Laplacians satisfy
\begin{align*}
\widetilde \lambda_j \leq  \lambda_j , \qquad j \in \N.
\end{align*}
\end{corollary}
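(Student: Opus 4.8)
The plan is to obtain this immediately from the surgery principle for attaching a graph, Theorem~\ref{thm:attachV}, by recognising that inserting a single edge is the special case of attaching a suitable connected metric graph $\Gamma_0$ for which the number of vertices is unchanged.

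Concretely, I would split into two cases. If the inserted edge $e_0$ joins two \emph{distinct} vertices $v_1, v_2$ of $\Gamma$, I take $\Gamma_0 := e_0$, viewed as the interval $[0,\ell(e_0)]$ with its two endpoints as vertices, carrying the potential $q_0$, and attach it to $\Gamma$ by its $m = 2$ vertices, identifying the endpoints with $v_1$ and $v_2$; here $V_0 = 2 = m$. If instead $e_0$ is a loop at a single vertex $v_1$, I take $\Gamma_0$ to be the loop graph of length $\ell(e_0)$ (one vertex, one loop edge, potential $q_0$) and attach it to $\Gamma$ by its $m = 1$ vertex, identifying it with $v_1$; here $V_0 = 1 = m$. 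In both cases the resulting graph is exactly $\widetilde\Gamma$, Hypothesis~\ref{hyp0} for $\Gamma_0$ is precisely the hypothesis imposed on $q_0$ in the statement, and $V_0 - m = 0$. Applying inequality~\eqref{eq:attachV} of Theorem~\ref{thm:attachV}(b) then gives $\widetilde\lambda_j = \widetilde\lambda_{j + V_0 - m} \leq \lambda_j$ for all $j \in \N$, which is the claim. Equivalently, one may observe directly that the vertex count, and hence $\dim \ker(-\Delta_{{\rm K},\cdot})$ by~\eqref{eq:Vkernel}, is the same for $\Gamma$ and $\widetilde\Gamma$, so the shift by $V_0 - m$ in~\eqref{eq:attachV} disappears.

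There is no genuine difficulty here; the one place that warrants care is the coincident-vertices case, where $\Gamma_0$ must be modelled as the one-vertex loop graph rather than as an interval glued to $\Gamma$ along a single point — the latter falls outside the ``gluing along distinct vertices'' allowed in the definition of attaching a graph, whereas the former fits it with $V_0 = m = 1$. Once this identification is in place, the corollary is a one-line consequence of Theorem~\ref{thm:attachV}, and in particular of the extension-by-zero argument underlying it.
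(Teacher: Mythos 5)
Your proposal is correct and is exactly the argument the paper intends: the corollary is stated as a special case of Theorem~\ref{thm:attachV} with $V_0 = m$, so the index shift $V_0 - m$ in~\eqref{eq:attachV} vanishes. Your careful treatment of the loop case (modelling $\Gamma_0$ as a one-vertex loop graph with $m = 1$ rather than an interval glued at a single point) is a worthwhile clarification of a detail the paper leaves implicit, but it does not change the route.
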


We emphasise that this behaviour differs substantially from the one for standard vertex conditions, where inserting an edge may either increase or decrease eigenvalues; cf.\ \cite{KMN13}.

\section{Isoperimetric inequalities}\label{sec:isoperimetric}

We now turn to estimates for the positive eigenvalues of the perturbed Krein Laplacian. We start with a lower estimate for the first positive eigenvalue, which we may call the {\em spectral gap}; cf.\ Remark \ref{rem:gap} below.

\begin{theorem}\label{thm:upperBound}
Assume Hypothesis \ref{hyp}, and denote by $\ell (\Gamma)$ the total length of $\Gamma$. Furthermore, let $\Lambda$ be the loop of length $\ell (\Gamma)$. If $q$ is not identically zero on $\Gamma$ then
\begin{align}\label{eq:lowerBoundV}
 \lambda_1^+ (- \Delta_{{\rm K}, \Gamma, q}) > \lambda_1^+ (- \Delta_{\delta, \Lambda, I})
\end{align}
holds, where $- \Delta_{\delta,\Lambda, I}$ is the Laplacian on $\Lambda$ with a $\delta$-interaction of strength $I := \int_\Gamma q \,\dd x$ at one (arbitrary) point. In the potential-free case,
\begin{equation}\label{eq:1stlowerBound}
 \lambda_1^+ (- \Delta_{{\rm K}, \Gamma}) \geq \lambda_1^+ (- \Delta_{\rm st, \Lambda}) = 4 \left(\frac{\pi}{\ell (\Gamma)}\right)^2.
\end{equation}
Equality in \eqref{eq:1stlowerBound} holds if and only if $\Gamma$ is an interval, a loop, an equilateral 2-cycle, or an equilateral figure-8.
\end{theorem}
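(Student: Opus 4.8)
The plan is to reduce both inequalities, via the surgery principle for gluing vertices, to a statement about flower graphs, where the perturbed Krein Laplacian coincides with the standard Laplacian (Corollary~\ref{cor:flower}) and can be analysed explicitly, with the loop appearing as the extremal case. Concretely, let $\widehat\Gamma$ be the flower graph obtained from $\Gamma$ by identifying all of its $V$ vertices into a single vertex (nothing happens if $\Gamma$ is already a flower); the petals of $\widehat\Gamma$ are exactly the edges of $\Gamma$, so $\ell(\widehat\Gamma)=\ell(\Gamma)=:L$. By Theorem~\ref{thm:gluingV}(a) (with $k=V-1$) one has $\lambda_1^+(-\Delta_{{\rm K},\Gamma})\ge\lambda_1^+(-\Delta_{{\rm K},\widehat\Gamma})=\lambda_1^+(-\Delta_{{\rm st},\widehat\Gamma})$, so it suffices to prove $\lambda_1^+(-\Delta_{{\rm st},\widehat\Gamma})\ge 4(\pi/L)^2$ for every flower of total length $L$. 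For a flower with petal lengths $\ell_1\ge\dots\ge\ell_E$, $\sum_j\ell_j=L$, a direct separation of variables shows that $\kappa^2>0$ is an eigenvalue of $-\Delta_{{\rm st},\widehat\Gamma}$ precisely if $\kappa$ solves $g(\kappa):=\sum_{j=1}^E\tan(\kappa\ell_j/2)=0$ (eigenfunctions not vanishing at the vertex) or if all the $\kappa\ell_j$ are integer multiples of $\pi$ obeying a parity/matching condition forced by the Kirchhoff relation (eigenfunctions vanishing at the vertex). A short monotonicity analysis of $g$ — it is positive between $0$ and its first pole $\pi/\ell_1$, strictly increasing thereafter, and $g(2\pi/L)\le 0$ when $\ell_1>L/2$ by superadditivity of $\tan$ on $(0,\pi/2)$ (using $\sum_j\pi\ell_j/L=\pi$) — shows the smallest positive root of $g$ is at least $2\pi/L$, and the same bound holds for the vanishing-at-the-vertex modes. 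Since the loop $\Lambda$ is the flower with $E=1$ and satisfies $\lambda_1^+(-\Delta_{{\rm st},\Lambda})=(2\pi/L)^2$, this proves \eqref{eq:1stlowerBound}.

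\textit{The equality cases.} Equality in \eqref{eq:1stlowerBound} forces equality in $\lambda_1^+(-\Delta_{{\rm K},\Gamma})\ge\lambda_1^+(-\Delta_{{\rm K},\widehat\Gamma})$ together with $\lambda_1^+(-\Delta_{{\rm st},\widehat\Gamma})=4(\pi/L)^2$. The secular analysis shows the latter holds only when $\widehat\Gamma$ has one or two petals, i.e.\ $\Gamma$ has at most two edges; for the former, the minimiser of the Rayleigh quotient of Theorem~\ref{thm:minMaxKreinV} on $\widehat\Gamma$ (an explicit trigonometric polynomial, a primitive of a first circle harmonic) must already lie in $\widetilde H_0^2(\Gamma)$, i.e.\ it must satisfy the Kirchhoff relation at each individual vertex of $\Gamma$, not merely their sum. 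Running through the finitely many graphs with one or two edges and checking this against the vertex conditions of Proposition~\ref{prop:KreinV} leaves exactly the interval, the loop, the equilateral 2-cycle and the equilateral figure-8 (for the interval one may alternatively invoke the explicit computation of Example~\ref{ex:glueInterval}).

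\textit{The case $q\not\equiv 0$.} The gluing step is valid for nonnegative potentials, so $\lambda_1^+(-\Delta_{{\rm K},\Gamma,q})\ge\lambda_1^+(-\Delta_{{\rm K},\widehat\Gamma,q})$ with $\widehat\Gamma$ the flower carrying the (unchanged) potential. Using the Rayleigh quotient $\int_{\widehat\Gamma}|-f''+qf|^2\,\dd x\big/\big(\int_{\widehat\Gamma}|f'|^2\,\dd x+\int_{\widehat\Gamma}q|f|^2\,\dd x\big)$ and the fact that among nonnegative potentials of fixed mass $I=\int_\Gamma q\,\dd x$ a Dirac mass is the most concentrated, so that pushing the mass of $q$ onto the vertex can only lower the relevant eigenvalue, one compares with the loop $\Lambda$ carrying the single $\delta$-interaction of strength $I$ to obtain $\lambda_1^+(-\Delta_{{\rm K},\Gamma,q})\ge\lambda_1^+(-\Delta_{\delta,\Lambda,I})$; the inequality is strict precisely because a nontrivial $q$ is strictly less concentrated than $I\delta$ (equivalently, one combines the strict inequality $\lambda_1^+(-\Delta_{{\rm K},\Gamma,q})>\lambda_1(-\Delta_{{\rm D},\Gamma,q})$, valid when $q\not\equiv0$, with a comparison of the perturbed Dirichlet eigenvalue against that of the $\delta$-loop).

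\textit{Expected main difficulty.} The technical core is the flower estimate $\lambda_1^+(-\Delta_{{\rm st},\widehat\Gamma})\ge(2\pi/L)^2$, which rests on the elementary but fiddly monotonicity/superadditivity analysis of the secular function, and above all the identification of the equality cases, where one must show the gluing inequality is tight for no graph beyond the four listed; the $q\not\equiv0$ part additionally requires making precise, and proving strict, the variational comparison between a genuine nonnegative potential and the $\delta$-interaction of equal total mass.
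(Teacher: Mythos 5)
Your opening move --- gluing all vertices of $\Gamma$ into a flower $\widehat\Gamma$ and combining Theorem~\ref{thm:gluingV} with Corollary~\ref{cor:flower} --- is exactly the paper's first step. You then diverge: the paper cuts the flower's single even-degree vertex along an Eulerian cycle and quotes the surgery principle for the \emph{standard} Laplacian from \cite{RS20} to land on the loop, whereas you analyse the flower's secular function $\sum_j\tan(\kappa\ell_j/2)$ directly. Your superadditivity argument for the non-vanishing modes is essentially sound, but the vanishing-at-the-vertex modes are only asserted, and in any case this route is more laborious than the paper's.

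The first genuine gap is the case $q\not\equiv0$. You propose to ``push the mass of $q$ onto the vertex'' in the Krein Rayleigh quotient on the flower, but Corollary~\ref{cor:flower} identifies the Krein and standard Laplacians on a flower only when $q\equiv0$, and the numerator $\int_\Gamma|-f''+qf|^2\,\dd x$ does not respond monotonically to concentrating $q$; no such comparison is available at the Krein level. The paper instead first descends to the standard Laplacian on the Eulerian loop, where the form is $\int|f'|^2+\int q|f|^2$, and only then runs the concentration argument of \cite{KKT16} on an eigenfunction, with strictness extracted by a separate contradiction. (Your parenthetical alternative, $\lambda_1^+(-\Delta_{{\rm K},\Gamma,q})\ge\lambda_1(-\Delta_{{\rm D},\Gamma,q})\ge\pi^2/\ell(\Gamma)^2>\lambda_1^+(-\Delta_{\delta,\Lambda,I})$, would in fact work and is simpler than the paper's argument, but you neither prove the final strict inequality nor need the unproven strictness you invoke in the first step.)

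The second and more serious gap is the classification of equality cases, which you reduce to ``running through the finitely many graphs with one or two edges'' without doing so --- and your criteria cannot deliver the stated answer. When $\Gamma$ is itself a figure-8 the condition that the minimiser on $\widehat\Gamma$ lie in $\widetilde H_0^2(\Gamma)$ is vacuous, since $\Gamma=\widehat\Gamma$; and the secular function $\tan(\pi\ell_1/\ell(\Gamma))+\tan(\pi\ell_2/\ell(\Gamma))$ vanishes at $\kappa=2\pi/\ell(\Gamma)$ for \emph{every} figure-8 with $\ell_1+\ell_2=\ell(\Gamma)$, not only the equilateral one; one checks directly that $f_j(x)=\cos(\kappa x)+\tan(\kappa\ell_j/2)\sin(\kappa x)$ is continuous and Kirchhoff at the vertex. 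So your sieve leaves all figure-8s (and, by descending the associated buckling eigenfunction, all 2-cycles) in the list, and the claimed ``equilateral'' conclusion does not follow from your argument. This is precisely the delicate point: the paper's own treatment rests on the assertion that a first eigenfunction on the loop takes each of its values at two antipodal points, and any correct proof of the equality statement must do genuine work here rather than defer it to an unperformed case check.
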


\begin{proof}
For any potential satisfying Hypothesis \ref{hyp}, let $\widetilde \Gamma$ be the flower graph formed from $\Gamma$ by gluing all vertices. Then by Theorem~\ref{thm:gluingV} and Theorem~\ref{thm:inequalitiesV}, we have
\begin{align}\label{eq:a}
 \lambda_1^+ (- \Delta_{{\rm K}, \Gamma, q}) \geq \lambda_1^+ (- \Delta_{{\rm K}, \widetilde \Gamma, q}) \geq \lambda_1^+ (- \Delta_{{\rm st}, \widetilde \Gamma, q}).
\end{align}
Moreover, as the only vertex of $\widetilde \Gamma$ has even degree (equal to twice the number of edges), we may cut through the vertex in such a way that we obtain an (Eulerian) cycle~$\Lambda$ of length $\ell (\Gamma)$, and by surgery principles for the perturbed standard Laplacian $- \Delta_{{\rm st}, \widetilde \Gamma, q}$, see e.g. \cite[Theorem~4.1]{RS20},  we get
\begin{align}\label{eq:b}
 \lambda_1^+ (- \Delta_{{\rm st}, \widetilde \Gamma, q}) \geq \lambda_1^+ (- \Delta_{{\rm st}, \Lambda, q}).
\end{align}
In the case $q = 0$ identically, we now obtain \eqref{eq:1stlowerBound} from a direct calculation. If $q$ is nontrivial, then $I > 0$, and for both operators $- \Delta_{{\rm st}, \Lambda, q}$ and $- \Delta_{\delta, \Lambda, I}$, the smallest eigenvalue is positive. Hence we may argue further as in the proof of~\cite[Theorem~1]{KKT16}: let $\psi$ be an eigenfunction of $- \Delta_{{\rm st}, {\Lambda}, q}$ corresponding to its lowest eigenvalue. Then
\begin{align}\label{eq:extra}
 \lambda_1^+ (- \Delta_{{\rm st}, \Lambda, q}) = \frac{\int_{\Lambda} |\psi'|^2  \dd x + \int_{\Lambda} q |\psi|^2 \dd x}{\int_{\Lambda} |\psi|^2 \dd x} \geq \frac{\int_{\Lambda} |\psi'|^2  \dd x + \int_{\Lambda} q |\psi (x_{\min})|^2 \dd x}{\int_{\Lambda} |\psi|^2 \dd x},
\end{align}
where $x_{\min}$ is any point on ${\Lambda}$ where $|\psi|$ takes its minimum. Since the last quotient is the Rayleigh quotient of the Laplacian with a $\delta$-vertex condition of strength $I = \int_\Gamma q \dd x$ at $x_{\min}$, it follows that $\lambda_1^+ (- \Delta_{{\rm K}, \Gamma, q}) \geq \lambda_1^+ (- \Delta_{\delta, \Lambda, I})$. However, equality would imply equality in \eqref{eq:extra}, which in turn would yield $|\psi| = |\psi (x_{\rm min})|$ almost everywhere on the support of $q$. At the same time, equality also implies that $\psi$ is an eigenfunction of $- \Delta_{\delta, \Lambda, I}$, and then $\psi = 0$ on the support of $q$; since $\Lambda$ is a loop, this would imply $\psi = 0$ identically, a contradiction. This proves~\eqref{eq:lowerBoundV}.

In the case of equality in~\eqref{eq:1stlowerBound}, all of the inequalities in \eqref{eq:a} and \eqref{eq:b} must in fact be equalities. In particular, the standard Laplacian on the flower graph $\widetilde \Gamma$ in the above argument already has to have $4 \pi^2 / \ell (\widetilde \Gamma)^2$ as its first positive eigenvalue, which is only possible if on the loop $\Lambda$ resulting from splitting the central vertex of $\widetilde \Gamma$, there exists an eigenfunction for the first positive eigenvalue which has the same value at each point that was glued together previously (cf.\ \cite[Theorem 1]{KMN13}). Since each eigenfunction of $- \Delta_{{\rm st}, \Lambda}$ corresponding to the first nonzero eigenvalue takes each of its values exactly twice on the loop --- at two points with distance $\ell (\Gamma) / 2$ from each other --- it follows that $\widetilde \Gamma$ can be recovered from $\Lambda$ by gluing at most two points. Hence $\widetilde \Gamma$ is either a loop itself or an equilateral figure-8. In other words, joining all vertices in the original graph $\Gamma$ leads to a loop or a figure-8, and this is only possible if $\Gamma$ is of one of the following six types: an interval, a path graph with two equal edges, a loop, an equilateral 2-cycle or an equilateral figure-8. Considering these graphs only, one finds by calculation that there exist eigenfunctions with corresponding eigenvalue $4 \pi^2 / \ell (\Gamma)^2$ if and only if $\Gamma$ is equilateral and has one of the four forms listed in the statement of the theorem. 
\end{proof}

\begin{remark}\label{rem:gap}
The interval $(0, 4 \pi^2 / \ell (\Gamma)^2)$ has empty intersection not only with the spectrum of the Krein Laplacian on one individual graph $\Gamma$. In fact, Theorem~\ref{thm:upperBound} asserts that, for fixed $\ell > 0$, the interval $(0, 4 \pi^2 / \ell^2)$ is free of spectrum for the Krein Laplacians on the whole class of metric graphs with total length $\ell$.
\end{remark}

\begin{remark}
Alternatively, one may use \eqref{eq:Friedlander} in combination with known lower bounds on the eigenvalues of the standard Laplacian to obtain lower bounds for the positive Krein Laplacian eigenvalues. However, using the optimal lower bound from \cite{F05}, one gets
\begin{align*}
 \lambda_1^+ (- \Delta_{{\rm K}, \Gamma}) \geq \lambda_2 (- \Delta_{{\rm st}, \Gamma}) \geq \frac{\pi^2}{\ell (\Gamma)^2},
\end{align*}
which is weaker than the sharp bound \eqref{eq:1stlowerBound}.
\end{remark}

\begin{remark}
The two crucial surgery operations used in the above proof are standard: gluing all vertices of a graph into one was used in \cite{KKMM16}, and cutting through vertices to obtain an Eulerian cycle goes back at least to \cite{N82}; see also \cite{KN14}. Nevertheless, the above proof is slightly unusual: for the standard Laplacian, gluing vertices increases eigenvalues (the positive ones, as well as counting the ground state) whilst cutting vertices decreases them, so that both surgery operations used above --- gluing all vertices into one and cutting vertices to obtain an Eulerian cycle --- cannot be used within the same argument. However, in the present situation this works smoothly since gluing is performed on the positive eigenvalues of the perturbed Krein Laplacian and cutting is done only after transition to standard vertex conditions.
\end{remark}

We point out that the exact same proof also yields an estimate for higher eigenvalues in the potential-free case:

\begin{theorem}
Assume that Hypothesis \ref{hyp} is satisfied with $q = 0$ identically, and that $\Lambda$ is a loop with the same length as for $\Gamma$. Then
\begin{align*}
 \lambda_j^+ (- \Delta_{{\rm K}, \Gamma}) \geq \lambda_j^+ (- \Delta_{\rm st, \Lambda}) = \lambda_{j + 1} (- \Delta_{\rm st, \Lambda})
\end{align*}
holds for all $j \in \N$.
\end{theorem}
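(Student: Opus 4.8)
The plan is to rerun the proof of Theorem~\ref{thm:upperBound} word for word, tracking the index $j$ instead of setting $j = 1$; the hypothesis $q \equiv 0$ is precisely what keeps every step available for arbitrary $j$.

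First I would glue all the vertices of $\Gamma$ into a single vertex, producing a flower graph $\widetilde\Gamma$ of the same total length $\ell(\Gamma)$. By the first interlacing inequality in Theorem~\ref{thm:gluingV}(a), gluing vertices does not raise the positive eigenvalues of the Krein Laplacian, so
\[
 \lambda_j^+(-\Delta_{{\rm K},\Gamma}) \geq \lambda_j^+(-\Delta_{{\rm K},\widetilde\Gamma}), \qquad j \in \N,
\]
and then the inequality \eqref{eq:standardKreinV} of Theorem~\ref{thm:inequalitiesV}, applied on $\widetilde\Gamma$, gives $\lambda_j^+(-\Delta_{{\rm K},\widetilde\Gamma}) \geq \lambda_{j+1}(-\Delta_{{\rm st},\widetilde\Gamma})$. (Equivalently, one may invoke Corollary~\ref{cor:flower} to identify $-\Delta_{{\rm K},\widetilde\Gamma}$ with $-\Delta_{{\rm st},\widetilde\Gamma}$ outright.)

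Next, since the unique vertex of $\widetilde\Gamma$ has even degree, I would cut it open, choosing the pairing of edge-ends so that the result is a single Eulerian cycle $\Lambda$ of length $\ell(\Gamma)$ --- exactly the manoeuvre used in Theorem~\ref{thm:upperBound}, justified by Euler's criterion. Cutting through a vertex does not raise the eigenvalues of the standard Laplacian (counting the ground state), see e.g.\ \cite[Theorem~4.1]{RS20}, so $\lambda_{j+1}(-\Delta_{{\rm st},\widetilde\Gamma}) \geq \lambda_{j+1}(-\Delta_{{\rm st},\Lambda})$. Finally, $-\Delta_{{\rm st},\Lambda}$ has a one-dimensional kernel (the constants, $\Lambda$ being connected), so $\lambda_{j+1}(-\Delta_{{\rm st},\Lambda}) = \lambda_j^+(-\Delta_{{\rm st},\Lambda})$, which simultaneously proves the equality asserted in the statement. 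Concatenating the displayed inequalities yields $\lambda_j^+(-\Delta_{{\rm K},\Gamma}) \geq \lambda_j^+(-\Delta_{{\rm st},\Lambda})$.

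The only step deserving a little attention --- and the one I would flag as the main (though essentially bookkeeping) obstacle --- is the passage from the flower graph $\widetilde\Gamma$ to the loop $\Lambda$: one should check that the standard-Laplacian cutting principle really compares $\lambda_{j+1}(-\Delta_{{\rm st},\widetilde\Gamma})$ with $\lambda_{j+1}(-\Delta_{{\rm st},\Lambda})$ at the \emph{same} index, i.e.\ that the splitting is realised as a finite sequence of elementary vertex cuts, each of which shifts no eigenvalue upward and, thanks to the Eulerian choice, preserves connectedness and hence the one-dimensionality of the kernel. With that in hand, nothing in the argument uses $j = 1$, and if one wants the right-hand side explicit one just replaces the computation $\lambda_1^+(-\Delta_{{\rm st},\Lambda}) = 4(\pi/\ell(\Gamma))^2$ of Theorem~\ref{thm:upperBound} by $\lambda_j^+(-\Delta_{{\rm st},\Lambda}) = \big(2\pi\lceil j/2\rceil/\ell(\Gamma)\big)^2$.
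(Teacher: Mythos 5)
Your proposal is correct and is essentially the paper's own argument: the paper proves this theorem by remarking that the proof of Theorem~\ref{thm:upperBound} carries over verbatim for higher indices in the potential-free case, i.e.\ glue all vertices into a flower graph (Theorem~\ref{thm:gluingV}(a)), identify the Krein and standard Laplacians there (Corollary~\ref{cor:flower}), and cut the central vertex into an Eulerian cycle using the standard-conditions surgery principle. Your bookkeeping remarks (connectedness of $\Lambda$ preserving the one-dimensional kernel, and the explicit value $\lambda_j^+(-\Delta_{{\rm st},\Lambda}) = \bigl(2\pi\lceil j/2\rceil/\ell(\Gamma)\bigr)^2$) are accurate.
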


We conclude this section with a remark on how to apply the min-max principle to get upper spectral bounds. We do not go far into this and discuss only, very briefly, the special case of graphs which contain Eulerian cycles. We restrict ourselves here to the potential-free case, although natural generalisations for potentials exist (but their formulation may be less pleasant).

\begin{remark}
Suppose that $\Gamma$  contains an Eulerian cycle $\Sigma$ (obtained by cutting through vertices and removing edges not on the cycle), and let $\cE_\Sigma\subseteq\cE$ denote the set of edges belonging to $\Sigma$. Then the function $f$ which on each $e\in\cE_\Sigma$ takes the form
\begin{equation*}
 f_{e}(x)=\pm\frac{\ell(e)}{n_e}\sin\left(\frac{n_e\pi x}{\ell(e)}\right),\qquad x\in [0,\ell(e)],
\end{equation*}
for some $n_e\in\mathbb{N}$, clearly satisfies Dirichlet conditions at all vertices of $\Sigma$, and, moreover, its derivatives have equal magnitude at all endpoints. Each $f_e$ contains $n_e/2$ periods of sine, and thus, by moving around the cycle, one can ensure that Kirchhoff conditions are satisfied at all vertices of $\Sigma$ by choosing appropriate signs for $f_e$ on adjacent pairs of edges; the only place where there could be a discrepancy is when one returns to the start of the cycle, as the function may end on a half-number of periods, but this problem is averted by imposing the further restriction that $\sum_{e\in\cE_\Sigma}n_e\in 2\mathbb{N}$. Now, $f$ satisfies Dirichlet-Kirchhoff conditions not only on $\Sigma$, but also on $\Gamma$, after extending it by zero on $\cE\backslash\cE_\Sigma$, so such functions provide upper estimates for the positive eigenvalues of $-\Delta_{{\rm K},\Gamma}$ via the min-max principle, Theorem \ref{thm:minMaxKreinV}. The Rayleigh quotient for this $f$ is
\begin{equation*}
R_{\rm K}[f]=\frac{\pi^2}{\ell (\Sigma)}\sum_{e\in\cE_\Sigma}\frac{n_e^2}{\ell(e)},
\end{equation*}
which is an explicit upper bound for the first positive eigenvalue; the maximum value of $R_{\rm K}[f]$ among $j$ linearly independent functions of this type gives an upper estimate for $\lambda_j^+ (-\Delta_{{\rm K},\Gamma})$.

Of course, it is true in general, even with potentials, that for $\Gamma$ containing an Eulerian cycle $\Sigma$, one has $\lambda_j^+ (-\Delta_{{\rm K},\Gamma,q}) \leq \lambda_j^+ (-\Delta_{{\rm K},\Sigma,\widetilde q})$, where $\widetilde q:=\left.q\right|_\Sigma$, due to Theorems \ref{thm:gluingV} and \ref{thm:attachV}.
\end{remark}

\section{More general perturbed Krein Laplacians}\label{sec:moreGeneral}

Thus far, we have studied the Krein extension of the symmetric perturbed Laplacian with Dirichlet and Kirchhoff conditions at all vertices, but the abstract theory of Krein extensions of symmetric operators allows one to extend this work 
to cover symmetric perturbed Laplacians with more general vertex conditions. In this section we illustrate this by considering perturbed Laplacians with ``Krein vertex conditions'' on a selected subset of the vertex set, and standard (continuity-Kirchhoff) vertex conditions at all further vertices. We indicate in which form the results of the previous sections carry over to this setting. The proofs are analogous in the present case and are mostly left to the reader.

Let Hypothesis \ref{hyp} be satisfied. For $\cB \subset \cV$, define the operator $S_\cB$ in $L^2 (\Gamma)$ by
\begin{align}\label{eq:SB}
\begin{split}
 (S_\cB f)_e & = - f_e'' + q_e f_e \qquad \text{on each edge}~e \in \cE, \\
 \dom S_\cB & = \Big\{ f \in \widetilde H^2 (\Gamma) \cap H^1 (\Gamma) : \partial_\nu f (v) = 0~\text{for each}~v \in \cV, \\
 & \qquad \qquad \qquad \qquad \qquad \quad f (v) = 0~\text{for each}~v \in \cB \Big\}.
\end{split}
\end{align}

\begin{remark}
A more general setting may be treated with the same methods, but we do not go into these details here: it is possible to replace the standard vertex conditions at the vertices in $\cV \setminus \cB$ by any self-adjoint, local vertex conditions. For the description of such conditions, we refer the reader to \cite{BK12}.
\end{remark}

\begin{remark}
The reader may think of the selected vertex set $\cB$ as a kind of boundary for $\Gamma$. One choice, which may be natural in some cases, is to let $\cB$ consist of all vertices of degree one. We are not restricted to this situation, but we may keep it in mind as a typical example.
\end{remark}

The operator $S_\cB$ in \eqref{eq:SB} is symmetric, closed, and densely defined. It has defect numbers $n_- = n_+ = |\cB|$, and is thus only self-adjoint if $\cB = \emptyset$. Furthermore, $S_\cB$ is clearly nonnegative, and its Friedrichs extension $S_{\cB, \rm F}$ is the perturbed Laplacian subject to Dirichlet boundary conditions on $\cB$ and standard vertex conditions on $\cV \setminus \cB$. In particular,
\begin{align}\label{eq:muB}
 (S_\cB f, f) \geq \mu \|f\|^2, \quad f \in \dom S_\cB
\end{align}
holds, where $\mu > 0$ may be chosen as the lowest eigenvalue of $S_{\cB, \rm F}$. The adjoint of $S_\cB$ equals
\begin{align*}
 (S_\cB^* f)_e & = - f_e'' + q_e f_e \qquad \text{on each edge}~e \in \cE, \\
 \dom S^* & = \left\{ f \in \widetilde H^2 (\Gamma) \cap H^1 (\Gamma) : \partial_\nu f (v) = 0~\text{for all}~v \in \cV \setminus \cB \right\}.
\end{align*}

Due to \eqref{eq:muB}, for nonempty $\cB \subset \cV$, we may consider the operator
\begin{align*}
 - \Delta_{{\rm K}, \Gamma, q, \cB} := S_{\cB, \rm K},
\end{align*}
the Krein--von Neumann extension of $S_\cB$. If $q = 0$ identically, we write $- \Delta_{{\rm K}, \Gamma, \cB}$.

To derive some properties of the operator $- \Delta_{{\rm K}, \Gamma, q, \cB}$, constructing an appropriate boundary triple is useful.

\begin{proposition}\label{prop:graphBTB}
Assume that Hypothesis \ref{hyp} is satisfied, and let $\cB \subset \cV$ be nonempty. Let $S_\cB$ be defined in \eqref{eq:SB}. For $f \in \dom S_\cB^*$, define
\begin{align*}
 \Gamma_0 f = \begin{pmatrix} f (v_1) \\ \vdots \\ f (v_b) \end{pmatrix} \quad \text{and} \quad \Gamma_1 f = \begin{pmatrix} - \partial_\nu f (v_1) \\ \vdots \\ - \partial_\nu f (v_b) \end{pmatrix},
\end{align*}
where $\cB = \{v_1, \dots, v_b\}$ (and $b = |\cB|$). Then $\{ \C^b, \Gamma_0, \Gamma_1\}$ is a boundary triple for $S_\cB^*$; in particular, $S_\cB$ has defect numbers
\begin{align*}
 n_- = n_+ = b.
\end{align*}
The corresponding extensions $A$ and $B$ of $S$ defined in~\eqref{eq:AB} are given by
\begin{align*}
 A = S_{\cB, \rm F} \quad \text{and} \quad B = - \Delta_{{\rm st}, \Gamma, q};
\end{align*}
in particular, $0 \in \rho (A)$. The value of the corresponding Weyl function at $\lambda = 0$ is $M_\cB (0) = - \Lambda_{q, \cB}$, where $\Lambda_{q, \cB}$ is the {\em Dirichlet-to-Neumann matrix for $\cB$} defined via the relation
\begin{align*}
 \begin{pmatrix} \partial_\nu f_* (v_1) \\ \vdots \\ \partial_\nu f_* (v_b) \end{pmatrix} = \Lambda_{q, \cB} \begin{pmatrix} f_* (v_1) \\ \vdots \\ f_* (v_b) \end{pmatrix},
\end{align*}
where $f_* \in \ker S_\cB^*$ is arbitrary. 
\end{proposition}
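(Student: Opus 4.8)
The plan is to check the two defining properties of a boundary triple for $S_\cB^*$ and then read off the remaining assertions from them. First I would record the form of $\dom S_\cB^*$ (already stated in the excerpt): since $\bigoplus_{e\in\cE} C_0^\infty(0,\ell(e))\subset\dom S_\cB$, any $f\in\dom S_\cB^*$ has $f_e\in H^2(0,\ell(e))$ with $S_\cB^* f=-f''+qf$, and an integration by parts shows that the boundary form $\sum_{v\in\cV}\big(\partial_\nu f(v)\overline{g(v)}-f(v)\overline{\partial_\nu g(v)}\big)$ must vanish for all $g\in\dom S_\cB$; using that each such $g$ is continuous, satisfies $\partial_\nu g\equiv 0$, and vanishes on $\cB$, this forces $f\in H^1(\Gamma)$ and $\partial_\nu f(v)=0$ for every $v\in\cV\setminus\cB$, the converse inclusion being immediate. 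In particular $\dom S\subset\dom S_\cB$, so $S\subset S_\cB$ and hence $\dom S_\cB^*\subset\dom S^*=\widetilde H^2(\Gamma)\cap H^1(\Gamma)$. The abstract Green identity then follows by the same integration by parts applied to $f,g\in\dom S_\cB^*$: the boundary contributions at the vertices in $\cV\setminus\cB$ drop out because $\partial_\nu f$ and $\partial_\nu g$ vanish there, and what remains is exactly $(\Gamma_1 f,\Gamma_0 g)_{\C^b}-(\Gamma_0 f,\Gamma_1 g)_{\C^b}$ once the sign in the definition of $\Gamma_1$ is taken into account.

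For the surjectivity of $\{\Gamma_0,\Gamma_1\}:\dom S_\cB^*\to\C^b\times\C^b$ I would piggyback on Proposition~\ref{prop:graphBT} rather than build test functions by hand. By that result $\{\C^V,\widehat\Gamma_0,\widehat\Gamma_1\}$, with $\widehat\Gamma_0 f=(f(v))_{v\in\cV}$ and $\widehat\Gamma_1 f=(-\partial_\nu f(v))_{v\in\cV}$, is a boundary triple for $S^*$, so $\{\widehat\Gamma_0,\widehat\Gamma_1\}$ is surjective onto $\C^V\times\C^V$. Given $(\xi,\eta)\in\C^b\times\C^b$, choose a target in $\C^V\times\C^V$ which prescribes $f(v_i)=\xi_i$ and $-\partial_\nu f(v_i)=\eta_i$ for $v_i\in\cB$ and $\partial_\nu f(v)=0$ for the remaining vertices; any preimage $f\in\dom S^*$ then automatically lies in $\dom S_\cB^*$ and satisfies $\Gamma_0 f=\xi$, $\Gamma_1 f=\eta$. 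Hence $\{\C^b,\Gamma_0,\Gamma_1\}$ is a boundary triple for $S_\cB^*$, and the defect numbers are $n_-=n_+=\dim\C^b=b$.

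It remains to identify the distinguished objects. We have $\ker\Gamma_0=\{f\in\dom S_\cB^*:f(v)=0\text{ for }v\in\cB\}$, which by the description of $\dom S_\cB^*$ is precisely the domain of the Friedrichs extension $S_{\cB,\rm F}$ (Dirichlet on $\cB$, standard on $\cV\setminus\cB$), so $A=S_{\cB,\rm F}$; and $0\in\rho(A)$ follows from \eqref{eq:muB}. Similarly $\ker\Gamma_1=\{f\in\dom S_\cB^*:\partial_\nu f(v)=0\text{ for }v\in\cB\}$, which combined with the constraint already built into $\dom S_\cB^*$ gives $\partial_\nu f(v)=0$ at \emph{all} vertices, i.e.\ $B=-\Delta_{{\rm st},\Gamma,q}$. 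For the Weyl function at $0$, note that since $0\in\rho(A)$ one has the decomposition $\dom S_\cB^*=\dom A\dotplus\ker S_\cB^*$, so $\Gamma_0$ restricts to a bijection of $\ker S_\cB^*$ onto $\C^b$ --- which is exactly what makes $\Lambda_{q,\cB}$ well-defined. Then for $f_*\in\ker S_\cB^*$ one has $\Gamma_1 f_*=-(\partial_\nu f_*(v_i))_{v_i\in\cB}=-\Lambda_{q,\cB}(f_*(v_i))_{v_i\in\cB}=-\Lambda_{q,\cB}\Gamma_0 f_*$, so $M_\cB(0)=-\Lambda_{q,\cB}$.

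The routine parts here are the edgewise integration by parts for the Green identity and the identification of $\dom S_\cB^*$. The one step that genuinely needs an idea is the surjectivity of $\{\Gamma_0,\Gamma_1\}$: one must prescribe Dirichlet and Neumann data on $\cB$ while simultaneously enforcing the Kirchhoff constraint at all vertices of $\cV\setminus\cB$. Reducing this to the surjectivity of the full boundary triple of Proposition~\ref{prop:graphBT}, as above, avoids an ad hoc construction of functions with prescribed vertex data and is where most of the care goes; everything else is bookkeeping.
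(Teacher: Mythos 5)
Your proof is correct and is exactly the argument the paper has in mind: the paper itself omits the proof (stating only that it is ``analogous'' to Proposition~\ref{prop:graphBT}, which is cited from the literature), and the intended route is precisely your direct verification of the Green identity by edgewise integration by parts together with the identification of $\dom S_\cB^*$, $\ker\Gamma_0$ and $\ker\Gamma_1$. Your reduction of the surjectivity of $\{\Gamma_0,\Gamma_1\}$ to that of the full triple $\{\C^V,\widehat\Gamma_0,\widehat\Gamma_1\}$ of Proposition~\ref{prop:graphBT} is a clean way to dispatch the only step that is not pure bookkeeping, and the sign bookkeeping in $\Gamma_1 f=-\partial_\nu f$ checks out.
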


\begin{remark}
The Weyl function $\lambda \mapsto M_\cB (\lambda)$ may be computed from the Weyl function $\lambda \mapsto M (\lambda)$ of the boundary triple in Proposition \ref{prop:graphBT}. Indeed, if we write $\cV = \{v_1, \dots, v_b, v_{b + 1}, \dots, v_V\}$, where the vertices are ordered such that the first $b$ of them form $\cB$, and write
\begin{align*}
 M (\lambda) = \begin{pmatrix} \widehat D (\lambda) & - B (\lambda)^\top \\ - B (\lambda) & \widehat L (\lambda) \end{pmatrix},
\end{align*}
where the block decomposition is taken according to the decomposition of the vertices into $\cB$ and $\cV \setminus \cB$, then we have
\begin{align*}
 M_\cB (\lambda) = \widehat D (\lambda) - B (\lambda)^\top \widehat L (\lambda)^{-1} B (\lambda).
\end{align*}
The proof is straightforward; for a special case it may be found in \cite[Proposition~3.1]{GR20}; see also \cite[Lemma 3.1]{KL20}. In particular, in the potential-free case, $- M_\cB (0)$ is the Schur complement of the weighted discrete Laplacian $L$ in \eqref{eq:L} with respect to decomposition of the vertices into $\cB$ and $\cV \setminus \cB$.
\end{remark}

From Proposition \ref{prop:graphBTB}, the following properties of $- \Delta_{{\rm K}, \Gamma, q, \cB}$ are immediate.

\begin{proposition}\label{prop:KreinB}
Assume that Hypothesis \ref{hyp} holds and that $\cB \subset \cV$ is nonempty. Then $- \Delta_{{\rm K}, \Gamma, q, \cB}$ acts as
\begin{align*}
 \big( - \Delta_{{\rm K}, \Gamma, q, \cB} f \big)_e = - f_e'' + q_e f_e \qquad \text{on each edge}~e \in \cE,
\end{align*}
and its domain consists of all $f \in \widetilde H^2 (\Gamma) \cap H^1 (\Gamma)$ such that
\begin{align*}
 \begin{pmatrix} \partial_\nu f (v_1) \\ \vdots \\ \partial_\nu f (v_d) \end{pmatrix} = \big( \widehat D - B^\top \widehat L^{-1} B \big) \begin{pmatrix} f (v_1) \\ \vdots \\ f (v_d) \end{pmatrix},
\end{align*}
where we have written
\begin{align*}
 \Lambda_{q, \cB} = \begin{pmatrix} \widehat D & - B^\top \\ - B & \widehat L \end{pmatrix},
\end{align*}
 in block matrix form with respect to the decomposition of $\cV$ into $\cB$ and $\cV \setminus \cB$. Moreover,
\begin{align*}
 \dim \ker \big( - \Delta_{{\rm K}, \Gamma, q, \cB} \big) = \dim\ker S_\cB^* = b.
\end{align*}
\end{proposition}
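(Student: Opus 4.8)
The plan is to derive all three assertions directly from the boundary triple of Proposition~\ref{prop:graphBTB}, the abstract description of the Krein--von Neumann extension in Proposition~\ref{prop:KreinBT}, and the computation of the Weyl function at $0$ recorded in the remark following Proposition~\ref{prop:graphBTB}. First I would recall from Proposition~\ref{prop:graphBTB} that $\{\C^b, \Gamma_0, \Gamma_1\}$, with $\Gamma_0 f = (f(v_1), \dots, f(v_b))^\top$ and $\Gamma_1 f = (-\partial_\nu f(v_1), \dots, -\partial_\nu f(v_b))^\top$, is a boundary triple for $S_\cB^*$, that the distinguished extension $A = S_\cB^* \upharpoonright \ker \Gamma_0$ coincides with the Friedrichs extension $S_{\cB, \rm F}$, and that, by~\eqref{eq:muB}, $0 \in \rho(A)$. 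Hence Proposition~\ref{prop:KreinBT} applies and yields
\begin{align*}
 -\Delta_{{\rm K}, \Gamma, q, \cB} = S_{\cB, \rm K} = S_\cB^* \upharpoonright \big\{ f \in \dom S_\cB^* : \Gamma_1 f = M_\cB(0) \Gamma_0 f \big\},
\end{align*}
where $M_\cB$ denotes the Weyl function associated with this triple.

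Since this exhibits $-\Delta_{{\rm K}, \Gamma, q, \cB}$ as a restriction of $S_\cB^*$, it acts as $-f_e'' + q_e f_e$ on every edge, and every element of its domain lies in $\dom S_\cB^* = \{ f \in \widetilde H^2(\Gamma) \cap H^1(\Gamma) : \partial_\nu f(v) = 0 \text{ for all } v \in \cV \setminus \cB \}$, so the standard (continuity--Kirchhoff) conditions automatically hold at the vertices outside $\cB$. It then remains only to unwind the abstract condition $\Gamma_1 f = M_\cB(0) \Gamma_0 f$, which constrains the traces at the vertices of $\cB$: substituting the explicit forms of $\Gamma_0$ and $\Gamma_1$ and the identity $M_\cB(0) = -\Lambda_{q, \cB}$ from Proposition~\ref{prop:graphBTB}, together with the Schur complement representation $\Lambda_{q, \cB} = \widehat D - B^\top \widehat L^{-1} B$ of the block-decomposed matrix $\Lambda_q$ from the remark following Proposition~\ref{prop:graphBTB}, and cancelling the overall sign, produces exactly the stated vertex condition
\begin{align*}
 \begin{pmatrix} \partial_\nu f(v_1) \\ \vdots \\ \partial_\nu f(v_b) \end{pmatrix} = \big( \widehat D - B^\top \widehat L^{-1} B \big) \begin{pmatrix} f(v_1) \\ \vdots \\ f(v_b) \end{pmatrix}.
\end{align*}

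For the final statement I would invoke the defect numbers $n_- = n_+ = b$ of $S_\cB$ from Proposition~\ref{prop:graphBTB}, together with the general facts recalled in the preliminaries of Section~\ref{sec:abstractKrein} that $\ker S_{\rm K} = \ker S^*$ and that $\dim \ker S_{\rm K}$ equals the common defect number. Applied to $S_\cB$, this gives $\dim \ker(-\Delta_{{\rm K}, \Gamma, q, \cB}) = \dim \ker S_\cB^* = b$, as claimed.

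I do not anticipate any genuine obstacle: the argument is an assembly of results already in place. The only point requiring some care is the linear algebra that identifies $M_\cB(0)$ with minus the Schur complement of $\Lambda_q$ (equivalently, of the weighted discrete Laplacian $L$ when $q \equiv 0$) and tracks the signs through $M_\cB(0) = -\Lambda_{q, \cB}$, so that the vertex condition comes out with the correct orientation; this, however, is precisely the elementary content of the remark following Proposition~\ref{prop:graphBTB}.
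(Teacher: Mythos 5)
Your proposal is correct and follows exactly the route the paper intends: the paper presents Proposition~\ref{prop:KreinB} as immediate from Proposition~\ref{prop:graphBTB} combined with the abstract characterisation $\Gamma_1 f = M_\cB(0)\Gamma_0 f$ of Proposition~\ref{prop:KreinBT}, the Schur-complement formula for $M_\cB(0)$ from the preceding remark, and the identity $\dim\ker S_{\cB,\rm K} = \dim\ker S_\cB^* = n_\pm = b$. You also correctly read the displayed block decomposition as that of the full matrix $\Lambda_q$ (the statement's ``$\Lambda_{q,\cB}$'' there is a typo), with $\Lambda_{q,\cB} = \widehat D - B^\top \widehat L^{-1} B$ its Schur complement, which is what makes the signs in the vertex condition come out right.
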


Next, as an application of the abstract Theorem \ref{thm:minMaxKrein}, we obtain the following variational characterisation for the eigenvalues of $- \Delta_{{\rm K}, \Gamma, q, \cB}$.

\begin{theorem}\label{thm:minMaxKreinB}
If Hypothesis \ref{hyp} is satisfied and $\cB \subset \cV$ is nonempty, then the spectrum of $- \Delta_{{\rm K}, \Gamma, q, \cB}$ is purely discrete, and the positive eigenvalues 
\begin{align*}
 \lambda_1^+ \big( - \Delta_{{\rm K}, \Gamma, q, \cB} \big) \leq \lambda_2^+ \big( - \Delta_{{\rm K}, \Gamma, q, \cB} \big) \leq \dots
\end{align*}
of $- \Delta_{{\rm K}, \Gamma, q, \cB}$, counted with multiplicities, satisfy
\begin{align*}
 \lambda_j^+ \big( - \Delta_{{\rm K}, \Gamma, q, \cB} \big) = \min_{\substack{F \subset \dom S_\cB \\ \dim F = j}} \max_{\substack{f \in F \\ f \neq 0}} \frac{\int_\Gamma \left|- f'' + q f\right|^2 \dd x}{\int_\Gamma |f'|^2 \dd x + \int_\Gamma q |f|^2 \dd x}
\end{align*}
for all $j \in \N$.
\end{theorem}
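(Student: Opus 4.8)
The plan is to derive Theorem~\ref{thm:minMaxKreinB} directly from the abstract min-max principle of Theorem~\ref{thm:minMaxKrein}, applied to the symmetric operator $S_\cB$ from~\eqref{eq:SB} in place of $S$. As noted in the discussion preceding the statement, $S_\cB$ is closed, symmetric and densely defined and satisfies the positive lower bound~\eqref{eq:muB}, so Hypothesis~\ref{hyp:abs} holds for $S_\cB$. Hence the only hypothesis of Theorem~\ref{thm:minMaxKrein} that needs checking is that the embedding $\iota : \cH_{S_\cB} \to L^2(\Gamma)$ is compact, where $\cH_{S_\cB} = \dom S_\cB$ carries the graph norm $\|f\|_{S_\cB} = \|S_\cB f\|$.

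For the compactness I would argue as follows. By~\eqref{eq:alsoDoch} (valid for $S_\cB$ once Hypothesis~\ref{hyp:abs} is known), the $L^2(\Gamma)$-norm is dominated by $\|\cdot\|_{S_\cB}$. Since $(S_\cB f)_e = -f_e'' + q_e f_e$ with $q$ bounded (Hypothesis~\ref{hyp}), on each edge $e$ one has $\|f_e''\|_{L^2(0,\ell(e))} \le \|S_\cB f\| + \|q\|_{L^\infty(\Gamma)}\|f\|$, which together with the previous bound gives $\|f_e''\|_{L^2(0,\ell(e))} \lesssim \|f\|_{S_\cB}$; an elementary interpolation inequality on the bounded interval $(0,\ell(e))$ then also bounds $\|f_e'\|_{L^2(0,\ell(e))}$ by a constant times $\|f\|_{S_\cB}$. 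Thus $\cH_{S_\cB}$ embeds boundedly into $\widetilde H^2(\Gamma) = \bigoplus_{e\in\cE} H^2(0,\ell(e))$, which in turn embeds compactly into $L^2(\Gamma)$ by the Rellich--Kondrachov theorem on each interval; hence $\iota$ is compact.

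With this in hand, Theorem~\ref{thm:minMaxKrein} applied to $S_\cB$ shows that $\sigma(-\Delta_{{\rm K},\Gamma,q,\cB}) = \sigma(S_{\cB,\rm K})$ is nonnegative and consists of isolated eigenvalues, the positive ones of finite multiplicity, and, together with $\dim\ker S_{\cB,\rm K} = b < \infty$ from Proposition~\ref{prop:KreinB}, is purely discrete; moreover
\begin{align*}
 \lambda_j^+\big(-\Delta_{{\rm K},\Gamma,q,\cB}\big) = \min_{\substack{F\subset\dom S_\cB\\ \dim F = j}}\ \max_{\substack{f\in F\\ f\neq 0}}\ \frac{\|S_\cB f\|^2}{(S_\cB f,f)}, \qquad j\in\N.
\end{align*}

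It then remains only to rewrite the Rayleigh quotient. The numerator is $\|S_\cB f\|^2 = \int_\Gamma |-f'' + qf|^2\,\dd x$ by definition, and integrating by parts edge by edge and collecting the boundary contributions at each vertex gives $(S_\cB f, f) = \int_\Gamma |f'|^2\,\dd x + \int_\Gamma q|f|^2\,\dd x - \sum_{v\in\cV}\partial_\nu f(v)\,\overline{f(v)}$; for $f \in \dom S_\cB$ the last sum vanishes because $\partial_\nu f(v) = 0$ at every $v \in \cV$ (and $f(v)$ is well defined by continuity of $f \in H^1(\Gamma)$). Substituting these two identities into the formula above yields exactly the claimed variational characterisation. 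The only mildly technical point in the whole argument is the compactness of $\iota$; the rest is a direct specialisation of the abstract theorem together with one integration by parts, which is why, as the paper notes, the proof may be left largely to the reader.
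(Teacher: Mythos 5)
Your proposal is correct and follows exactly the route the paper intends: the paper states this theorem as a direct specialisation of the abstract Theorem~\ref{thm:minMaxKrein} to $S_\cB$ and leaves the details to the reader, and you have supplied precisely those details (verification of Hypothesis~\ref{hyp:abs} via~\eqref{eq:muB}, compactness of the embedding through the bounded embedding of $\cH_{S_\cB}$ into $\widetilde H^2(\Gamma)$, finiteness of $\dim\ker S_{\cB,\rm K}=b$, and the edge-by-edge integration by parts in which the vertex terms vanish because $\partial_\nu f(v)=0$ for all $v\in\cV$). No gaps.
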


Analogously to Theorem \ref{thm:resDiffGraph}, one may express the resolvent differences of $- \Delta_{{\rm K}, \Gamma, q, \cB}$ with the Friedrichs extension of $S_\cB$ and the perturbed standard Laplacian. In particular, one gets the following.

\begin{theorem}\label{thm:resDiffB}
Assume that Hypothesis \ref{hyp} is satisfied and that $\cB \subset \cV$ is nonempty. Then
\begin{align*}
 \dim \ran \Big[ \big(- \Delta_{{\rm K}, \Gamma, q, \cB} - \lambda \big)^{-1} & - \big(- \Delta_{{\rm st}, \Gamma, q} - \lambda \big)^{-1} \Big] \\
 & = \dim \ran \Lambda_{q, \cB} = \begin{cases} b - 1 & \text{if}~q = 0~\text{identically}, \\ b, & \text{else}, \end{cases}
\end{align*}
where $b = |\cB|$.
\end{theorem}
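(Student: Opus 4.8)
The plan is to mirror the proof of Theorem \ref{thm:resDiffGraph}, now using the boundary triple $\{\C^b, \Gamma_0, \Gamma_1\}$ for $S_\cB^*$ from Proposition \ref{prop:graphBTB} together with the general Corollary \ref{cor:KreinB}(ii). First I would record that, with $A = S_{\cB,\rm F}$ and $B = -\Delta_{{\rm st},\Gamma,q}$, one has $0 \in \rho(A)$ and $M_\cB(0) = -\Lambda_{q,\cB}$; hence Corollary \ref{cor:KreinB}(ii) gives, for all $\lambda$ in the common resolvent set,
\begin{align*}
 \dim \ran \Big[ \big(- \Delta_{{\rm K}, \Gamma, q, \cB} - \lambda \big)^{-1} - \big(- \Delta_{{\rm st}, \Gamma, q} - \lambda \big)^{-1} \Big] = \dim \ran M_\cB(0) = \dim \ran \Lambda_{q,\cB}.
\end{align*}
So the whole statement reduces to computing $\dim \ran \Lambda_{q,\cB}$, equivalently $\dim \ker \Lambda_{q,\cB}$, since $\Lambda_{q,\cB}$ is a $b \times b$ matrix (it is symmetric, being $-M_\cB(0)$ with $M_\cB(0)$ self-adjoint).

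For the kernel computation I would argue exactly as in the proof of Theorem \ref{thm:resDiffGraph}. Let $\phi \in \ker \Lambda_{q,\cB}$. By the defining relation of $\Lambda_{q,\cB}$ there is a unique $f_* \in \ker S_\cB^*$ with $\Gamma_0 f_* = \phi$ and $\Gamma_1 f_* = 0$; the latter means $\partial_\nu f_*(v) = 0$ at every $v \in \cB$ as well (it is already zero at every $v \in \cV \setminus \cB$ by membership in $\dom S_\cB^*$), so $f_* \in \ker(-\Delta_{{\rm st},\Gamma,q})$. The standard variational identity then gives $0 = \int_\Gamma |f_*'|^2\,\dd x + \int_\Gamma q|f_*|^2\,\dd x$; both terms vanish, so $f_*$ is constant on each edge, hence (by continuity and connectedness of $\Gamma$) constant on all of $\Gamma$. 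If $q \not\equiv 0$, then $\int_\Gamma q|f_*|^2\,\dd x = 0$ forces $f_* \equiv 0$, so $\phi = \Gamma_0 f_* = 0$ and $\ker \Lambda_{q,\cB} = \{0\}$, giving $\dim \ran \Lambda_{q,\cB} = b$. If $q \equiv 0$, the constant functions are exactly the kernel of $-\Delta_{{\rm st},\Gamma}$, and for a nonzero constant $c$ the corresponding $\phi = \Gamma_0 f_* = (c,\dots,c)^\top$ is nonzero; conversely any $\phi \in \ker \Lambda_{0,\cB}$ arises from a constant, so $\ker \Lambda_{0,\cB}$ is one-dimensional (the span of the all-ones vector in $\C^b$) and $\dim \ran \Lambda_{0,\cB} = b - 1$.

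The one point needing a little care — and the only place where this differs from Theorem \ref{thm:resDiffGraph} — is the claim that $f_* \in \ker S_\cB^*$ with $\Gamma_1 f_* = 0$ actually lies in $\ker(-\Delta_{{\rm st},\Gamma,q})$, i.e.\ that the standard (continuity-Kirchhoff) conditions hold at \emph{every} vertex and not merely at those in $\cV \setminus \cB$. This is immediate from the definition of $\dom S_\cB^*$ (which already imposes $\partial_\nu f = 0$ on $\cV \setminus \cB$) combined with $\Gamma_1 f_* = 0$ (which supplies $\partial_\nu f_* = 0$ on $\cB$) and continuity throughout (inherited from $\widetilde H^2(\Gamma) \cap H^1(\Gamma)$); so it is not really an obstacle, just a bookkeeping step. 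Everything else is a verbatim repetition of the potential-free/nontrivial-potential dichotomy already carried out for $\Lambda_q$. I would close by noting that the existence of $\lambda$ in the required common resolvent set is guaranteed since all three operators have purely discrete spectrum.
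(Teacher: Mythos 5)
Your proposal is correct and follows exactly the route the paper intends: the paper leaves this proof to the reader as ``analogous to Theorem~\ref{thm:resDiffGraph}'', and your argument is precisely that analogue, combining Corollary~\ref{cor:KreinB}\,(ii) with the boundary triple of Proposition~\ref{prop:graphBTB} and then computing $\ker \Lambda_{q,\cB}$ via the harmonic-extension/variational argument used for $\Lambda_q$. The bookkeeping point you flag (that $\Gamma_1 f_* = 0$ together with the Kirchhoff conditions already built into $\dom S_\cB^*$ places $f_*$ in $\ker(-\Delta_{{\rm st},\Gamma,q})$) is handled correctly, as is the identification of $\ker\Lambda_{0,\cB}$ with the span of the all-ones vector in $\C^b$.
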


In particular, in the potential-free case, if $b = |\cB| = 1$, then $- \Delta_{{\rm K}, \Gamma, \cB}$ equals the standard Laplacian. As a consequence of either Theorem \ref{thm:resDiffB} or Theorem \ref{thm:minMaxKreinB}, we get, analogously to \eqref{eq:standardKreinV},
\begin{align*}
 \lambda_{j + 1} \big(- \Delta_{{\rm st}, \Gamma} \big) \leq \lambda_j^+ \big(- \Delta_{{\rm K}, \Gamma, \cB} \big) = \lambda_{j + b} \big(- \Delta_{{\rm K}, \Gamma, \cB} \big), \quad j \in \N,
\end{align*}
in the case without potential. 

The surgery principles of Section \ref{sec:surgery} remain valid for the (positive) eigenvalues of the operator $- \Delta_{{\rm K}, \Gamma, q, \cB}$, provided that all vertices involved in the surgery operations belong to $\cB$; we leave it to the reader to formulate and prove the corresponding results. Instead we formulate a related result which deals with the transition between standard and ``Krein vertex conditions''.

\begin{theorem}\label{thm:addstandardB}
Let Hypothesis \ref{hyp} be satisfied. Moreover, let $\widetilde \cB \subset \cB \subset \cV$ be sets of size $b = |\cB|$ and $\widetilde b = |\widetilde \cB|$, respectively, and let $k := b - \widetilde b$. Then for
\begin{align*}
 & \lambda_j^+ := \lambda_j^+ \big( - \Delta_{{\rm K}, \Gamma, q, \cB} \big), \quad  \widetilde \lambda_j^+ := \lambda_j^+ \big( - \Delta_{{\rm K}, \Gamma, q, \widetilde \cB} \big), \\
 & \lambda_j := \lambda_j \big( - \Delta_{{\rm K}, \Gamma, q, \cB} \big), \quad \,\,\,  \widetilde \lambda_j := \lambda_j \big( - \Delta_{{\rm K}, \Gamma, q, \widetilde \cB} \big),
\end{align*}
the following statements hold:
\begin{enumerate}
 \item the positive eigenvalues satisfy
 \begin{align}\label{eq:inclusion+}
  \widetilde \lambda_j^+ \leq \lambda_j^+ \leq \widetilde \lambda_{j + k}^+ \leq \lambda_{j + k}^+, \quad j \in \N;
 \end{align}
 \item the eigenvalues (counting ground states) satisfy
 \begin{align}\label{eq:inclusion}
  \lambda_j \leq \widetilde \lambda_j \leq \lambda_{j + k} \leq \widetilde \lambda_{j + k}, \quad j \in \N.
 \end{align}
\end{enumerate}
\end{theorem}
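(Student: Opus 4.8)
The plan is to mirror, almost verbatim, the proof of Theorem~\ref{thm:gluingV}, with the pair $S,\widetilde S$ there replaced by $S_\cB$ and $S_{\widetilde\cB}$, and to invoke the abstract interlacing result Theorem~\ref{thm:interlacingAbstract}. First I would record the basic inclusion: since $\widetilde\cB\subset\cB$, imposing Dirichlet conditions at the vertices of $\cB$ is more restrictive than at those of $\widetilde\cB$, so $\dom S_\cB\subset\dom S_{\widetilde\cB}$, the two operators act identically on the smaller domain, and hence $S_\cB\subset S_{\widetilde\cB}$. Both operators are closed, densely defined and symmetric, and the estimate \eqref{eq:muB} applied with $\widetilde\cB$ furnishes the positive lower bound \eqref{eq:mu} for $S_{\widetilde\cB}$. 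Next I would check that the embedding $\cH_{S_{\widetilde\cB}}\to L^2(\Gamma)$ is compact: from $f''_e=q_ef_e-(S_{\widetilde\cB}f)_e$ and the boundedness of $q$ one gets $\|f''\|_{L^2(\Gamma)}\le\|q\|_\infty\|f\|+\|S_{\widetilde\cB}f\|$, which together with the analogue of \eqref{eq:alsoDoch} shows that $\cH_{S_{\widetilde\cB}}$ embeds continuously into $\widetilde H^2(\Gamma)$; the Rellich--Kondrachov theorem on each edge then gives compactness (equivalently, one may simply quote that the Friedrichs extension $S_{\widetilde\cB,\mathrm F}$ has compact resolvent).

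The key step is to identify the co-dimension of $\dom S_\cB$ in $\dom S_{\widetilde\cB}$ as exactly $k=b-\widetilde b$. The evaluation map $\dom S_{\widetilde\cB}\to\C^{k}$, $f\mapsto\big(f(v)\big)_{v\in\cB\setminus\widetilde\cB}$, has kernel precisely $\dom S_\cB$, so it suffices to prove surjectivity. Arguing inductively it is enough to treat $k=1$: for a single vertex $w\in\cB\setminus\widetilde\cB$ and any $f,g\in\dom S_{\widetilde\cB}$, the combination $g(w)f-f(w)g$ vanishes at $w$ and thus lies in $\dom S_\cB$, exactly as in the proof of Theorem~\ref{thm:deg2V}; since $\dom S_{\widetilde\cB}$ clearly contains functions not vanishing at $w$ (e.g.\ a suitable smooth bump supported near $w$, which satisfies all Kirchhoff conditions and all $\widetilde\cB$-Dirichlet conditions trivially), the co-dimension equals $1$. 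With this, Theorem~\ref{thm:interlacingAbstract}, applied with its ``$S$'' being $S_\cB$ and its ``$\widetilde S$'' being $S_{\widetilde\cB}$, yields directly the chain \eqref{eq:inclusion+}, because $(S_\cB)_{\rm K}=-\Delta_{{\rm K},\Gamma,q,\cB}$ and $(S_{\widetilde\cB})_{\rm K}=-\Delta_{{\rm K},\Gamma,q,\widetilde\cB}$.

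Finally, for \eqref{eq:inclusion} I would shift indices using the kernel dimensions from Proposition~\ref{prop:KreinB}: $\dim\ker(-\Delta_{{\rm K},\Gamma,q,\cB})=b$ and $\dim\ker(-\Delta_{{\rm K},\Gamma,q,\widetilde\cB})=\widetilde b=b-k$, so that $\lambda_j=0$ for $j\le b$ and $\lambda_{j+b}=\lambda_j^+$, with the analogous identities $\widetilde\lambda_j=0$ for $j\le b-k$ and $\widetilde\lambda_{j+b-k}=\widetilde\lambda_j^+$. Substituting these into \eqref{eq:inclusion+} and splitting into the three cases $j\le b-k$, $b-k<j\le b$, and $j>b$ gives \eqref{eq:inclusion}, in exactly the same way \eqref{eq:gluingV} was deduced from \eqref{eq:gluingV+} in the proof of Theorem~\ref{thm:gluingV}. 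I expect the only genuinely delicate point to be the co-dimension count, i.e.\ producing enough functions in $\dom S_{\widetilde\cB}$ with prescribed values at the vertices of $\cB\setminus\widetilde\cB$; everything else is a routine application of the abstract machinery already developed in Section~\ref{sec:abstractKrein}.
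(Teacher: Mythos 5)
Your proposal is correct and follows essentially the same route as the paper's own proof: the operator inclusion $S_\cB \subset S_{\widetilde\cB}$, the co-dimension count $k=b-\widetilde b$, the abstract interlacing result of Theorem~\ref{thm:interlacingAbstract} for~\eqref{eq:inclusion+}, and the kernel-dimension index shift via Proposition~\ref{prop:KreinB} for~\eqref{eq:inclusion}. The only difference is that you spell out the co-dimension argument (evaluation map plus bump functions) which the paper dismisses as ``easy to see''; your verification is sound.
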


\begin{proof}
If we denote by $S$ and $\widetilde S$ the symmetric operators defined in \eqref{eq:SB} for the vertex subsets $\cB$ and $\widetilde B$ respectively, then $\widetilde \cB \subset \cB$ implies the operator inclusion $S \subset \widetilde S$. Moreover, it is easy to see that $\dom S$ has co-dimension $k = b - \widetilde b$ in $\dom \widetilde S$. Therefore \eqref{eq:inclusion+} follows directly from Theorem \ref{thm:interlacingAbstract}. Using the fact that the perturbed Krein Laplacians for $\cB$ and $\widetilde \cB$ have respectively $b$ and $\widetilde b$ linearly independent functions in their kernels, \eqref{eq:inclusion} follows from \eqref{eq:inclusion+}.
\end{proof}

As a simple consequence, for any nonempty $\cB \subset \cV$, we have
\begin{align*}
\lambda_j^+ \big(- \Delta_{{\rm st}, \Gamma, q} \big) \leq \lambda_j^+ \big(- \Delta_{{\rm K}, \Gamma, q, \cB} \big) \leq \lambda_j^+ \big(- \Delta_{{\rm K}, \Gamma, q} \big)
\end{align*}
as well as
\begin{align*}
 \lambda_j \big(- \Delta_{{\rm K}, \Gamma, q} \big) \leq \lambda_j \big(- \Delta_{{\rm K}, \Gamma, q, \cB} \big) \leq \lambda_j \big(- \Delta_{{\rm st}, \Gamma, q} \big),
\end{align*}
for all $j \in \N$.

\begin{ack}
The authors are grateful to Fritz Gesztesy for his comments on the literature. J.R.\ acknowledges financial support by grant no.\ 2018-04560 of the Swedish Research Council (VR).
\end{ack}

\end{document}